\crefname{hypothesis}{Hypothesis}{Hypotheses}
\title{Inertial Proximal Block Coordinate Method for a Class of Nonsmooth Sum-of-Ratios Optimization Problems\thanks{Submitted to the editors DATE.
\funding{The first author was partially supported by the Austrian Science Fund (FWF), project number W1260-N35. The second author was partially supported by the Australian Research Council (ARC), project number DP190100555 and by a public grant as part of the Investissement d'avenir project, reference ANR-11-LABX-0056-LMH, LabEx LMH. The third author was partially supported by the Australian Research Council (ARC), project number DP190100555.}}}
\author{
Radu Ioan Bo\c{t}\thanks{Faculty of Mathematics, University of Vienna, A-1090 Vienna, Austria (\texttt{radu.bot@univie.ac.at}).}
\and
Minh N. Dao\thanks{School of Science, RMIT University, Melbourne 3000, Australia (\texttt{minh.dao@rmit.edu.au}).}
\and
Guoyin Li\thanks{Department of Applied Mathematics, University of New South Wales, Sydney 2052, Australia (\texttt{g.li@unsw.edu.au}).}
}
\newcommand{\RP}{\ensuremath{\mathbb{R}_+}}
\newcommand{\RPP}{\ensuremath{\mathbb{R}_{++}}}
\newcommand{\st}{\ensuremath{\stackrel}}
\newcommand{\argmax}{\ensuremath{\operatorname*{argmax}}}
\newcommand{\argmin}{\ensuremath{\operatorname*{argmin}}}
\newcommand{\dom}{\ensuremath{\operatorname{dom}}}
\newcommand{\prox}{\ensuremath{\operatorname{Prox}}}
\newcommand{\dist}{\ensuremath{\operatorname{dist}}}
\newcommand{\supp}{\ensuremath{\operatorname{supp}}}
\newcommand{\scal}[2]{\left\langle {#1},{#2} \right\rangle}
\newcommand{\x}{\mathbf{x}}
\newcommand{\y}{\mathbf{y}}
\newcommand{\z}{\mathbf{z}}
\newcommand{\ba}{\mathbf{a}}
\newcommand{\bu}{\mathbf{u}}
\newcommand{\bv}{\mathbf{v}}
\newcommand{\bw}{\mathbf{w}}
\newcommand{\blambda}{\boldsymbol{\lambda}}
\newcommand{\bmu}{\boldsymbol{\mu}}
\newcounter{step}
\newcommand\step[1]{%
	\refstepcounter{step}	
	\vskip 0.25\baselineskip
	\ifx\hfuzz#1\hfuzz
		\item[~\(\triangleright\)~\textbf{Step~\arabic{step}.}]
	\else
		\item[~\(\triangleright\)~\textbf{Step~\arabic{step}}] (\texttt{#1})\textbf{.}%
	\fi
}
\begin{document}

\maketitle

\begin{abstract}
In this paper, we consider a class of nonsmooth sum-of-ratios fractional optimization problems with block structure. This model class is ubiquitous and encompasses several important nonsmooth optimization problems in the literature. We first propose an inertial proximal block coordinate method for solving this class of problems by exploiting the underlying structure. The global convergence of our method is guaranteed under the Kurdyka--{\L}ojasiewicz (KL) property and some mild assumptions. We then identify the explicit exponents of the KL property for three important structured fractional optimization problems. In particular, for the sparse generalized eigenvalue problem with either cardinality regularization or sparsity constraint, we show that the KL exponents are $1/2$, and so, the proposed method exhibits linear convergence rate. Finally, we illustrate our theoretical results with both analytic and simulated numerical examples.
\end{abstract}

\begin{keywords}
fractional program, 
Kurdyka--{\L}ojasiewicz property, 
linear convergence, 
proximal block coordinate method,
sparsity, 
sum-of-ratios
\end{keywords}

\begin{AMS}
90C26,	
90C32,	
49M27,	
65K05	
\end{AMS}

\section{Introduction}

We consider the following nonsmooth and nonconcave fractional maximization problem
\begin{equation}\label{eq:prob}
\max_{\x =(x_1,\dots,x_m)\in S:=S_1 \times \dots \times S_m} F(\x) :=h(x_1,\dots,x_m)+\sum_{i=1}^m \frac{f_{i}(x_i)}{g_{i}(x_i)},
\tag{\ensuremath{\mathcal{P}}}
\end{equation}
where, for each $i\in \{1,\dots,m\}$, $\mathcal{H}_i$ is a finite-dimensional real Hilbert space, $S_i$ is a nonempty closed subset of $\mathcal{H}_i$, $h\colon \mathcal{H}_1 \times \dots \times \mathcal{H}_m \to \left[-\infty,+\infty\right]$ is a (possibly) nonsmooth and nonconcave function, and $f_i,g_i\colon \mathcal{H}_i\to \mathbb{R}$ are locally Lipschitz functions such that, for all $x_i\in S_i$,
\begin{equation}\label{eq:nonnegative}
f_{i}(x_i) \geq 0 \quad\text{and}\quad g_{i}(x_i) > 0.
\end{equation}
The model problem~\eqref{eq:prob} covers various important optimization problems arising in diverse areas, such as the energy efficiency maximization problem and the sparse generalized eigenvalue problem \cite{TWLZ18,ZD15}. On the other hand, it belongs to the class of so-called sum-of-ratios optimization problems which are known as the most difficult problems in the fractional programming literature. Obviously, there is an alternative formulation for \eqref{eq:prob} which is obtained by replacing the maximum with minimum. Although these two formulations are in general independent (due to the nonnegativity assumption \eqref{eq:nonnegative}), the corresponding algorithmic development can be easily modified to suit the other formulation. Therefore, in this paper, we will focus on the maximum formulation. Below, we present a few explicit motivating examples for the model problem~\eqref{eq:prob}.

\begin{example}[penalization  formulation for general sum-of-ratios optimization problem] \label{ex:General_sum_of_ratio}
Consider the classical sum-of-ratios optimization problem
\begin{equation}\label{eq:classical}
\max_{z\in C} \sum_{i=1}^m \frac{f_i(z)}{g_i(z)},  
\end{equation}
where $C$ is a bounded polyhedron in $\mathbb{R}^d$ and, for each $i\in \{1,\dots,m\}$, $f_i$ and $g_i$ are continuously differentiable functions on $\mathbb{R}^d$ such that, for all $z \in C$, $f_i(z) \geq 0$ and $g_i(z) >0$. This, for example, covers the energy efficiency maximization problem discussed in \cite{ZD15}, where $C =\{z \in \mathbb{R}_+^{d}: \forall i\in \{1,\dots,m\},\ z_i^{\min} \leq z_i \leq z_i^{\max} \text{~and~} v^\top z \leq r\}$ with $0 < z_i^{\min} \leq z_i^{\max}$, $v \in \mathbb{R}^d_+$, $r>0$ and, for each $i\in \{1,\dots,m\}$, $f_i(z)=\log(1+u_i^\top z+r_i)$ with $u_i \in \mathbb{R}_+^{d}\smallsetminus\{0\}$ and $r_i \geq 0$, and $g_i$ is an affine function with positive values on $C$. Note that \eqref{eq:classical} 
can be equivalently rewritten as
\begin{equation*}
\max_{x_1,\dots, x_m\in C} \sum_{i=1}^m \frac{f_i(x_i)}{g_i(x_i)} \quad\text{s.t.}\quad  x_1 =\dots =x_m.    
\end{equation*}
Therefore, a plausible alternative optimization formulation for \eqref{eq:classical} becomes
\begin{equation}\label{eq:classical0}
\max_{x_1,\dots, x_m\in C}  -\gamma \sum_{i=2}^{m}\|x_1-x_i\|^2+\sum_{i=1}^m \frac{f_i(x_i)}{g_i(x_i)},     
\end{equation}
where $\gamma>0$ is a parameter. Direct verification shows that this is a particular case of our model problem~\eqref{eq:prob} with $h(x_1,\dots, x_m) =-\gamma \,\sum_{i=2}^{m}\|x_1-x_i\|^2$, and $S =S_1\times \dots\times S_m$ with $S_i =C$, $i =1,\dots,m$.
\end{example}

\begin{example}[sparse generalized eigenvalue problem]
\label{ex:eigen}
The generalized eigenvalue problem, which searches for the most dominant eigenvalues (or principal eigenvalues) and corresponding eigenvector, can be written as an optimization problem $\max_{\x \in \mathbb{R}^d} \{\frac{\x^\top A\x}{\x^\top B\x}: \|\x\|=1\}$. In numerical analysis, one seeks an eigenvector with least number of nonzero entries, so that the information can be easily stored, explained and identified. This leads to a sparse generalized eigenvalue problem which can be formulated as
\begin{equation*}
\max_{\x \in \mathbb{R}^d} \frac{\x^\top A\x}{\x^\top B\x} -\lambda \phi(\x)
\quad\text{s.t.}\quad \|\x\|=1.
\end{equation*}
Here, $A,B$ are symmetric matrices with $A$ positive semidefinite and $B$ positive definite, and $\phi$ is a regularization function which induces sparsity of the solution. Typical choices of $\phi$ include the $\ell_0$ regularization (or cardinality) function given by $\|\x\|_0 =\{\text{number of~} i: x_i \neq 0\}$,  the $\ell_1$-norm given by $\|\x\|_1 =\sum_{i=1}^d |x_i|$, and the indicator function of the sparsity set $C_r =\{\x\in \mathbb{R}^d: \|\x\|_0 \leq r\}$ with $r>0$.  For example, in a recent study \cite{TWLZ18}, the authors examined the sparse generalized eigenvalue problem with $\phi(\x) =\delta_{C_r}(\x)$, where they proposed a truncated Rayleigh flow method (TRFM) and demonstrated the efficiency of this model problem on classification, correlation analysis and regression. Direct verification shows that the sparse generalized eigenvalue problem is a particular case of our model problem~\eqref{eq:prob} with $m =1$, $h(\x) =-\lambda \phi(\x)$, $f_1(\x) =\x^\top A\x$, $g_1(\x) =\x^\top B\x$, and $S =\{\x\in \mathbb{R}^d: \|\x\|=1\}$.
\end{example}

\begin{example}[maximizing the sum of a quadratic function and the Rayleigh quotient over the unit
sphere]
\label{ex:Rayleigh}
We consider the problem of maximizing the sum of a quadratic function and Rayleigh quotient over the unit
sphere
\begin{equation*}
\max_{\x \in \mathbb{R}^d} \x^\top W\x+ \frac{\x^\top A\x}{\x^\top B\x} 
\quad\text{s.t.}\quad \|\x\|=1,
\end{equation*}
where $A,B$ are positive definite matrices and $W$ is a symmetric matrix.
This problem arises in sparse Fisher discriminant analysis, in the context of which it is usually solved iteratively \cite{Zha13}. In particular, $\x$ is the desired discriminating vector in cluster analysis, the term $\frac{\x^\top A\x}{\x^\top B\x}$ is known as the Rayleigh quotient (or Fisher information in information science), and the quadratic term $\x^\top W\x$ serves as a local approximation for the sparse penalty term. Direct verification shows that this is a particular case of our model problem~\eqref{eq:prob} for $m =1$, $h(\x) =\x^\top W\x$, $f_1(\x) =\x^\top A\x$, $g_1(\x) =\x^\top B\x$ and $S =\{\x\in \mathbb{R}^d: \|\x\|=1\}$.
\end{example}

In the case where $m=1$ and $h \equiv 0$, problem~\eqref{eq:prob} is known as the single ratio fractional programming problem $\max_{\x \in S}\frac{f_1(\x)}{g_1(\x)}$. A classical approach for solving the latter problem is Dinkelbach's method and its variants (see \cite{CFS85,Din67}). In this approach, one typically constructs an iterative scheme which requires finding an optimal solution $\x_{n+1}$ of the optimization problem
\begin{equation}\label{eq:Dinkelbach}
\max_{\x \in S} \{f_1(\x) -\theta_n g_1(\x)\}
\end{equation}
in each iteration $n$, while $\theta_n$ is updated by $\theta_{n+1} :=\frac{f_1(\x_{n+1})}{g_1(\x_{n+1})}$. For details of this approach, we refer the readers to  \cite{CFS85,Din67,Iba83,Sch75}. However, solving in each iteration an optimization problem of type \eqref{eq:Dinkelbach} may be as expensive and difficult as solving the original problem in general. Recently, proximal type methods based on Dinkelbach's approach have been proposed to tackle single ratio fractional programs \cite{BC17,BDL20,LSZZ22}, where each subproblem is much easier to solve and sometimes has closed form solutions.

Unfortunately, in the case of sum-of-ratios fractional programs, that is either $m>1$ or $h \not\equiv 0$, Dinkelbach's approach cannot be directly applied anymore. One naive approach is to convert the sum-of-ratios into single ratio's cases and to apply Dinkelbach's method. This approach increases the complexity of the function dramatically and leads to numerical methods with poor performance.  For example, through this approach, a sum of three linear fractional functions becomes a fractional function whose numerator and denominator are  degree 3 nonconvex polynomials, and so, the nice linearity structure is completely lost. Some important steps towards solving sum-of-ratios fractional programs are mainly limited to sum-of-ratios of linear or quadratic fractional programs, and rely on  integer programming techniques such as branch and bound and convex relaxation methods, see for example \cite{Ben04,Loc15,Zha13}.
These approaches, although highly appealing, are much less scalable than the proximal type methods, and cannot directly deal with settings in which nonsmooth functions are involved.

Despite this important progress, it is still no clear whether one can develop proximal methods for solving nonsmooth and nonconcave sum-of-ratios fractional programs \eqref{eq:prob} in the line of \cite{BC17,BDL20} for single ratio cases. This forms the basic motivation of our work. Specifically, the contributions of this paper are as follows:
\begin{enumerate}[label =(\arabic*)]
\item
In Section~\ref{s:main}, we propose an inertial proximal subgradient method for solving the model problem~\eqref{eq:prob}. We then show that the iterative sequence generated by the proposed method is bounded and any of its limit points is a stationary point of problem~\eqref{eq:prob} in a suitable sense. This new method can be interpreted as a proximal block coordinate method of Gauss--Seidel type applied to a related non-fractional reformulated problem.  
We also establish the convergence of the full sequence under the assumption that a suitable merit function satisfies the Kurdyka--{\L}ojasiewicz (KL) property.

\item
In Section~\ref{s:KL}, we analyze several structured sum-of-ratios fractional programs and obtain the explicit KL exponents of the corresponding desingularization functions in the KL property: sum-of-ratios fractional quadratic programs with spherical constraint, generalized eigenvalue problems with cardinality regularization and generalized eigenvalue problems with sparsity constraints. In particular, we establish that, for the last two classes of fractional programs, the KL exponents are $1/2$. As a consequence, we obtain that the proposed numerical method exhibits linear convergence for these two classes of fractional programs.

\item
Finally, we illustrate the proposed method via both analytical and simulated numerical examples in Section~\ref{s:numerical}.
\end{enumerate}

\section{Preliminaries}

In this section, we recall some basic notations and preliminary results which will be used in this paper. We assume throughout that $\mathcal{H}$, $\mathcal{H}_1$, \dots, $\mathcal{H}_m$ are finite-dimensional real Hilbert spaces
with inner product $\scal{\cdot}{\cdot}$ and induced norm $\|\cdot\|$. The product space $\mathcal{H}_1 \times \dots \times \mathcal{H}_m$ is also a real Hilbert space endowed with the inner product given by
    $\langle (x_1,\dots,x_m),(y_1,\dots,y_m) \rangle = \sum_{i=1}^m \langle x_i,y_i \rangle.$
The set of nonnegative integers is denoted by $\mathbb{N}$, the set of real numbers by $\mathbb{R}$, the set of nonnegative real numbers by $\RP$, and the set of the positive real numbers by $\RPP$.

The \emph{indicator function} of a set $C$ is defined by $\delta_C(\x) :=0$ if $\x \in C$, and $\delta_C(\x) :=+\infty$ otherwise. Given an extended-real-valued function $f\colon \mathcal{H}\to \left[-\infty,+\infty\right]$, its \emph{domain} is defined by $\dom f :=\{\x \in \mathcal{H}: f(\x)<+\infty\}$. The function $f$ is \emph{proper} if $\dom f\neq \varnothing$ and it never equals $-\infty$. We say that $f$ is \emph{lower semicontinuous} if, for all $\x\in  \mathcal{H}$, $f(\x)\leq \liminf_{\y\to \x} f(\y)$, and \emph{convex} if its epigraph $\{(\x,\rho)\in \mathcal{H}\times \mathbb{R}: f(\x)\leq \rho\}$ is a convex subset of $\mathcal{H}\times \mathbb{R}$. The function $f$ is said to be \emph{weakly convex (on $\mathcal{H}$)} if there exists $\alpha \geq 0$ such that $f+\frac{\alpha}{2}\|\cdot\|^2$ is a convex function. The smallest constant $\alpha$ such that $f+\frac{\alpha}{2}\|\cdot\|^2$ is convex is called the \emph{modulus} of weak convexity. More generally, $f$ is said to be \emph{weakly convex on $S \subseteq \mathcal{H}$ with modulus $\alpha$} if $f+\delta_S$ is weakly convex with modulus $\alpha$. Weakly convex functions form a broad class of functions which covers quadratic functions, convex functions, differentiable functions whose gradient is Lipschitz continuous, and the composition of a convex and Lipschitz continuous function with a $C^1$-smooth mapping whose Jacobian is Lipschitz continuous (see \cite[Lemma~4.2]{DP19}).

Let $f\colon\mathcal{H} \to \left[-\infty,+\infty\right]$ and $\x\in \mathcal{H}$ with $|f(\x)| <+\infty$. The \emph{Fr\'echet subdifferential} of $f$ at $\x$ is given by
\begin{equation*}
\widehat{\partial} f(\x) :=\left\{\bu\in \mathcal{H}:\; \liminf_{\z\to \x}\frac{f(\z)-f(\x)-\langle \bu,\z-\x\rangle}{\|\z-\x\|}\geq 0\right\},
\end{equation*}
the \emph{limiting subdifferential} of $f$ at $\x$ is given by
\begin{equation*}
\partial_L f(\x) :=\left\{\bu\in \mathcal{H}:\; \exists \x_n\st{f}{\to}\x,\ \bu_n\to \bu\text{~~with~~} \bu_n\in \widehat{\partial} f(\x_n)\right\},
\end{equation*}
and the \emph{horizon subdifferential} of $f$ at $\x$ is given by
\begin{equation*}
\partial^{\infty}f(\x) :=\left\{\bu\in \mathcal{H}:\; \exists \x_n\st{f}{\to}\x,\ \lambda_n\downarrow 0,\ \lambda_n\bu_n\to \bu\text{~~with~~} \bu_n\in \widehat{\partial} f(\x_n)\right\}.
\end{equation*}
Here, the notation $\z\st{f}{\to}\x$ means $\z\to \x$ with $f(\z)\to f(\x)$. It follows from the above definition that the limiting subdifferential has the following \emph{robustness property}
\begin{equation*}
\partial_L f(\x) =\left\{\bu\in \mathcal{H}:\; \exists \x_n \st{f}{\to}\x,\ \bu_n\to \bu \text{~~with~~} \bu_n\in \partial_L f(\x_n)\right\}.
\end{equation*}
The \emph{domain} of $\partial_L f$ is $\dom\partial_L f :=\{\x\in \mathcal{H}: \partial_L f(\x)\neq \varnothing\}$. If $f$ is Lipschitz continuous around $\x$, then $\partial_L f(\x)$ is bounded and $\partial^{\infty} f(\x) =\{0\}$ (see \cite[Corollary~1.81]{Mor06}). If $f$ is strictly differentiable\footnote{A function $f$ is strictly differentiable at $\x$ if there exists $\bu \in \mathcal{H}$ such that $\lim_{\y,\z \to \x}\frac{f(\y)-f(\z)-\langle \bu,\y-\z\rangle}{\|\y-\z\|} =0$. Clearly, if $f$ is continuously differentiable at $\x$, then it is strictly differentiable at $\x$.} at $\x$, then $\widehat{\partial} f$ and $\partial_L f$ reduce to the derivative of $f$, denoted by $\nabla f$ (see \cite[Corollary~1.82]{Mor06}). If $f$ is convex, then both Fr\'echet and limiting subdifferentials at $\x$ reduce to the classical subdifferential in convex analysis (see \cite[Theorem~1.93]{Mor06})
\begin{equation*}
\partial f(\x) :=\left\{\bu\in \mathcal{H}:\; \forall \z\in \mathcal{H},\; \langle \bu,\z-\x\rangle\leq f(\z)-f(\x)\right\}.
\end{equation*}
We say that $f$ is \emph{regular\footnote{This is also referred as \emph{lower regular} in \cite{Mor06,MNY06}.} at $\x\in \mathcal{H}$} if $\widehat{\partial} f(\x) =\partial_L f(\x)$, and that $f$ is \emph{regular on $C\subseteq \mathcal{H}$} if it is regular at any $\x\in C$. For a  proper lower semicontinuous function $f$, it is clear that if $f$ is convex around $\x$ or strictly differentiable at $\x$, then it is regular at $\x$. A nonempty set $S$ in $\mathcal{H}$ is \emph{regular} at $\x\in S$ if $\delta_S$ is regular at $\x$. We say that $S$ is regular if it is regular at all of its points. From the definition, it can be verified that $C$ is regular if $C$ is a closed and convex set or $C$ is a smooth manifold given by $C =\{\x\in \mathcal{H}: g_i(\x)=0,\, i=1,\dots, m\}$, where $g_i$ are smooth functions satisfying the so-called linear independent constraint qualification (that is, $\{\nabla g_i(\x): i=1,\dots,m\}$ are linearly independent for all $\x \in C$).

Next, we collect some subdifferential rules and calculations which will be of use in our analysis and whose proofs are given in Appendix~\ref{s:appendix}.

\begin{lemma}[calculus rules]
\label{l:calrules}
Let $f,g\colon \mathcal{H}\to \left(-\infty,+\infty\right]$ be proper lower semicontinuous functions and let $\x \in \dom f$. Then the following statements hold:
\begin{enumerate}
\item\label{l:calrules_separable}
(Separable sum rule) If $f(\x) =\sum_{i=1}^m f_i (x_i)$ with $\x =(x_1,\dots,x_m)$, then $\partial_L f(\x) =\partial_L f_1(x_1) \times \dots \times \partial_L f_m(x_m)$ and $f$ is regular at $\x$ when each $f_i$ regular at $x_i$.

\item\label{l:calrules_sum}
(Sum rule)
If $\partial^{\infty} f(\x) \cap (-\partial^{\infty} g(\x)) =\{0\}$, then $\partial_L(f+g)(\x) \subseteq \partial_L f(\x) + \partial_L g(\x)$, where the equality holds when both $f$ and $g$ are regular at $\x$, in which case $f+g$ is also regular at $\x$. Moreover, if $g$ is strictly differentiable at $\x$, then $\partial_L (f+g)(\x) =\partial_L f(\x) +\nabla g(\x)$.

\item\label{l:calrules_sign}
(Sign rule)
If $f$ is Lipschitz continuous around $\x$ and $\widehat{\partial} f$ is nonempty-valued around $\x$, then $\partial_L (-f)(\x)\subseteq -\partial_L f(\x)$. 

\item\label{l:calrules_quotient}
(Quotient rule) 
Suppose that $f$ and $g$ are Lipschitz continuous around $\x$, and $g(\x)\neq 0$. If $\widehat{\partial} f$ is nonempty-valued around $\x$, then
\begin{equation*}
\partial_L\left(\frac{-f}{g}\right)(\x) \subseteq \frac{-g(\x)\partial_L f(\x) +\partial_L(f(\x)g)(\x)}{g(\x)^2}.
\end{equation*}
If $f$ is strictly differentiable at $\x$, then
\begin{equation*}
\partial_L\left(\frac{-f}{g}\right)(\x) =\frac{-g(\x)\nabla f(\x) +\partial_L(f(\x)g)(\x)}{g(\x)^2}
\end{equation*}
and, consequently, $-f/g$ is regular at $\x$ if and only if $f(\x)g$ is regular at $\x$.

\item\label{l:calrules_sqrt}
(Chain rule and square root rule)
If $f$ is Lipschitz continuous around $\x$ and $\theta\colon \mathbb{R}\to \mathbb{R}$ is continuously differentiable around $f(\x)$,  
then $\partial_L(\theta \circ f)(\x) =\partial_L (\theta'(f(\x))f)(\x)$. In particular, if $f$ is Lipschitz continuous around $\x$ and $f(\x)>0$, then
\begin{equation*}
\partial_L\left(\sqrt{f}\right)(\x) =\frac{\partial_L f(\x)}{2 \sqrt{f(\x)}} \text{~~and~~} \partial_L\left(-\sqrt{f}\right)(\x) =\frac{\partial_L (-f)(\x)}{2 \sqrt{f(\x)}}.
\end{equation*}
\end{enumerate}
\end{lemma}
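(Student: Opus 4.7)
The plan is to verify each of the five calculus rules by reducing them to basic facts from variational analysis, primarily Mordukhovich's limiting sum rule \cite[Theorem~3.36]{Mor06} and the definitions of the various subdifferentials. I expect parts (i), (ii), and (v) to be short, part (iii) to reduce to a Fréchet dichotomy, and part (iv) to be the main technical step.

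For (i), working from the Fréchet definition on the product space $\mathcal{H}_1 \times \dots \times \mathcal{H}_m$ shows that the difference quotient factorizes block-by-block, so $\widehat\partial(\sum_i f_i)(\x) = \prod_i \widehat\partial f_i(x_i)$; sequential limits give the same factorization for $\partial_L$, and regularity transfers automatically. Part (ii) is precisely the limiting sum rule of \cite[Theorem~3.36]{Mor06}, together with the observation that strict differentiability of $g$ forces $\partial^{\infty} g(\x) = \{0\}$, which both makes the qualification condition trivial and collapses $\partial_L g(\x)$ to $\{\nabla g(\x)\}$. For (v), the mean-value expansion $\theta(f(\z)) - \theta(f(\x)) = \theta'(f(\x))(f(\z)-f(\x)) + o(|f(\z)-f(\x)|)$, combined with the local Lipschitz property of $f$, shows that the difference quotients of $\theta\circ f$ and $\theta'(f(\x)) f$ agree up to first order near $\x$; passing to Fréchet and then limiting subdifferentials gives the claimed chain rule, and the square-root formulas are the specializations to $\theta(t) = \sqrt t$ (using (iii) for the $-\sqrt f$ variant).

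For (iii), the plan is to exploit the nonemptiness hypothesis as follows. Along any defining sequence $\y_n \st{-f}{\to}\x$ with $\bu_n \in \widehat\partial(-f)(\y_n)$ and $\bu_n \to \bu$, both $\widehat\partial(-f)(\y_n)$ and $\widehat\partial f(\y_n)$ are nonempty, which is classically known to force $f$ to be Fréchet differentiable at $\y_n$ with $\widehat\partial(-f)(\y_n) = \{-\nabla f(\y_n)\}$. Hence $-\bu_n = \nabla f(\y_n) \in \partial_L f(\y_n)$; the Lipschitz property of $f$ keeps this sequence bounded, and the robustness property of $\partial_L f$ yields $-\bu \in \partial_L f(\x)$, as required.

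Part (iv) is the main obstacle. The plan is to use the Dinkelbach-type identity
$$-\frac{f(\y)}{g(\y)} + \frac{f(\x)}{g(\x)} = \frac{\psi(\y)}{g(\x) g(\y)} \quad \text{with} \quad \psi(\y) := -g(\x) f(\y) + f(\x) g(\y),$$
valid on a neighborhood of $\x$ on which $g$ does not vanish. Since $\psi(\x) = 0$, the expansion $1/(g(\x) g(\y)) = 1/g(\x)^2 + O(\|\y-\x\|)$ combined with $\psi(\y) = O(\|\y-\x\|)$ gives $\psi(\y)/(g(\x) g(\y)) = \psi(\y)/g(\x)^2 + O(\|\y-\x\|^2)$, and hence $\widehat\partial(-f/g)(\x) = \widehat\partial\psi(\x)/g(\x)^2$. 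A sequential limiting argument — replacing $\x$ by a sequence $\y_n \to \x$ and using uniform convergence of the auxiliary functions $\psi_{\y_n}(\,\cdot\,) := -g(\y_n) f + f(\y_n) g$ to $\psi$ on a neighborhood — upgrades this to $\partial_L(-f/g)(\x) \subseteq \partial_L\psi(\x)/g(\x)^2$. Then part (ii) applied to $\psi = -g(\x) f + f(\x) g$, whose qualification condition is trivial because $f$ and $g$ are locally Lipschitz, gives $\partial_L\psi(\x) \subseteq -g(\x)\partial_L f(\x) + \partial_L(f(\x) g)(\x)$, and the two inclusions combine to the claimed formula. Under strict differentiability of $f$, part (ii) becomes an equality, upgrading the inclusion to the equality in the second assertion, and the regularity equivalence follows by comparing the Fréchet and limiting sides. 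The most delicate step will be controlling the limiting argument — in particular showing that the $O(\|\y-\x\|^2)$ remainder does not contribute to $\partial_L$ and that the shifts $\psi_{\y_n} \mapsto \psi$ correctly interact with the Fréchet subdifferential.
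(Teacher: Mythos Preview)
Your plan for (i)--(iii) and (v) is sound and essentially reproduces what the paper obtains by citation (\cite[Proposition~10.5 and Corollary~10.9]{RW98} for (i)--(ii), \cite[Corollary~3.4]{MNY06} for (iii), \cite[Theorem~1.110(ii)]{Mor06} for (v)). The Dinkelbach decomposition in (iv) is also the right idea; the paper gets it by invoking \cite[Proposition~1.111(ii)]{Mor06}, which delivers the \emph{equality} $\partial_L(-f/g)(\x)=\partial_L\psi(\x)/g(\x)^2$ in one stroke, and then splits $\psi=-g(\x)f+f(\x)g$ via (ii). Note that the paper explicitly invokes (iii) to pass from $\partial_L(-g(\x)f)(\x)$ to $-g(\x)\partial_L f(\x)$ when $g(\x)>0$; you skip this step, and it is not automatic since $\partial_L(cf)=c\,\partial_L f$ can fail for $c<0$.

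There is a genuine gap in your argument for the equality assertion in (iv). Your sequential argument (even if carried out via $\varepsilon$-Fr\'echet subgradients) only yields the inclusion $\partial_L(-f/g)(\x)\subseteq\partial_L\psi(\x)/g(\x)^2$; upgrading the sum rule (ii) to an equality does nothing to reverse that first inclusion, so the second displayed formula in (iv) does not follow. The clean fix---and this is essentially the content of the Mordukhovich result the paper cites---is to observe that the remainder
\[
r(\y):=-\frac{f(\y)}{g(\y)}-\frac{\psi(\y)}{g(\x)^2}+\frac{f(\x)}{g(\x)}
=\psi(\y)\cdot\frac{g(\x)-g(\y)}{g(\x)^2\,g(\y)}
\]
is a product of two locally Lipschitz factors that both vanish at $\x$; a direct estimate shows $|r(\y)-r(\z)|/\|\y-\z\|\to 0$ as $\y,\z\to\x$, so $r$ is strictly differentiable at $\x$ with $\nabla r(\x)=0$. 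Then (ii) gives the equality $\partial_L(-f/g)(\x)=\partial_L\psi(\x)/g(\x)^2$ outright, with no sequential argument needed. The same device resolves the hidden base-point issue in (v): at a nearby $\y$ your expansion identifies $\widehat\partial(\theta\circ f)(\y)$ with $\widehat\partial(\theta'(f(\y))f)(\y)$, not with $\widehat\partial(\theta'(f(\x))f)(\y)$, so a naive limit does not land in $\partial_L(\theta'(f(\x))f)(\x)$; instead note that $\theta\circ f-\theta'(f(\x))f$ is strictly differentiable at $\x$ with derivative zero (mean value plus continuity of $\theta'$), and apply (ii).
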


\begin{lemma}
\label{l:subdiff}
Let $\Lambda :=\{\x\in \mathbb{R}^d: \|\x\|=1\}$ and $C_r :=\{\x\in \mathbb{R}^d: \|\x\|_0 \leq r\}$. Given $\x =(x_1,\dots,x_d)\in \mathbb{R}^d$, set $\supp(\x): =\{j: x_j \neq 0\}$. Then the following statements hold:
\begin{enumerate}
\item\label{l:subdiff_l0}
$\forall \x\in \mathbb{R}^d$, $\widehat{\partial} (\|\cdot\|_0)(\x) =\partial_L (\|\cdot\|_0)(\x) =\partial_L^{\infty} (\|\cdot\|_0)(\x) =\{\bv: v_j=0 \text{~if~} j \in \supp(\x)\}$.

\item\label{l:subdiff_sphere}
$\forall \x\in \Lambda$, $\widehat{\partial} \delta_{\Lambda}(\x) =\partial_L \delta_{\Lambda}(\x) =\partial_L^{\infty} \delta_{\Lambda}(\x) =\{ t\x: t \in \mathbb{R}\}$.

\item\label{l:subdiff_ball}
If $\|\x\|_0=r$, then $\partial_L^{\infty} \delta_{C_r}(\x) =\partial_L \delta_{C_r}(\x) =\{\bv: v_j=0 \text{~if~} j \in \supp(\x)\}$. If $\|\x\|_0<r$, then
\begin{multline*}
\partial_L^{\infty} \delta_{C_r}(\x) =\partial_L \delta_{C_r}(\x) =\{\bv: \exists \widehat{J} \subseteq \{1,\dots,d\} \smallsetminus \supp(\x) \text{~with~} |\widehat{J}|=r-\|\x\|_0,\\ v_j=0 \text{~if~} j \in (\supp(\x) \cup \widehat{J})\}.    
\end{multline*}

\item\label{l:subdiff_sum}
$\forall \x\in \Lambda$, $\partial_L (\|\cdot\|_0+\delta_{\Lambda})(\x) =\partial_L (\|\cdot\|_0)(\x) +\partial_L \delta_{\Lambda}(\x)$.

\item\label{l:subdiff_sum'}
$\forall \x\in \Lambda \cap C_r$, $\partial_L (\delta_{C_r}+\delta_{\Lambda})(\x)\subseteq \partial_L\delta_{C_r}(\x) +\partial_L \delta_{\Lambda}(\x)$.
\end{enumerate}
\end{lemma}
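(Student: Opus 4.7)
My plan is to handle parts \ref{l:subdiff_l0}--\ref{l:subdiff_ball} by direct computation from the definitions of the Fréchet, limiting, and horizon subdifferentials, exploiting the fact that the support map $\supp(\cdot)$ is locally constant (or constant on a coordinate subspace) around any point, and then to derive the sum-rule statements \ref{l:subdiff_sum} and \ref{l:subdiff_sum'} from Lemma \ref{l:calrules}\ref{l:calrules_sum} after checking the required qualification condition.

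For part \ref{l:subdiff_l0}, I would probe the Fréchet subdifferential with small perturbations $\z = \x + t e_j$. If $j \in \supp(\x)$, then $\|\z\|_0 = \|\x\|_0$ for all small $t$, and the liminf inequality forces $v_j = 0$; conversely, once $v_j = 0$ on $\supp(\x)$, any perturbation either stays in the coordinate subspace spanned by $\supp(\x)$ (giving ratio $0$) or activates a new coordinate and produces an integer jump that dominates the linear correction $\langle \bv, \z-\x\rangle$. For the limiting and horizon variants, I use that $\|\cdot\|_0$ is integer-valued and lower semicontinuous, so any $f$-attentive sequence $\x_n \to \x$ satisfies $\supp(\x_n) = \supp(\x)$ eventually; along such a sequence the Fréchet subdifferential is a constant coordinate subspace, and both $\partial_L$ and $\partial^{\infty}$ collapse onto it, the latter because the subspace is closed under positive rescaling. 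For part \ref{l:subdiff_sphere}, the sphere $\Lambda = \{\x : \|\x\|^2 = 1\}$ is a smooth manifold with LICQ, since $\nabla (\|\cdot\|^2)(\x) = 2\x \neq 0$ on $\Lambda$, which yields the classical normal cone $\{t\x : t \in \mathbb{R}\}$; smoothness moreover entails regularity of $\delta_{\Lambda}$, so the three subdifferentials agree.

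For part \ref{l:subdiff_ball}, I split on the size of $\supp(\x)$. When $\|\x\|_0 = r$, small perturbations staying in $C_r$ cannot activate any new coordinate (the existing ones remain nonzero), so $C_r$ coincides locally with the coordinate subspace $\{\y : y_j = 0 \text{ for } j \notin \supp(\x)\}$, and its normal cone is the orthogonal complement. When $\|\x\|_0 < r$, one checks directly that the Fréchet normal cone is $\{0\}$, because each coordinate direction is feasible; the nontrivial limiting normals are then built by approaching $\x$ along sequences $\x_n \to \x$ with $\supp(\x_n) = \supp(\x) \cup \widehat{J}$ of cardinality exactly $r$, at which the first case identifies $\widehat{\partial}\delta_{C_r}(\x_n)$, and extracting a subsequence over the finitely many possible $\widehat{J}$ yields the claimed union. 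The horizon-limiting equality is a consequence of the general identity $\partial^{\infty}\delta_S = \partial_L \delta_S = N_S$ for any closed set $S$, obtained by rescaling Fréchet normals. The main subtle point will be the converse inclusion for the limiting subdifferential in this under-full case: I need to verify that no additional element can arise when $\|\x_n\|_0 < r$ along the approaching sequence, which holds precisely because the Fréchet normal cone at such $\x_n$ is already trivial, so only the completing case contributes.

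Finally, for parts \ref{l:subdiff_sum} and \ref{l:subdiff_sum'}, I invoke Lemma \ref{l:calrules}\ref{l:calrules_sum}. The qualification $\partial^{\infty} f(\x) \cap (-\partial^{\infty} g(\x)) = \{0\}$ is immediate from the preceding computations: any candidate $\bv = -t\x \in \partial^{\infty}\delta_{\Lambda}(\x)$ that also vanishes on $\supp(\x)$ must satisfy $t x_j = 0$ for some $j \in \supp(\x)$, forcing $t = 0$. In \ref{l:subdiff_sum}, $\|\cdot\|_0$ is regular at $\x$ by part \ref{l:subdiff_l0} and $\delta_{\Lambda}$ is regular by smoothness, so Lemma \ref{l:calrules}\ref{l:calrules_sum} delivers equality. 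In \ref{l:subdiff_sum'}, $\delta_{C_r}$ can fail to be regular when $\|\x\|_0 < r$, since part \ref{l:subdiff_ball} shows that its Fréchet and limiting subdifferentials differ there, so only the inclusion is asserted and follows directly from the sum rule.
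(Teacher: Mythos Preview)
Your proposal is correct and follows essentially the same approach as the paper: parts \ref{l:subdiff_l0}--\ref{l:subdiff_ball} are handled by direct computation (the paper defers these to \cite{Le13} and \cite{BLPW14} plus ``direct verification,'' whereas you spell out the arguments), and parts \ref{l:subdiff_sum}--\ref{l:subdiff_sum'} are obtained exactly as in the paper by checking the horizon qualification condition and invoking Lemma~\ref{l:calrules}\ref{l:calrules_sum}, with equality in \ref{l:subdiff_sum} from regularity and only inclusion in \ref{l:subdiff_sum'} because $\delta_{C_r}$ fails to be regular when $\|\x\|_0<r$.
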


We also need the following two notions of stationary points for problem \eqref{eq:prob}.
\begin{definition}[stationary points]
We say that $\overline{\x} =(\overline{x}_1, \dots, \overline{x}_m) \in S$ is a \emph{stationary point} for \eqref{eq:prob} if $0 \in \partial_L (-F+\delta_S)(\overline{\x})$, and a \emph{lifted coordinate-wise stationary point} for \eqref{eq:prob} if, for each $i\in \{1,\dots,m\}$,
\begin{equation*}
0\in \partial_L^{x_i} (-h+\delta_S)(\overline{\x}) +\frac{ -g_i(\overline{x}_i)\partial_L f_i(\overline{x}_i) +f_i(\overline{x}_i)\partial_L g_i(\overline{x}_i)}{g_i(\overline{x}_i)^2},
\end{equation*} 
where $\partial_L^{x_i}$ denotes the subdifferential with respect to the $x_i$-variable.
\end{definition}

The following lemma whose proof is given in Appendix~\ref{s:appendix} provides the relationship between a stationary point and a lifted coordinate-wise stationary point for \eqref{eq:prob}. 

\begin{lemma}[stationary vs. lifted coordinate-wise stationary points]
\label{l:stationary}
Let $\overline{\x} =(\overline{x}_1, \dots, \overline{x}_m) \in \mathcal{H}_1\times \dots\times \mathcal{H}_m$. Suppose that $-h$ is proper lower semicontinuous, that either $m =1$ or $h$ is strictly differentiable at $\overline{\x}$, and that, for each $i\in \{1,\dots,m\}$, $f_i$ and $g_i$ are Lipschitz continuous around $\overline{x}_i$, $f_i(\overline{x}_i) \geq 0$, and $g_i(\overline{x}_i) \neq 0$. Then the following statements hold:
\begin{enumerate}
\item\label{l:stationary_imply}
If for each $i\in \{1,\dots,m\}$, $\widehat{\partial} f_i$ is nonempty-valued around $\overline{x}_i$, then $\overline{\x}$ is a lifted coordinate-wise stationary point for \eqref{eq:prob} whenever it is a stationary point for \eqref{eq:prob}.

\item\label{l:stationary_equi}
If for each $i\in \{1,\dots,m\}$, $f_i$ is strictly differentiable at $\overline{x}_i$ and either (a)
$g_i$ is strictly differentiable at $\overline{x}_i$ or (b) $g_i$ is regular at $\overline{x}_i$ and $-h+\delta_S$ is regular at $\overline{\x}$, then $\overline{\x}$ is a lifted coordinate-wise stationary point for \eqref{eq:prob} if and only if it is a stationary point for \eqref{eq:prob}.
\end{enumerate}
\end{lemma}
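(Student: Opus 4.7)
The plan is to rewrite $-F+\delta_S = (-h+\delta_S) + G$, where $G(\x) := \sum_{i=1}^{m} -f_i(x_i)/g_i(x_i)$ is viewed as a function on the full product space, and to reduce both implications to bookkeeping with the separable sum rule, sum rule, and quotient rule from Lemma~\ref{l:calrules}. The key enabling observation is that each $-f_i/g_i$ is locally Lipschitz around $\overline{x}_i$ (since $f_i,g_i$ are locally Lipschitz and $g_i(\overline{x}_i)\neq 0$), so $G$ is locally Lipschitz around $\overline{\x}$ with $\partial^{\infty} G(\overline{\x}) = \{0\}$, which automatically verifies the horizon qualification needed to combine $-h+\delta_S$ with $G$ via the sum rule.

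For part \ref{l:stationary_imply}, I would start from $0 \in \partial_L(-F+\delta_S)(\overline{\x})$, apply the sum rule to peel off $G$, invoke the separable sum rule to identify $\partial_L G(\overline{\x}) = \prod_{i=1}^m \partial_L(-f_i/g_i)(\overline{x}_i)$, and then apply the quotient rule to each factor; the nonnegativity $f_i(\overline{x}_i)\geq 0$ lets me replace $\partial_L(f_i(\overline{x}_i)g_i)(\overline{x}_i)$ coming out of the quotient rule by the target expression $f_i(\overline{x}_i)\partial_L g_i(\overline{x}_i)$. The remaining step is to verify that the $i$-th coordinate of a joint limiting subgradient of $-h+\delta_S$ at $\overline{\x}$ lies in $\partial_L^{x_i}(-h+\delta_S)(\overline{\x})$: when $m=1$ this is tautological, and when $m>1$ strict differentiability of $h$ combined with $\delta_S = \sum_i \delta_{S_i}$ yields the explicit product description $\partial_L(-h+\delta_S)(\overline{\x}) = -\nabla h(\overline{\x}) + \prod_i \partial_L \delta_{S_i}(\overline{x}_i)$, from which the coordinate-wise reading is immediate.

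For part \ref{l:stationary_equi}, the plan is to upgrade every inclusion above to an equality so that stationarity can be rebuilt from coordinate-wise data. Under (a), strict differentiability of $f_i$ and $g_i$ makes $G$ strictly differentiable, and the strictly-differentiable clause of the sum rule in Lemma~\ref{l:calrules} delivers the exact decomposition $\partial_L(-F+\delta_S)(\overline{\x}) = \partial_L(-h+\delta_S)(\overline{\x}) + \nabla G(\overline{\x})$. Under (b), I would deploy the regularity statement of the quotient rule: since $f_i$ is strictly differentiable and $f_i(\overline{x}_i)g_i$ is a nonnegative scalar multiple of the regular function $g_i$ (the case $f_i(\overline{x}_i)=0$ being trivially regular), each $-f_i/g_i$ is regular at $\overline{x}_i$, hence $G$ is regular at $\overline{\x}$ by the separable sum rule, and the regularity of $-h+\delta_S$ assumed in (b) then activates the equality form of the sum rule. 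Given coordinate-wise certificates of lifted stationarity, I would assemble them into a joint subgradient of $-h+\delta_S$ (trivially when $m=1$, else via the product structure forced by strict differentiability of $h$ and separability of $\delta_S$) and a joint subgradient of $G$ (via the separable sum product), whose sum vanishes, yielding $0 \in \partial_L(-F+\delta_S)(\overline{\x})$.

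The main obstacle I anticipate is the two-way passage between partial and joint limiting subdifferentials of $-h+\delta_S$: the forward direction (joint implies partial coordinate-wise) is standard, but reassembling a joint subgradient from arbitrary partial ones generally fails, so I must lean on the explicit product formula made available by the separability of $\delta_S$ together with strict differentiability of $h$ (or the degenerate case $m=1$). A secondary care-point is the quotient-rule bookkeeping at coordinates where $f_i(\overline{x}_i)=0$: the factor $f_i(\overline{x}_i)$ collapses the corresponding set-valued term to $\{0\}$, and the required inclusion or equality holds trivially for that coordinate.
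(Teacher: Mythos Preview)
Your proposal is correct and follows essentially the same approach as the paper: decompose $-F+\delta_S$ as $(-h+\delta_S)+G$ with $G$ locally Lipschitz, apply the sum rule (via the horizon qualification $\partial^\infty G(\overline{\x})=\{0\}$), the separable sum rule on $G$ and on $\delta_S$, and the quotient rule coordinate-wise, handling the joint/partial subdifferential of $-h+\delta_S$ through the product formula forced by strict differentiability of $h$ plus separability of $\delta_S$ (or trivially when $m=1$). Your explicit treatment of the step $\partial_L(f_i(\overline{x}_i)g_i)(\overline{x}_i)=f_i(\overline{x}_i)\partial_L g_i(\overline{x}_i)$ via $f_i(\overline{x}_i)\geq 0$ and of the regularity bookkeeping in case~(b) is slightly more detailed than the paper's presentation, but the argument is the same.
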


\section{Inertial proximal subgradient method}
\label{s:main}

In this section, we propose an inertial proximal subgradient method for solving the sum-of-ratios optimization problem \eqref{eq:prob} and establish the convergence analysis for the proposed method. From now on, we will work under the following assumption.

\begin{assumption}
\label{a:standing}
For problem~\eqref{eq:prob}, $S$ is a (not necessarily convex) closed set, $-h$ is a proper lower semicontinuous function, and, for each $i\in \{1,\dots,m\}$, the functions $f_i$ and $g_i$ are locally Lipschitz functions on an open set containing $S_i$. Moreover,
\begin{enumerate}[label =(\alph*)]
\item\label{a:f}
For each $i\in \{1, \dots, m\}$, $f_i$ is nonnegative on an open set containing $S_i$ and there exists $\alpha_i \geq 0$ such that, for all $x_i, z_i \in S_i$ and all $u\in \partial_L f_i(x_i)$,
\begin{equation*}
\scal{\frac{u}{2\sqrt{f_i(x_i)}}}{z_i-x_i} \leq \sqrt{f_i(z_i)} -\sqrt{f_i(x_i)} +\frac{\alpha_i}{2}\|z_i-x_i\|^2,
\end{equation*}
whenever $f_i(x_i) >0$.

\item\label{a:g}
For each $i\in \{1, \dots, m\}$, $g_i$ is positive on $S_i$ and there exists $\beta_i\geq 0$ such that, for all $x_i, z_i\in S_i$ and all $v\in \partial_L g_i(x_i)$,
\begin{equation*}
\scal{v}{z_i-x_i} \geq g_i(z_i) -g_i(x_i) -\frac{\beta_i}{2}\|z_i-x_i\|^2.
\end{equation*}
\end{enumerate}
\end{assumption}

\begin{remark}[comments for the standing assumption]
We note that standing Assumption~\ref{a:standing} is quite general and, in particular, are satisfied for our motivating examples.
\begin{enumerate}
\item 
Assumption~\ref{a:standing}\ref{a:f} is fulfilled if, for each $i\in \{1,\dots,m\}$, $f_i$ takes nonnegative values  on an open set $\mathcal{O}_i$ containing $S_i$ and $\sqrt{f_i}$ is weakly convex on $\mathcal{O}_i$ with modulus $\alpha_i$. Clearly, this condition is true if $f_i(x_i) =x_i^\top A_ix_i$ for a positive semi-definite matrix $A_i$ (as in the motivating Example~\ref{ex:Rayleigh} and Example~\ref{ex:eigen}) because $\sqrt{f_i}(x_i)=\|A_i^{1/2}x_i\|$ which is convex, where $A_i^{1/2}$ is a symmetric matrix such that $A_i^{1/2}A_i^{1/2} =A_i$.

Assumption~\ref{a:standing}\ref{a:f} also holds if, for each $i\in \{1,\dots,m\}$, $S_i$ is compact, $f_i$ takes positive values  on an open set $\mathcal{O}_i$ containing $S_i$ and $f_i$ is a differentiable function whose gradient is Lipschitz continuous on $\mathcal{O}_i$ with modulus $L_i$. Indeed, in this case, letting $r_i:=\min_{x_i \in S_i}f_i(x_i)>0$,  a direct verification shows that  $\sqrt{f}_i$ is weakly convex with modulus  $\alpha_i=\frac{L_i}{2\sqrt{r_i}}+\frac{1}{4}r_i^{\frac{-3}{2}}\max_{x_i \in S_i}\|\nabla f_i(x_i)\|^2$. This covers, in particular, the alternative optimization formulation for the energy maximization problem mentioned in the motivating Example~\ref{ex:General_sum_of_ratio}, where $f_i(x_i)=\log(1+u_i^\top x_i+r_i)$ with $u_i \in \mathbb{R}_+^{d_i}\smallsetminus\{0\}$ and $r_i \geq 0$ and, for $i\in \{1, \dots, m\}$, 
$S_i=\{x \in \mathbb{R}_+^{d}: x_i^{\min} \leq x_i \leq x_i^{\max} \mbox{ and } v^\top x \leq r\}$ with $0 < x_i^{\min} \leq x_i^{\max}$, $v \in \mathbb{R}^d_+$ and $r>0$.

Similarly, Assumption~\ref{a:standing}\ref{a:g} is satisfied if, for each $i\in \{1,\dots,m\}$, $g_i$ is positive on $S_i$ and it is a differentiable function whose gradient is Lipschitz continuous with modulus $\beta_i$.
Thus, combining these observations, we see  Assumption~\ref{a:standing} are satisfied for  the important  motivating examples mentioned in the introduction.

\item 
We also notice that the first condition in Assumption~\ref{a:standing}\ref{a:f} ensures that, if $x_i\in S_i$ and $f_i(x_i) =0$, then $0\in \partial_L f_i(x_i)$ for $i\in \{1, \dots, m\}$. 
\end{enumerate}
\end{remark}

We now propose our inertial proximal subgradient method for \eqref{eq:prob}. As we will see later on, this method can be seen as a proximal block coordinate method of Gauss--Seidel type applied to an equivalent non-fractional formulation. It is also worthwhile noting that, even when applied to the single-ratio case ($m =1$ and $h\equiv 0$), our method here is totally different from the proximal type methods in \cite{BC17,BDL20} which are based on Dinkelbach's approach.

\begin{tcolorbox}[
	left=0pt,right=0pt,top=0pt,bottom=0pt,
	colback=blue!10!white, colframe=blue!50!white,
  	boxrule=0.2pt,
  	breakable]
\begin{algo}[Inertial proximal subgradient method for problem \eqref{eq:prob}]
\label{algo:epasa}
\step{}
Choose $\x_{-1} =\x_0 =(x_{1,0},\dots,x_{m,0})\in S$ and set $n =0$. Let $\delta\in \RPP$ and $\overline{\nu}\in \left[0,\delta/2\right)$.

\step{}\label{step:main}
Set $\y_n =(y_{1,n}, \dots, y_{m,n})$ with $y_{i,n} =\frac{\sqrt{f_i(x_{i,n})}}{g_i(x_{i,n})}$. Choose $\tau_n\in \mathbb{R}$ such that $\tau_n \geq \delta +\max_{1\leq i\leq m} \{\frac{1}{2}(2y_{i,n}\alpha_i+y_{i,n}^2\beta_i)\}$, where $\alpha_i$ and $\beta_i$ are defined in Assumption~\ref{a:standing}.
Let $\nu_n\in \left[0,\overline{\nu}/\tau_n\right]$. For each $i\in \{1,\dots,m\}$, let $z_{i,n} =x_{i,n} +\nu_n(x_{i,n}-x_{i,n-1})$, $u_{i,n} \in \partial_L f_i(x_{i,n})$, $v_{i,n}\in \partial_L g_i(x_{i,n})$, and set
\begin{equation*}
w_{i,n} =\begin{cases}
y_{i,n}\frac{u_{i,n}}{\sqrt{f_i(x_{i,n})}} -{y}_{i,n}^2 v_{i,n} & \text{if~} f_i(x_{i,n})>0, \\
0  & \text{if~} f_i(x_{i,n})=0.
\end{cases}
\end{equation*}
Denote $h_{i,n+1}(x_i) :=h(x_{1,n+1},\dots,x_{i-1,n+1}, x_i, x_{i+1,n},\dots,x_{m,n})$ and compute
\begin{equation*}
x_{i,n+1} =\argmax_{x_i \in S_i} \left\{h_{i,n+1}(x_i) -\tau_n \left\|x_i-z_{i,n}-\frac{1}{2\tau_n} w_{i,n}\right\|^2\right\}.
\end{equation*}
Update $\x_{n+1} =(x_{1,n+1},\dots,x_{m,n+1})$.

\step{}
If a termination criterion is not met, set $n =n+1$ and go to Step~\ref{step:main}.
\end{algo}
\end{tcolorbox}

\begin{remark}[discussion on the computational costs]
The major computation cost lies in the update of $\x_{n+1}$ in Step~\ref{step:main}. The update, for each $i\in \{1,\dots,m\}$,
\begin{equation*}
x_{i,n+1} =\argmax_{x_i \in S_i}\left\{h_{i,n+1}(x_i)- \tau_n \left\|x_i-\left(z_{i,n}+\frac{1}{2\tau_n} w_{i,n}\right)\right\|^2\right\}
\end{equation*}
is equivalent to computing the proximal operator\footnote{The \emph{proximal operator} of a function $\varphi$ at $x$ is defined by $\prox_{\varphi}(x) =\argmin_y \left\{\varphi(y) +\frac{1}{2}\|y-x\|^2\right\}$.} of $\frac{1}{2\tau_n}(-h_{i,n+1}+\delta_{S_i})$ at the point $z_{i,n}+\frac{1}{2\tau_n} w_{i,n}$. This can be done efficiently in many situations, for example, in the following cases:
\begin{enumerate}
\item
if $h \equiv 0$, then this reduces to the projection onto the set $S_i$ which, in many cases, has closed forms. This is the case when $S_i$ is a box, $S_i$ is a sphere or a ball, $S_i =\{x: \|x\|_0 \leq r\}$ for $r>0$, $S_i =\{x: \|x\|=1 \text{~and~} \|x\|_0 \leq r\}$ (as in the motivation Example~\ref{ex:eigen} of the sparse generalized eigenvalue problem with $\phi(\x)$ being the indicator function of the sparsity set) and $S_i =\{X \in \mathbb{R}^{p \times d}: X^\top X =I_d\}.$

\item
if $m=1$, $h(\x)= -\lambda \|\x\|_0$ or $h(\x)= -\lambda \|\x\|_1$ with $\lambda \geq 0$, and $S=\{\x: \|\x\|=1\}$ (as in the motivating Example~\ref{ex:eigen} of sparse generalized eigenvalue problem with $\phi(\x)$ being the cardinality regularization or $\ell_1$-regularization), then the resulting proximal operator can be simplified to $\argmin_{\x \in \mathbb{R}^d} \{\|\x+\ba\|^2 -\tau h(\x) : \|\x\|=1\}$ for some $\ba \in \mathbb{R}^d$ and $\tau \geq 0$. This can be further rewritten as $\argmin_{\x \in \mathbb{R}^d} \{\langle 2\ba,\x \rangle -\tau  h(\x) : \|\x\|=1\}$, which has a closed form solution (see \cite[Proposition~6]{SBP15} and \cite{LT13}).

\item
if $h$ is a (possibly) nonconvex quadratic function  and $S_i =\{x: \|x\|=1\}$ (as in the motivating Example~\ref{ex:Rayleigh}), then the resulting problem is a nonconvex quadratic programming problem with norm constraint which is known as the trust region problem. In this case, this problem can be solved efficiently, for example, by solving a related single generalized eigenvalue problem (see \cite{AINT17}). 

\item
if $h$ can be expressed as the maximum of finitely many concave quadratic functions, that is, $h(\x)=\max_{1 \leq r \leq p}\{\frac{1}{2}\x^\top A_r\x+a_r^\top \x+\alpha_r\}$, where each $A_r$ is negative semi-definite  (and so, $h_{i,n+1}$ can also be expressed in this form) and $S_i$ is a polyhedral set, then this is equivalent to the solving of $p$ many quadratic programming problems with linear inequality constraints, and so, it can be solved efficiently via quadratic programming solvers. This, in particular, covers the motivating Example~\ref{ex:General_sum_of_ratio}.
\end{enumerate}
Finally, we also remark that Step~\ref{step:main} also requires the availability of a subgradient of $f_i$ at the current iterate. In general, this requires $f_i$'s to have some specific structure. On the other hand, in many important applications, $f_i$ can be expressed as the maximum/minimum of finitely many continuously differentiable functions, in which case, a subgradient of $f_i$ is easily obtained. 
\end{remark}

\subsection{Interpretation of Algorithm~\ref{algo:epasa}}

Next, we see that  Algorithm~\ref{algo:epasa} can be interpreted as a proximal block coordinate method of Gauss--Seidel type applied to the problem
\begin{equation}\label{eq:prob1}
\max_{\substack{\x=(x_1, \dots, x_m)\in S \\ \y=(y_1, \dots, y_m)\in \mathbb{R}^m}} h(\x)+ H(\x,\y) \quad\text{with~}
H(\x,\y) :=\sum_{i=1}^m \left[2y_i\sqrt{f_i(x_i)}-y_i^2g_i(x_i)\right].
\tag{\ensuremath{\mathcal{P}_1}}
\end{equation}
We say that $(\overline{\x},\overline{\y})\in S\times \mathbb{R}^m$ is a \emph{lifted coordinate-wise stationary point} for \eqref{eq:prob1} if, for each $i\in \{1,\dots,m\}$, 
\begin{equation*}
0\in \partial_L^{x_i} (-h+\delta_S)(\overline{\x}) +\partial_L^{x_i} (-H)(\overline{\x},\overline{\y})
\text{~~and~~} \overline{y}_i =\sqrt{f_i(\overline{x}_i)}/g_i(\overline{x}_i),
\end{equation*}
where the latter is equivalent to $0\in \partial_L^{y_i} (-H)(\overline{\x},\overline{\y})$. 

The relationship between lifted coordinate-wise stationary points for \eqref{eq:prob} and \eqref{eq:prob1} is examined in the next lemma with proof in Appendix~\ref{s:appendix}. In the case where $h \equiv 0$, the following property of {\it global solutions} of \eqref{eq:prob} and \eqref{eq:prob1} was mentioned in \cite[Theorem~2.2]{Ben04} for problem \eqref{eq:classical} with affine numerators and denominators, and given in \cite[Corollary~1]{SY18} for problem \eqref{eq:classical} with non-affine numerators and denominators. It is worth noting that \cite{SY18} only provides the non-fractional reformulation for the nonconvex problem \eqref{eq:classical} in terms of global solutions, and the numerical algorithms were given only for concave-convex cases (that is, all the numerators are concave and denominators are convex, see \cite[Algorithm~1]{SY18}).  Unfortunately, the methods suggested  therein are not of the form of splitting algorithms, and there is no convergence guarantee  provided for the general setting in this paper covering the motivation examples in the introduction.

\begin{lemma}[fractional vs. non-fractional formulations]
\label{l:equi}
Let $\overline{\x} =(\overline{x}_1,\dots,\overline{x}_m) \in \mathcal{H}_1\times \dots \times \mathcal{H}_m$ and $\overline{\y} =(\overline{y}_1, \dots, \overline{y}_m) \in \mathbb{R}^m$ with
$\overline{y}_i =\frac{\sqrt{f_i(\overline{x}_i)}}{g_i(\overline{x}_i)}$. Then the following statements hold:
\begin{enumerate}
\item\label{l:equi_global}
$\overline{\x}$ is a global solution for \eqref{eq:prob} if and only if $(\overline{\x},\overline{\y})$ is a global solution for \eqref{eq:prob1}, in which case, both problems have the same optimal value.

\item\label{l:equi_local}
Suppose that $-h$ is proper lower semicontinuous and finite at $\overline{\x}$ and that, for each $i\in \{1,\dots,m\}$, $f_i$ and $g_i$ are Lipschitz continuous around $\overline{x}_i$, $f_i(\overline{x}_i) >0$, and $g_i(\overline{x}_i) >0$. Then 
\begin{enumerate}
\item
If for each $i\in \{1,\dots,m\}$, $\widehat{\partial} f_i$ is nonempty-valued around $\overline{x}_i$, then $\overline{\x}$ is a lifted coordinate-wise stationary point for $\eqref{eq:prob}$ whenever $(\overline{\x},\overline{\y})$ is a lifted coordinate-wise stationary point for \eqref{eq:prob1}.
\item 
If for each $i\in \{1,\dots,m\}$,  
$f_i$ is strictly differentiable at $\overline{x}_i$, then $\overline{\x}$ is a lifted coordinate-wise stationary point for $\eqref{eq:prob}$ if and only if $(\overline{\x},\overline{\y})$ is a lifted coordinate-wise stationary point for \eqref{eq:prob1}.
\end{enumerate}
\end{enumerate}
\end{lemma}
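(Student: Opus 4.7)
For \ref{l:equi_global}, the plan is to perform partial maximization of the lifted objective in $\y$ for fixed $\x\in S$. For each $i$, the term $2y_i\sqrt{f_i(x_i)}-y_i^2 g_i(x_i)$ is a concave quadratic in $y_i$ (as $g_i(x_i)>0$), uniquely maximized at $y_i^\star=\sqrt{f_i(x_i)}/g_i(x_i)$ with maximum value $f_i(x_i)/g_i(x_i)$. Summing over $i$ and adding $h(\x)$ gives $\max_{\y\in\mathbb{R}^m}\bigl(h(\x)+H(\x,\y)\bigr)=F(\x)$ attained at $\overline{y}_i=\sqrt{f_i(x_i)}/g_i(x_i)$. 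Hence the sup of \eqref{eq:prob1} over $(\x,\y)$ equals the sup of \eqref{eq:prob} over $\x$, and $(\overline{\x},\overline{\y})$ solves \eqref{eq:prob1} iff $\overline{\x}$ solves \eqref{eq:prob} and $\overline{y}_i$ takes the prescribed value.

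For \ref{l:equi_local}, the plan is to compute $\partial_L^{x_i}(-H)(\overline{\x},\overline{\y})$ explicitly and match it with the denominator-squared expression in the definition of a lifted coordinate-wise stationary point for \eqref{eq:prob}. Viewed as a function of $x_i$ alone, $-H(\overline{\x},\overline{\y})$ splits as $\bigl(-2\overline{y}_i\sqrt{f_i(\cdot)}\bigr)+\bigl(\overline{y}_i^{\,2}g_i(\cdot)\bigr)$ plus a constant. Since $f_i(\overline{x}_i)>0$, the square-root chain rule (Lemma~\ref{l:calrules}\ref{l:calrules_sqrt}) yields
\[
\partial_L\bigl(-2\overline{y}_i\sqrt{f_i(\cdot)}\bigr)(\overline{x}_i)=\frac{\overline{y}_i\,\partial_L(-f_i)(\overline{x}_i)}{\sqrt{f_i(\overline{x}_i)}}.
\]
Since $\overline{y}_i^{\,2}\geq 0$, the function $\overline{y}_i^{\,2}g_i$ is Lipschitz around $\overline{x}_i$, and the sum rule (Lemma~\ref{l:calrules}\ref{l:calrules_sum}), together with the substitution $\overline{y}_i=\sqrt{f_i(\overline{x}_i)}/g_i(\overline{x}_i)$, produces
\[
\partial_L^{x_i}(-H)(\overline{\x},\overline{\y})\;\subseteq\;\frac{g_i(\overline{x}_i)\,\partial_L(-f_i)(\overline{x}_i)+f_i(\overline{x}_i)\,\partial_L g_i(\overline{x}_i)}{g_i(\overline{x}_i)^2},
\]
with equality under strict differentiability of $f_i$.

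For \ref{l:equi_local}(a), apply the sign rule (Lemma~\ref{l:calrules}\ref{l:calrules_sign}) using that $\widehat{\partial}f_i$ is nonempty-valued around $\overline{x}_i$: $\partial_L(-f_i)(\overline{x}_i)\subseteq -\partial_L f_i(\overline{x}_i)$. Combining this with the displayed inclusion shows that a stationary condition $0\in \partial_L^{x_i}(-h+\delta_S)(\overline{\x})+\partial_L^{x_i}(-H)(\overline{\x},\overline{\y})$ for \eqref{eq:prob1} implies
\[
0\in \partial_L^{x_i}(-h+\delta_S)(\overline{\x})+\frac{-g_i(\overline{x}_i)\partial_L f_i(\overline{x}_i)+f_i(\overline{x}_i)\partial_L g_i(\overline{x}_i)}{g_i(\overline{x}_i)^2},
\]
which is exactly the lifted coordinate-wise stationarity of $\overline{\x}$ for \eqref{eq:prob}. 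For \ref{l:equi_local}(b), strict differentiability of $f_i$ turns both the chain rule and the sum rule into equalities (Lemma~\ref{l:calrules}\ref{l:calrules_sum},\ref{l:calrules_sqrt}), so $\partial_L(-f_i)(\overline{x}_i)=\{-\nabla f_i(\overline{x}_i)\}$ and the inclusion above becomes an equality, yielding the if-and-only-if.

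The main obstacle is bookkeeping the subdifferential calculus when passing between $-\sqrt{f_i}$, $-f_i$, and the scalar multiples $2\overline{y}_i$, $\overline{y}_i^{\,2}$: the nonnegativity of $\overline{y}_i$ (guaranteed by $f_i\geq 0$, $g_i>0$) avoids sign inversions, and the hypothesis $f_i(\overline{x}_i)>0$ is exactly what makes the square-root chain rule applicable. Once the identification of $\partial_L^{x_i}(-H)(\overline{\x},\overline{\y})$ with the numerator in the quotient-rule expression is achieved, the two stationarity conditions coincide term by term.
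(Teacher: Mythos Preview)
Your proposal is correct and follows essentially the same approach as the paper: partial maximization in $\y$ for part~\ref{l:equi_global}, and for part~\ref{l:equi_local} the computation of $\partial_L^{x_i}(-H)(\overline{\x},\overline{\y})$ via the square-root chain rule, sum rule, and sign rule of Lemma~\ref{l:calrules}, with the inclusions becoming equalities under strict differentiability. The paper's proof is slightly more compressed but uses precisely the same chain of calculus rules and substitutions.
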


\begin{remark}[interpretation of Algorithm~\ref{algo:epasa} as a block coordinate inertial proximal algorithm]
Suppose that, for each $i\in \{1,\dots,m\}$, $f_i$ is nonnegative and $g_i$ is continuously differentiable on an open set containing $S_i$. We will show that Algorithm~\ref{algo:epasa} can be interpreted as a block coordinate inertial proximal  subgradient algorithm. To see this, we recall that, according to Lemma~\ref{l:equi}, problem~\eqref{eq:prob} is equivalent to \eqref{eq:prob1}. 
As, for each $i\in \{1,\dots,m\}$, $y_i \mapsto H_i(x_i,y_i) :=2y_i\sqrt{f_i(x_i)}-y_i^2g_i(x_i)$ is a strongly concave one-variable quadratic function which admits a global maximizer at $\frac{\sqrt{f_i(\x_{i})}}{g_i(\x_{i})}$, one has
\begin{align*}
\y_{n+1} &=\argmax_{\y \in \mathbb{R}^m}  \{h(\x_{n+1}) +\sum_{i=1}^m H_i(x_{i,n+1},y_{i})  \} \\
&=\argmax_{\y \in \mathbb{R}^m} \{h(\x_{n+1}) +H(\x_{n+1},\y)\}.
\end{align*}
Let $i\in \{1,\dots,m\}$. We see that,  if $f_i(x_{i,n})>0$, then
\begin{equation*}
w_{i,n} =y_{i,n}\frac{u_{i,n}}{\sqrt{f_i(x_{i,n})}} -y_{i,n}^2 v_{i,n}\in \frac{y_{i,n}\partial_L f_i(x_{i,n})}{\sqrt{f_i(x_{i,n})}} -y_{i,n}^2 \nabla g_i(x_{i,n}) =\partial_L^x H_i(x_{i,n},y_{i,n}).
\end{equation*}
If $f_i(x_{i,n}) =0$, then $y_{i,n}=0$, $w_{i,n} =0$, and, since $\sqrt{f_i(x_i)} \geq 0$ on an open set containing $S_i$ and $x_{i,n} \in S_i$, one has
$0 \in \partial_L(\sqrt{f_i})(x_{i,n})$, which implies that
\begin{equation*}
w_{i,n}=0 \in y_{i,n} \partial_L (\sqrt{f}_i)(x_{i,n}) - y_{i,n}^2 \nabla g_i(x_{i,n}) = \partial_L^x H_i(x_{i,n},y_{i,n}).
\end{equation*}
So, the update for $\x_{n+1} =(x_{1,n+1},\dots,x_{m,n+1})$ involves, for $i\in \{1,\dots,m\}$,
\begin{multline*}
x_{i,n+1} =\argmax_{x_i \in S_i}\left\{h_{i,n+1}(x_i)- \tau_n \left\|x_i-\left(z_{i,n}+\frac{1}{2\tau_n}w_{i,n} \right)\right\|^2\right\} \\ \text{~~with~~} w_{i,n}\in \partial_L^x H_i(x_{i,n},y_{i,n}).
\end{multline*}
Combining the above observations, one sees that Algorithm~\ref{algo:epasa} can be regarded as a block coordinate inertial proximal subgradient algorithm applied to problem~\eqref{eq:prob1}, where proximal subgradient steps are applied cyclically to the $\x$-variable and a direct maximization step is applied to the $\y$-variable.
\end{remark}

\subsection{Convergence analysis} 

In this part, we discuss the convergence analysis for Algorithm~\ref{algo:epasa}. Let us first start with the subsequential convergence. To do this, we shall consider the following assumption.
\begin{assumption}
\label{a:add}
For problem~\eqref{eq:prob}, either one of the following holds: 
\begin{enumerate}[label =(\alph*)]
\item
$m =1$, $-h +\delta_S$ and $g_1$ are regular on $S$, and $f_1$ is strictly differentiable on an open set containing $S$;
\item
$h$ is strictly differentiable on an open set containing $S$, $S$ is regular, and for each $i\in \{1,\dots,m\}$, $f_i$ is strictly differentiable on an open set containing $S_i$ and $g_i$ is regular on $S_i$;
\item
$h$ is strictly differentiable on an open set containing $S$ and, for each $i\in \{1,\dots,m\}$, $f_i$ and $g_i$ are strictly differentiable on an open set containing $S_i$.
\end{enumerate}
\end{assumption}

It is worth noting that Assumption \ref{a:add} is satisfied with all of our motivation  examples. We are now ready to state our first main result as below.

\begin{theorem}[subsequential convergence]
\label{t:cvg}
Let $(\x_n)_{n\in \mathbb{N}}$ be the sequence generated by Algorithm~\ref{algo:epasa}. Suppose that Assumption~\ref{a:standing} holds, that $F$ is bounded from above on $S$, and that the set $\{\x\in S: F(\x) \geq F(\x_0)\}$ is bounded. Then the following statements hold:
\begin{enumerate}
\item\label{t:cvg_decrease}
For all $n\in \mathbb{N}$, $F(\x_n) - \overline{\nu}\|\x_n-\x_{n-1}\|^2 \leq F(\x_{n+1}) -(\delta-\overline{\nu})\|\x_{n+1}-\x_n\|^2$.

\item\label{t:cvg_seq}
The sequence $(F(\x_n))_{n\in \mathbb{N}}$ is convergent, the sequence $(\x_n)_{n\in \mathbb{N}}$ is bounded, and $\sum_{n=0}^{+\infty} \|\x_{n+1}-\x_n\|^2 <+\infty$.

\item\label{t:cvg_crit}
Let $\overline{\x}$ be a cluster point of $(\x_n)_{n\in \mathbb{N}}$ and suppose that $\limsup_{n\to +\infty} \tau_n =\overline{\tau} <+\infty$ and that either $m =1$ or $h$ is continuous on $S \cap \dom h$. Then $\lim_{n\to +\infty} F(\x_n) =F(\overline{\x})$ and $\overline{\x}\in S$ is a lifted coordinate-wise stationary point for \eqref{eq:prob}. If additional Assumption~\ref{a:add} holds, then $\overline{\x}$ is a stationary point for \eqref{eq:prob}. 
\end{enumerate}
\end{theorem}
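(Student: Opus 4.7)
The plan is to derive the sufficient-decrease inequality (i), draw (ii) from it via a merit function, and finally pass to the limit in the first-order optimality condition of each block update to obtain (iii).

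For (i), I would interpret Algorithm~\ref{algo:epasa} as a Gauss--Seidel step for the lifted problem \eqref{eq:prob1}, as the preceding remark already suggests. Setting $\phi_{i,n}(x_i) := 2y_{i,n}\sqrt{f_i(x_i)} - y_{i,n}^2 g_i(x_i)$, combine the weak-convexity bound of Assumption~\ref{a:standing}\ref{a:f} for $\sqrt{f_i}$ and the weak-concavity bound of Assumption~\ref{a:standing}\ref{a:g} for $g_i$ (scaled by the nonnegative factors $2y_{i,n}$ and $-y_{i,n}^2$) to obtain
\begin{equation*}
\phi_{i,n}(x_{i,n+1}) \geq \phi_{i,n}(x_{i,n}) + \langle w_{i,n}, x_{i,n+1} - x_{i,n}\rangle - \frac{2y_{i,n}\alpha_i + y_{i,n}^2\beta_i}{2}\|x_{i,n+1} - x_{i,n}\|^2,
\end{equation*}
which remains valid when $f_i(x_{i,n})=0$, in which case $y_{i,n}=w_{i,n}=0$. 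Testing the argmax in Step~\ref{step:main} at $x_i = x_{i,n}$, expanding the squares via $z_{i,n} = x_{i,n} + \nu_n(x_{i,n}-x_{i,n-1})$ with the Young-type bound $\|x_{i,n+1}-z_{i,n}\|^2 - \|x_{i,n}-z_{i,n}\|^2 \geq (1-\nu_n)\|x_{i,n+1}-x_{i,n}\|^2 - \nu_n\|x_{i,n}-x_{i,n-1}\|^2$, and adding the two inequalities, the choice $\tau_n \geq \delta + \frac{1}{2}\max_i(2y_{i,n}\alpha_i + y_{i,n}^2\beta_i)$ together with $\tau_n\nu_n \leq \overline{\nu}$ absorbs the weak-convexity moduli. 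Summing over $i$, the Gauss--Seidel index pattern telescopes $\sum_i[h_{i,n+1}(x_{i,n+1}) - h_{i,n+1}(x_{i,n})]$ to $h(\x_{n+1})-h(\x_n)$ and $\sum_i[\phi_{i,n}(x_{i,n+1}) - \phi_{i,n}(x_{i,n})]$ to $H(\x_{n+1},\y_n)-H(\x_n,\y_n)$; using $F(\x_n) = h(\x_n) + H(\x_n,\y_n)$ (since $\y_n$ maximizes $H(\x_n,\cdot)$) and $F(\x_{n+1}) \geq h(\x_{n+1}) + H(\x_{n+1},\y_n)$ completes (i). For (ii), set $E_n := F(\x_n) - \overline{\nu}\|\x_n - \x_{n-1}\|^2$; item~(i) and $\overline{\nu} < \delta/2$ give $E_{n+1} - E_n \geq (\delta - 2\overline{\nu})\|\x_{n+1}-\x_n\|^2 \geq 0$, and $F$ being bounded above forces $(E_n)$ to converge, whence $\sum_n\|\x_{n+1}-\x_n\|^2 <+\infty$ and $F(\x_n)\to\lim E_n$. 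The initialization $\x_{-1}=\x_0$ yields $E_0 = F(\x_0)$, so $F(\x_n) \geq F(\x_0)$ confines $(\x_n)$ to the assumed bounded level set.

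For (iii), let $\x_{n_k} \to \overline{\x}\in S$; summability gives $\x_{n_k+1} \to \overline{\x}$ and $z_{i,n_k} \to \overline{x}_i$. Since $S$ is closed and $g_i$ is continuous and positive on $S_i$, $g_i(x_{i,n})$ is bounded below by a positive constant on the compact closure of $(\x_n)$, so the rewriting $w_{i,n} = u_{i,n}/g_i(x_{i,n}) - (f_i(x_{i,n})/g_i(x_{i,n})^2)v_{i,n}$ makes $w_{i,n}$ bounded (trivially also when $f_i(x_{i,n})=0$). Along a further subsequence, $u_{i,n_k} \to \overline{u}_i\in\partial_L f_i(\overline{x}_i)$, $v_{i,n_k} \to \overline{v}_i\in\partial_L g_i(\overline{x}_i)$, $\tau_{n_k}\to\overline{\tau}$, and $w_{i,n_k} \to w^*_i := \overline{u}_i/g_i(\overline{x}_i) - (f_i(\overline{x}_i)/g_i(\overline{x}_i)^2)\overline{v}_i$. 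To establish $\lim F(\x_n) = F(\overline{\x})$, upper-semicontinuity of $F$ (from $-h$ lsc and $f_i/g_i$ continuous) gives $\lim F(\x_n) \leq F(\overline{\x})$, so in particular $F(\overline{\x})>-\infty$; the matching lower bound follows from continuity of $h$ on $S\cap\dom h$ when $m\geq 2$, and, when $m=1$, from evaluating the argmax inequality at the test point $\overline{x}_1\in S$ and letting $k\to\infty$ to produce $\liminf h(x_{1,n_k+1}) \geq h(\overline{x}_1)$. The first-order necessary condition at $x_{i,n+1}$, via the smooth sum rule and the separable structure of $\delta_S$, reads
\begin{equation*}
w_{i,n} - 2\tau_n(x_{i,n+1}-z_{i,n}) \in \partial_L^{x_i}(-h+\delta_S)(x_{1,n+1},\dots,x_{i,n+1},x_{i+1,n},\dots,x_{m,n}).
\end{equation*}
The left-hand side tends to $w^*_i$, the argument tends to $\overline{\x}$, and $h$ at the argument tends to $h(\overline{\x})$, so the robustness of $\partial_L$ yields $w^*_i \in \partial_L^{x_i}(-h+\delta_S)(\overline{\x})$. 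Rewriting $-w^*_i = (-g_i(\overline{x}_i)\overline{u}_i + f_i(\overline{x}_i)\overline{v}_i)/g_i(\overline{x}_i)^2$ displays $\overline{\x}$ as a lifted coordinate-wise stationary point for~\eqref{eq:prob}; the stationary-point conclusion under Assumption~\ref{a:add} is then immediate from Lemma~\ref{l:stationary}\ref{l:stationary_equi}.

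The main obstacle is the limit passage in (iii): one must bound $w_{i,n}$ uniformly (the rewriting via $1/g_i$ rather than $y_{i,n}/\sqrt{f_i}$ is essential to handle $f_i(x_{i,n_k}) \to 0$), track the Gauss--Seidel indices through the partial limiting subdifferential, ensure the function values $h(\cdot)$ converge so that $\partial_L$-robustness applies, and, in the $m=1$ case, extract the missing lower bound on $h$ from the argmax inequality itself since only upper semicontinuity of $h$ is assumed.
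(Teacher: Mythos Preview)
Your argument for (i) and (ii) is correct and follows the same route as the paper: the weak-convexity estimates for $\sqrt{f_i}$ and $g_i$ combined with the argmax optimality at $x_i=x_{i,n}$, the Gauss--Seidel telescoping of $h_{i,n+1}$, and the monotone merit $E_n$. Your handling of the $m=1$ branch of (iii) is also correct and matches the paper's Case~2.

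The gap is in (iii) for $m\geq 2$. You write the first-order inclusion
\[
w_{i,n} - 2\tau_n(x_{i,n+1}-z_{i,n}) \in \partial_L^{x_i}(-h+\delta_S)(x_{1,n+1},\dots,x_{i,n+1},x_{i+1,n},\dots,x_{m,n})
\]
and then invoke ``robustness of $\partial_L$'' to pass to the limit. But $\partial_L^{x_i}$ here is the limiting subdifferential of the \emph{slice} $x_i\mapsto(-h+\delta_S)(\ldots,x_i,\ldots)$ with the remaining coordinates frozen, and those coordinates are themselves moving along the subsequence. The standard robustness (graph-closedness) of $\partial_L$ applies to a \emph{fixed} function; there is no off-the-shelf result saying that the partial subdifferential of a parametrised family of slices is jointly outer-semicontinuous in the point and the parameter under mere continuity of $h$. (Were $h$ strictly differentiable you could peel off $-\nabla_{x_i}h$ and reduce to the fixed function $\delta_{S_i}$, but the hypothesis in (iii) is only continuity of $h$ on $S\cap\dom h$.)

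The paper avoids this by passing to the limit not in the subdifferential inclusion but in the \emph{argmax inequality} itself: from the optimality of $x_{i,k_n+1}$ one has, for every fixed $x_i\in S_i$,
\[
h_{i,k_n+1}(x_i) - h_{i,k_n+1}(x_{i,k_n+1}) \leq \tau_{k_n}\|x_i-x_{i,k_n}\|^2 + \langle w_{i,k_n},x_{i,k_n+1}-x_i\rangle + (\text{vanishing terms}).
\]
This is a pointwise inequality of real numbers; continuity of $h$ lets you send $n\to\infty$ on both sides to obtain that $\overline{x}_i$ minimises $x_i\mapsto -h(\overline{x}_1,\dots,x_i,\dots,\overline{x}_m)+\overline{\tau}\|x_i-\overline{x}_i\|^2-\langle\overline{w}_i,x_i\rangle+\delta_{S_i}(x_i)$, and only \emph{then} is the subdifferential taken, once, at the limit point. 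No parametric stability of $\partial_L$ is required.
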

\begin{proof}
\ref{t:cvg_decrease}: 
Let any $i\in \{1,\dots,m\}$ and any $n\in \mathbb{N}$. From Step~\ref{step:main} of Algorithm~\ref{algo:epasa}, we have that
$\x_{i,n}\in S_i$, $y_{i,n}\geq 0$, and, for all $x_i \in S_i$,
\begin{equation*}
h_{i,n+1}(x_i)-\tau_n\left\|x_i-z_{i,n}-\frac{1}{2\tau_n}w_{i,n}\right\|^2 \leq h_{i,n+1}(x_{i,n+1})-\tau_n \left\|x_{i,n+1}-z_{i,n}-\frac{1}{2\tau_n}w_{i,n}\right\|^2,
\end{equation*}
which yields
\begin{align}
h_{i,n+1}(x_i) -h_{i,n+1}(x_{i,n+1}) &\leq -\tau_n\|x_{i,n+1}-z_{i,n}\|^2 +\tau_n\|x_i-z_{i,n}\|^2 +\langle w_{i,n},x_{i,n+1}-x_i \rangle \notag \\
&= -\tau_n\|x_{i,n+1}-x_{i,n}\|^2 +\tau_n\|x_i-x_{i,n}\|^2 +\langle w_{i,n},x_{i,n+1}-x_i \rangle \notag \\
&\hspace*{4cm} +2\tau_n\nu_n \langle x_{i,n+1}-x_i, x_{i,n}-x_{i,n-1}\rangle, \label{eq:use1}
\end{align}
where the last equality follows from the fact that $z_{i,n} =x_{i,n} +\nu_n(x_{i,n}-x_{i,n-1})$.
By letting $x_i =x_{i,n}$, 
\begin{align*}
h_{i,n+1}(x_{i,n}) -h_{i,n+1}(x_{i,n+1}) &\leq -\tau_n\|x_{i,n+1}-x_{i,n}\|^2 +\langle w_{i,n},x_{i,n+1}-x_{i,n} \rangle \\
&\qquad +2\tau_n\nu_n \langle x_{i,n+1}-x_{i,n}, x_{i,n}-x_{i,n-1}\rangle.
\end{align*}
Since $i$ is arbitrary and $2\scal{x_{i,n+1}-x_{i,n}}{x_{i,n}-x_{i,n-1}}\leq \|x_{i,n+1}-x_{i,n}\|^2 +\|x_{i,n}-x_{i,n-1}\|^2$, we deduce that
\begin{align}
h(\x_n) -h(\x_{n+1}) &= \sum_{i=1}^m \left(h_{i,n+1}(x_{i,n}) -h_{i,n+1}(x_{i,n+1})\right) \notag \\
&\leq -(\tau_n-\tau_n\nu_n)\|\x_{n+1}-\x_n\|^2 +\tau_n\nu_n\|\x_n-\x_{n-1}\|^2 \notag \\ 
&\qquad +\sum_{i=1}^m \scal{w_{i,n}}{x_{i,n+1}-x_{i,n}}, \label{eq:19}
\end{align}
where the first equality follows from the definition of $h_{i,n+1}$.

Next, we show that
\begin{equation*}
\scal{w_{i,n}}{x_{i,n+1}-x_{i,n}} \leq \frac{f_i(x_{i,n+1})}{g_i(x_{i,n+1})} -\frac{f_i(x_{i,n})}{g_i(x_{i,n})} +(\tau_n-\delta)\|x_{i,n+1}-x_{i,n}\|^2.
\end{equation*}
To see this, let us first consider the case when $f_i(x_{i,n}) >0$. Then $w_{i,n} 
=2y_{i,n}\frac{u_{i,n}}{2\sqrt{f_i(x_{i,n})}} -y_{i,n}^2v_{i,n}$.
Since $u_{i,n}\in \partial_L f_i(x_{i,n})$, the assumption on $\sqrt{f_i}$ gives
\begin{equation}\label{eq:sqrt f_i}
\scal{\frac{u_{i,n}}{2\sqrt{f_i(x_{i,n})}}}{x_{i,n+1}-x_{i,n}} \leq \sqrt{f_i(x_{i,n+1})} - \sqrt{f_i(x_{i,n})} +\frac{\alpha_i}{2}\|x_{i,n+1}-x_{i,n}\|^2.
\end{equation}
Since $v_{i,n}\in \partial_L g_i(x_{i,n})$, the assumption on $g_i$ gives
\begin{equation}\label{eq:g_i}
\scal{v_{i,n}}{x_{i,n+1}-x_{i,n}} \geq g_i(x_{i,n+1}) -g_i(x_{i,n}) -\frac{\beta_i}{2}\|x_{i,n+1}-x_{i,n}\|^2.
\end{equation}
Multiplying \eqref{eq:sqrt f_i} by $2y_{i,n} \geq 0$ and \eqref{eq:g_i} by $-y_{i,n}^2 \leq 0$ and then adding them we obtain that
\begin{multline}\label{eq:sumup}
\scal{w_{i,n}}{x_{i,n+1}-x_{i,n}} \leq H_i(x_{i,n+1},y_{i,n}) -H_i(x_{i,n},y_{i,n}) \\
+\frac{1}{2}(2y_{i,n}\alpha_i+y_{i,n}^2\beta_i)\|x_{i,n+1}-x_{i,n}\|^2,
\end{multline}
where $H_i(x_i,y_i) :=2y_i\sqrt{f_i(x_i)}-y_i^2g_i(x_i)$.
On the other hand, if $f_i(x_{i,n}) =0$, then $y_{i,n} = 0$ and $w_{i,n} =0$, hence \eqref{eq:sumup} still holds. In turn, from \eqref{eq:sumup} and the fact that $y_{i,n+1}$ is the maximizer of $H_i(x_{i,n+1},\cdot)$, we derive that
\begin{align*}
\scal{w_{i,n}}{x_{i,n+1}-x_{i,n}} &\leq H_i(x_{i,n+1},y_{i,n+1}) -H_i(x_{i,n},y_{i,n}) \\
&\qquad +\frac{1}{2}(2y_{i,n}\alpha_i+y_{i,n}^2\beta_i)\|x_{i,n+1}-x_{i,n}\|^2 \\
&\leq \frac{f_i(x_{i,n+1})}{g_i(x_{i,n+1})} -\frac{f_i(x_{i,n})}{g_i(x_{i,n})} +(\tau_n-\delta)\|x_{i,n+1}-x_{i,n}\|^2,
\end{align*}
where the last inequality follows by our choice of $\tau_n$. By combining this with \eqref{eq:19}, 
\begin{multline*}
(\delta -\tau_n\nu_n)\|\x_{n+1}-\x_n\|^2 -\tau_n\nu_n\|\x_n-\x_{n-1}\|^2 \\ \leq \left[h(\x_{n+1})+\sum_{i=1}^m \frac{f_i(x_{i,n+1})}{g_i(x_{i,n+1})}\right] - \left[h(\x_n)+\sum_{i=1}^m\frac{f_i(x_{i,n})}{g_i(x_{i,n})}\right].
\end{multline*}
Since $\nu_n \leq \overline{\nu}/\tau_n$, we get the claimed inequality.

\ref{t:cvg_seq}:
For all $n\in \mathbb{N}$, set $\theta_n :=F(\x_n) -\overline{\nu}\|\x_n-\x_{n-1}\|^2$. It follows from $\overline{\nu}\in \left[0,\delta/2\right)$ that $\delta-2\overline{\nu} >0$. According to \ref{t:cvg_decrease}, for all $n\in \mathbb{N}$,
\begin{equation}\label{eq:9}
\theta_n \leq \theta_{n+1} -(\delta-2\overline{\nu})\|\x_{n+1}-\x_n\|^2,
\end{equation}
and hence $(\theta_n)_{n\in \mathbb{N}}$ is nondecreasing. As $F$ is bounded from above on $S$, there exists $M>0$ such that $\sup_{n\in \mathbb{N}} F(\x_n) \leq M$. Then $\sup_{n\in \mathbb{N}} \theta_n \leq M$, and so $\theta_n \to \theta^*$ as $n \to +\infty$. Let $k \in \mathbb{N}$. Summing \eqref{eq:9} from $n=0$ to $k$, we have
\begin{equation*}
(\delta-2\overline{\nu})\sum_{n=0}^k \|\x_{n+1}-\x_n\|^2\leq \theta_{k+1}-\theta_0.
\end{equation*}
Letting $k \to +\infty$, we see that $\sum_{n=0}^{\infty} \|\x_{n+1}-\x_n\|^2<+\infty$. In particular,
$\|\x_{n+1}-\x_n\| \to 0$ as $n \to +\infty$. Thus, $F(\x_n) =\theta_n +\overline{\nu}\|\x_n-\x_{n-1}\|^2 \to \theta^*$ as $n \to +\infty$.

Next, to see the boundedness of $(\x_n)_{n \in \mathbb{N}}$, we observe from the nondecreasing property of $(\theta_n)_{n\in \mathbb{N}}$ that $F(\x_n) \geq \theta_n \geq \theta_0 = F(\x_0) -  \overline{\nu}\|\x_0-\x_{-1}\|^2 =F(x_0)$, 
where the last equality follows as $\x_{-1}=\x_0$. So, $(\x_n)_{n \in \mathbb{N}} \subseteq \{\x \in S
: F(\x) \geq F(\x_0)\}$, and hence $(\x_n)_{n \in \mathbb{N}}$ is bounded.

\ref{t:cvg_crit}:
Let $\overline{\x} =(\overline{x}_1,\dots,\overline{x}_m)$ be any cluster point of $(\x_n)_{n\in \mathbb{N}}$ and let $(\x_{k_n})_{n\in \mathbb{N}}$ be a subsequence of $(\x_n)_{n\in \mathbb{N}}$ such that $\x_{k_n}\to \overline{\x}$ as $n \to +\infty$. Then $\overline{\x}\in S$ and, by the asymptotic regularity, $\x_{k_n+1}\to \overline{\x}$ as $n \to +\infty$. Fix any $i\in \{1,\dots,m\}$. By the local Lipschitz continuity of $f_i$ and $g_i$, we have that $f_i(x_{i,k_n})\to f_i(\overline{x}_i)$, $g_i(x_{i,k_n})\to g_i(\overline{x}_i) >0$, and, by \cite[Corollary~1.81]{Mor06}, $(u_{i,k_n})_{n\in \mathbb{N}}$ and $(v_{i,k_n})_{n\in \mathbb{N}}$ are bounded. Noting also that
\begin{align*}
w_{i,k_n} &=\begin{cases}
\frac{g_i(x_{i,k_n})u_{i,k_n} -f_i(x_{i,k_n})v_{i,k_n}}{(g_i(x_{i,k_n}))^2} &\text{if~} f_i(x_{i,k_n}) >0,\\
0 & \text{if~} f_i(x_{i,k_n}) =0
\end{cases}\\ 
&\in \frac{g_i(x_{i,k_n}) \partial_L f_i(x_{i,k_n})-f_i(x_{i,k_n}) \partial_L g_i(x_{i,k_n}) }
{(g_i(x_{i,k_n}))^2},
\end{align*}
one sees $(w_{i,k_n})_{n\in \mathbb{N}}$ is bounded. Passing to a subsequence if necessary, we can assume that
\begin{equation*}
w_{i,k_n} \to \overline{w}_i \in \frac{g_i(\overline{x}_i) \partial_L f_i(\overline{x}_i)-f_i(\overline{x}_i) \partial_L g_i(\overline{x}_i) }
{g_i(\overline{x}_i)^2} \quad \text{as~~} n \to +\infty.
\end{equation*}
Now, replacing $n$ by $k_n$ in \eqref{eq:use1}, we have, for all $x_i \in S_i$ and all $n\in \mathbb{N}$, that
\begin{align}\label{eq:use2}
&h_{i,k_n+1}(x_i) -h_{i,k_n+1}(x_{i,k_n+1}) \\
&\qquad \leq -\tau_{k_n}\|x_{i,k_n+1}-x_{i,k_n}\|^2 +\tau_{k_n}\|x_i-x_{i,k_n}\|^2 +\langle w_{i,k_n},x_{i,k_n+1}-x_i \rangle \notag \\
&\qquad\qquad +2\tau_{k_n}\nu_{k_n}\langle x_{i,k_n+1}-x_i, x_{i,k_n}-x_{i,k_n-1}\rangle. \notag 
\end{align}
We shall split the proof in two following cases. 

\emph{Case~1:} $h$ is continuous on $S \cap \dom h$. Then $\lim_{n\to +\infty} h(\x_{k_n}) =h(\overline{\x})$, and so $\lim_{n\to +\infty} F(\x_n) =\lim_{n\to +\infty} F(\x_{k_n}) =F(\overline{\x})$.
Letting $n \to +\infty$ in \eqref{eq:use2}, we derive that, for all $x_i \in S_i$, 
\begin{equation*}
h(\overline{x}_1,\dots,\overline{x}_{i-1},x_i,\overline{x}_{i+1},\dots,\overline{x}_m) -h(\overline{\x}) \leq \overline{\tau}\|x_i-\overline{x}_i\|^2 +\langle \overline{w}_i, \overline{x}_i-x_i \rangle,
\end{equation*}
which means  
\begin{align*}
\overline{x}_i&\in \argmin_{x_i\in S_i} \{-h(\overline{x}_1,\dots,\overline{x}_{i-1},x_i,\overline{x}_{i+1},\dots,\overline{x}_m)
+\overline{\tau} \|x_i-\overline{x}_i\|^2 -\langle \overline{w}_i, x_i \rangle\} \\
&=\argmin_{x_i\in \mathcal{H}_i} \{(-h+\delta_S)(\overline{x}_1,\dots,\overline{x}_{i-1},x_i,\overline{x}_{i+1},\dots,\overline{x}_m)
+\overline{\tau} \|x_i-\overline{x}_i\|^2 -\langle \overline{w}_i, x_i \rangle\}.
\end{align*}
It follows that
\begin{equation*}
0\in \partial_L^{x_i} (-h+\delta_S)(\overline{\x}) -\overline{w}_i \subseteq \partial_L^{x_i} (-h+\delta_S)(\overline{\x}) +\frac{ -g_i(\overline{x}_i)\partial_L f_i(\overline{x}_i) +f_i(\overline{x}_i)\partial_L g_i(\overline{x}_i)}{g_i(\overline{x}_i)^2}. 
\end{equation*}
As this inclusion holds for any $i\in \{1,\dots,m\}$, $\overline{\x}$ is a lifted coordinate-wise stationary point for \eqref{eq:prob}.

\emph{Case~2:} $m =1$. Then \eqref{eq:use2} reduces to, for all $\x \in S$ and all $n\in \mathbb{N}$,
\begin{align*}
h({\x}) -h(\x_{k_n+1}) &\leq -\tau_{k_n}\|\x_{k_n+1}-\x_{k_n}\|^2 +\tau_{k_n}\|\x-\x_{k_n}\|^2 +\langle \bw_{k_n},\x_{k_n+1}-\x \rangle \\
&\hspace{4cm} +2\tau_{k_n}\nu_{k_n}\langle \x_{k_n+1}-\x, \x_{k_n}-\x_{k_n-1}\rangle,
\end{align*}
where $\bw_{k_n}\to \overline{\bw} \in \frac{ g_1(\overline{\x})\partial_L f_1(\overline{\x}) -f_1(\overline{\x})\partial_L g_1(\overline{\x})}{g_1(\overline{\x})^2}$ as $n\to +\infty$. Letting $\x=\overline{\x}$ and $n \to +\infty$, one has
$\liminf_{n \to +\infty} h(\x_{k_n+1}) \geq h(\overline{\x})$, which yields $\lim_{n \to +\infty} h(\x_{k_n+1}) =h(\overline{\x})$ due to the lower semicontinuity of $-h$. By arguing as in \emph{Case~1}, $\overline{\x}$ is a lifted coordinate-wise stationary point of \eqref{eq:prob}.

Finally, if Assumption \ref{a:add} holds, then Lemma~\ref{l:stationary} implies that  $\overline{x}$ is a stationary point for \eqref{eq:prob}. 
\end{proof}

We now comment on the assumptions imposed on the previous theorem. In particular, we see that they are quite general, and are all satisfied by our motivating examples.

\begin{remark}[comments on the assumptions]
\label{remark:parameter}
In addition to Assumption~\ref{a:standing}, we also assume in Theorem~\ref{t:cvg} that the objective function $F$ is bounded from above on the feasible set $S$ and that $\{\x \in S: F(\x) \geq F(\x_0)\}$ is bounded. These assumptions are trivially satisfied in the case when $S$ is a compact set (as in our three motivating examples). More generally, they are also satisfied in the case when $-F$ is a coercive function on the set $S$ (noting that we are considering a maximization formulation), which is a standard assumption in the optimization literature.

Finally, in order to obtain that every cluster point is a stationary point, we also assume that $\limsup_{n \to \infty} \tau_n=\overline{\tau}<+\infty$ and Assumption~\ref{a:add} holds. When $S$ is compact, the first assumption can be easily satisfied with $\tau_n =\delta +\max_{1\leq i\leq m} \{\frac{1}{2}(2y_{i,n}\alpha_i+y_{i,n}^2\beta_i)\}$ and $\overline{\tau} =\delta +\max_{1 \leq i \leq m} \{\frac{\sqrt{M_i}}{m_i} \alpha_i + \frac{M_i}{2m_i^2} \beta_i\}$, where $M_i:=\max_{x_i \in S_i} f_i(x_i)$ and $m_i:=\min_{x \in S_i}g_i(x_i)$. Also, it can be directly verified that Assumption~\ref{a:add} is satisfied by our three motivation examples in the introduction.  
\end{remark}

\begin{remark}[convergence to stronger stationary points]
A close inspection of the proof shows that one can obtain a stronger conclusion in Theorem~\ref{t:cvg} for the cluster point $\overline{\x} =(\overline{x}_1,\dots,\overline{x}_m)$. Indeed, the cluster point $\overline{\x}$ satisfies the following stronger stationarity notion: for each $i\in \{1,\dots,m\}$,
\begin{equation}\label{eq:stronger_stationary}
\overline{x}_i\in \argmin_{x_i\in S_i} \{-h(\overline{x}_1,\dots,\overline{x}_{i-1},x_i,\overline{x}_{i+1},\dots,\overline{x}_m)
+\overline{\tau} \|x_i-\overline{x}_i\|^2 -\langle \overline{w}_i, x_i \rangle\}
\end{equation}
for some $\overline{w}_i \in \frac{g_i(\overline{x}_i) \partial_L f_i(\overline{x}_i)-f_i(\overline{x}_i) \partial_L g_i(\overline{x}_i) }
{g_i(\overline{x}_i)^2}$. This relation implies that $\overline{\x}$ is a lifted coordinate-wise stationary point for \eqref{eq:prob}. Moreover, in the case when $m=1$, and $f_1, g_1$ are continuously differentiable, \eqref{eq:stronger_stationary} reduces to
\begin{equation*}
\overline{\x}\in \prox_{\frac{1}{2\overline{\tau}}(-h+\delta_S)}\left(\overline{\x}+\frac{1}{2\overline{\tau}}\nabla\left(\frac{f_1}{g_1}\right)(\overline{\x})\right),
\end{equation*}
which corresponds to the notion of a \emph{$L$-stationary point} with $L=2\overline{\tau}$ \cite[Definition~4.8]{BH18}, a stronger notion than the usual one of a stationary point.
\end{remark}

We now consider the global convergence of the full sequence generated by Algorithm~\ref{algo:epasa}. 
Recall that a proper lower semicontinuous function $f\colon \mathcal{H}\to \left(-\infty, +\infty\right]$ is said to satisfy the \emph{KL property} \cite{Kur98,Loj63} at $\overline{\x}\in \dom\partial_L f$ if there exist a neighborhood $U$ of $\overline{\x}$, $\eta \in \left(0,+\infty\right]$, and a continuous concave function $\varphi\colon \left[0, \eta\right)\to \mathbb{R}_+$ such that $\varphi(0) =0$, $\varphi$ is continuously differentiable with $\varphi' >0$ on $\left(0, \eta\right)$, and, for all $\x\in U$ with $f(\overline{\x}) <f(\x) <f(\overline{\x})+\eta$, 
\begin{equation*}
\varphi'(f(\x)-f(\overline{\x}))\dist(0,\partial_L f(\x))\geq 1.
\end{equation*}
If $f$ satisfies the KL property at any $\overline{\x}\in \dom\partial_L f$, then it is called a \emph{KL function}. We say that $f$ has the \emph{KL property at $\overline{\x}\in \dom\partial_L f$ with exponent $\alpha$} if it satisfies the KL property at $\overline{\x}\in \dom\partial_L f$ and the corresponding function $\varphi$ (often referred as desingularization function) can be chosen as $\varphi(s) =\gamma s^{1-\alpha}$ for some $\gamma\in \RPP$ and $\alpha\in \left[0,1\right)$. If $f$ is a KL function and has the same exponent $\alpha$ at any $\overline{\x}\in \dom\partial_L f$, then it is called a \emph{KL function with exponent $\alpha$}.

\begin{theorem}[global convergence]
\label{t:global}
Let $(\x_n)_{n\in \mathbb{N}}$ be the sequence generated by Algorithm~\ref{algo:epasa}. Suppose that Assumption~\ref{a:standing} holds, that $F$ is bounded from above on $S$, that the set $\{\x\in S: F(\x) \geq F(\x_0)\}$ is bounded, that, for each $i\in \{1,\dots,m\}$, $f_i$ and $g_i$ are continuously differentiable on an open set containing $S_i$, and that $G(\x,\bu) :=-F(\x) +\delta_S(\x) +\overline{\nu}\|\x-\bu\|^2$ satisfies the KL property at $(\overline{\x},\overline{\x})$ for all $\overline{\x} \in \dom\partial_L (-F+\delta_S)$. Suppose further that $\limsup_{n\to +\infty} \tau_n =\overline{\tau} <+\infty$, that either $m =1$ or $h$ is a differentiable function on an open set containing $S$ whose gradient is Lipschitz continuous on $S$, and that there exist $\varepsilon, \ell\in \RPP$ satisfying
\begin{multline*}
\text{for all~} i\in \{1,\dots,m\},\ \text{for all~} x,x'\in S_i,\\ \|x-x'\|\leq \varepsilon \implies \left\|\nabla\left(\frac{f_i}{g_i}\right)(x)-\nabla\left(\frac{f_i}{g_i}\right)(x')\right\|\leq \ell\|x-x'\|.
\end{multline*}
Then $\displaystyle \sum_{n=0}^{+\infty}\|\x_{n+1}-\x_n\|<+\infty$ and the sequence $(\x_n)_{n\in \mathbb{N}}$ converges to a stationary point $\x^*$ for \eqref{eq:prob}.\\ Moreover, if $G$ satisfies the KL property with exponent $\alpha\in [0,1)$ at $(\overline{\x},\overline{\x})$ for all $\overline{\x} \in \dom\partial_L(-F+\delta_S)$, then exactly one of the following alternatives holds:
\begin{enumerate}
\item
\emph{(Finite convergence)}
$\alpha =0$ and there exists $n_0\in \mathbb{N}$ such that, for all $n\geq n_0$, $\x_n =\x^*$.
\item
\emph{(Linear convergence)}
$\alpha\in (0,\frac{1}{2}]$ and there exist $\gamma\in \RPP$ and $\rho\in \left(0,1\right)$ such that, for all $n\in \mathbb{N}$, $\|\x_n-\x^*\|\leq \gamma\rho^{\frac{n}{2}}$ and $|F(\x_n) -F(\x^*)|\leq \gamma\rho^n$.
\item
\emph{(Sublinear convergence)}
$\alpha\in (\frac{1}{2},1)$ and there exists $\gamma\in \RPP$ such that, for all $n\in \mathbb{N}$, $\|\x_n-\x^*\|\leq \gamma n^{-\frac{1-\alpha}{2\alpha-1}}$ and $|F(\x_n) -F(\x^*)|\leq \gamma n^{-\frac{2-2\alpha}{2\alpha-1}}$.
\end{enumerate}
\end{theorem}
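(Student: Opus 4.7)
The plan is to apply the standard Kurdyka--{\L}ojasiewicz based convergence framework (following, e.g., Attouch--Bolte--Svaiter) to the merit function $G(\x,\bu) = -F(\x) + \delta_S(\x) + \overline{\nu}\|\x-\bu\|^2$ evaluated along the lifted sequence $\z_n := (\x_n, \x_{n-1})$. The three hypotheses to verify are: (H1) a sufficient decrease of $G(\z_{n+1})$ relative to $G(\z_n)$ by a constant multiple of $\|\x_{n+1}-\x_n\|^2$; (H2) a relative error estimate $\dist(0,\partial_L G(\z_{n+1})) \leq C(\|\x_{n+1}-\x_n\| + \|\x_n-\x_{n-1}\|)$; and (H3) a continuity-along-the-sequence property for $G$. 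Properties (H1) and (H3) follow almost directly from Theorem~\ref{t:cvg}: (H1) is a rewriting of part \ref{t:cvg_decrease}, noting that $G(\x_{n+1},\x_n) = -F(\x_{n+1}) + \overline{\nu}\|\x_{n+1}-\x_n\|^2$; and (H3) is obtained by combining parts \ref{t:cvg_seq}--\ref{t:cvg_crit} with the asymptotic regularity $\|\x_{n+1}-\x_n\|\to 0$ established in \ref{t:cvg_seq}, which upgrades any subsequential convergence $\x_{k_n}\to \overline{\x}$ to $\z_{k_n}\to (\overline{\x},\overline{\x})$ with $G(\z_{k_n})\to -F(\overline{\x}) = G(\overline{\x},\overline{\x})$.

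The technical heart of the argument is (H2). The first-order optimality condition for the subproblem defining $x_{i,n+1}$ in Step~\ref{step:main} yields $w_{i,n} - 2\tau_n(x_{i,n+1}-z_{i,n}) \in \partial_L^{x_i}(-h_{i,n+1} + \delta_{S_i})(x_{i,n+1})$. Under the $C^1$ hypothesis on $f_i,g_i$, a direct computation using $y_{i,n} = \sqrt{f_i(x_{i,n})}/g_i(x_{i,n})$ and the quotient rule shows that $w_{i,n} = \nabla(f_i/g_i)(x_{i,n})$, so the assumed local Lipschitz continuity of $\nabla(f_i/g_i)$ with constant $\ell$ controls $\|w_{i,n} - \nabla(f_i/g_i)(x_{i,n+1})\|$ by $\ell\|\x_{n+1}-\x_n\|$ for $n$ large enough. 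In the case $m>1$, the Lipschitz-gradient assumption on $h$ lets us replace the partial gradient of $h$ at the Gauss--Seidel mixed point $(x_{1,n+1},\dots,x_{i,n+1},x_{i+1,n},\dots,x_{m,n})$ by $\nabla_{x_i} h(\x_{n+1})$ modulo an $O(\|\x_{n+1}-\x_n\|)$ error. Summing over $i$ via the separable sum rule of Lemma~\ref{l:calrules}\ref{l:calrules_separable}--\ref{l:calrules_sum} and using $x_{i,n+1}-z_{i,n} = (x_{i,n+1}-x_{i,n}) - \nu_n(x_{i,n}-x_{i,n-1})$, one produces $\xi_{n+1}\in \partial_L(-F+\delta_S)(\x_{n+1})$ with $\|\xi_{n+1}\| \leq C_1(\|\x_{n+1}-\x_n\| + \|\x_n-\x_{n-1}\|)$. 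Since the coupling term in $G$ is smooth and contributes only $(2\overline{\nu}(\x_{n+1}-\x_n),\,-2\overline{\nu}(\x_{n+1}-\x_n))$ to its subdifferential, the element $\xi^G_{n+1} := (\xi_{n+1}+2\overline{\nu}(\x_{n+1}-\x_n),\,-2\overline{\nu}(\x_{n+1}-\x_n))$ lies in $\partial_L G(\z_{n+1})$ and satisfies the required bound.

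With (H1)--(H3) in hand, the standard KL-based argument yields the finite-length property $\sum_n \|\z_{n+1}-\z_n\|<+\infty$, whence $\sum_n \|\x_{n+1}-\x_n\| <+\infty$ and $\x_n \to \x^*$ for some $\x^*\in S$. Since $\partial_L G(\x^*,\x^*)$ equals $\partial_L(-F+\delta_S)(\x^*)\times\{0\}$, passing to the limit in $\xi^G_{n+1}\to 0$ by robustness of $\partial_L$ gives $0\in \partial_L(-F+\delta_S)(\x^*)$, i.e., $\x^*$ is stationary for \eqref{eq:prob}. The three rate alternatives are obtained by the classical KL-exponent analysis applied to the error quantity $E_n := G(\z_n) - G(\x^*,\x^*)\geq 0$: $\alpha = 0$ forces $E_n = 0$ for large $n$ and hence finite termination via (H1); $\alpha\in(0,1/2]$ yields geometric decay of $E_n$, which combined with $\|\x_n-\x^*\|\leq \sum_{k\geq n}\|\x_{k+1}-\x_k\|$ and the recursion $E_n^{2\alpha}\leq C_2(E_n - E_{n+1})$ coming from (H1)--(H2) upgrades to the linear rate $\|\x_n-\x^*\|\leq \gamma\rho^{n/2}$; and $\alpha\in(1/2,1)$ produces the stated polynomial decay. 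The main obstacle is the careful bookkeeping in (H2) for the $m>1$ Gauss--Seidel case, where the mixed-argument evaluations of $\nabla h$ must be reconciled with $\nabla h(\x_{n+1})$ via the Lipschitz-gradient hypothesis; everything else is either direct from Theorem~\ref{t:cvg} or a template application of the abstract KL machinery.
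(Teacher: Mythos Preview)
Your proposal is correct and follows essentially the same route as the paper: verify sufficient decrease of $G$ along $\z_n$ from Theorem~\ref{t:cvg}\ref{t:cvg_decrease}, establish the relative-error bound on $\dist(0,\partial_L G(\z_{n+1}))$ via the optimality condition in Step~\ref{step:main} combined with $w_{i,n}=\nabla(f_i/g_i)(x_{i,n})$, the local Lipschitz hypothesis on $\nabla(f_i/g_i)$, and (for $m>1$) the Lipschitz-gradient assumption on $h$ to handle the Gauss--Seidel mixed arguments, then feed everything into the abstract KL machinery. The only minor difference is that the paper invokes the specific abstract convergence result \cite[Theorem~5.1]{BDL20}, which is tailored to the situation where the decrease is by $\|\x_{n+1}-\x_n\|^2$ while the subgradient bound involves both $\|\x_{n+1}-\x_n\|$ and $\|\x_n-\x_{n-1}\|$; your reference to the ``standard'' Attouch--Bolte--Svaiter framework should be understood as this inertial variant rather than the original version.
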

\begin{proof}
Let $\z_n :=(\x_{n+1},\x_n)$ for $n\in \mathbb{N}$ and $\Omega$ be the set of cluster points of $(\z_n)_{n\in \mathbb{N}}$.
We derive from Theorem~\ref{t:cvg} that the sequence $(\z_n)_{n\in \mathbb{N}}$ in $S \times S$ is bounded, that, for all $n\in \mathbb{N}$,
\begin{equation}\label{eq:decrease'}
G(\z_{n+1})+\alpha \|\x_{n+2}-\x_{n+1}\|^2\leq G(\z_n) \quad\text{with~} \alpha :=\delta -2\overline{\nu} >0,
\end{equation}
and that, for all $\overline{\z}\in \Omega$, one has $\overline{\z} =(\overline{\x},\overline{\x})$ with  $\overline{\x}\in S$ being a stationary point for \eqref{eq:prob} and $F(\x_n) \to F(\overline{\x})$ as $n\to +\infty$.
Therefore, $\overline{\x} \in \dom\partial_L(-F + \delta_S)$ and $G(\z_n) =G(\x_{n+1},\x_n) =-F(\x_{n+1}) +\overline{\nu}\|\x_{n+1}-\x_n\|^2 \to -F(\overline{\x})$ as $n \to +\infty$.

Now, for all $n\in \mathbb{N}$, since $\partial_L G(\z_n) =(\partial_L(-F+\delta_S)(\x_{n+1}) +2\overline{\nu}(\x_{n+1}-\x_n), 2\overline{\nu}(\x_{n}-\x_{n+1}))^\top$,
\begin{align}\label{eq:subdiffG}
&\dist(0,\partial_L G(\z_n)) \\
&\quad =\sqrt{\dist\left(0,\partial_L(-F+\delta_S)(\x_{n+1}) +2\overline{\nu}(\x_{n+1}-\x_n)\right)^2 +(2\overline{\nu})^2\|\x_{n+1}-\x_n\|^2} \notag \\
&\quad \leq \dist\left(0,\partial_L(-F+\delta_S)(\x_{n+1}) +2\overline{\nu}(\x_{n+1}-\x_n)\right) + 2\overline{\nu}\|\x_{n+1}-\x_n\| \notag \\
&\quad \leq \dist(0,\partial_L(-F+\delta_S)(\x_{n+1})) + 4\overline{\nu}\|\x_{n+1}-\x_n\|. \notag
\end{align}
We shall estimate $\dist(0,\partial_L(-F+\delta_S)(\x_{n+1}))$. From Step~\ref{step:main} of Algorithm~\ref{algo:epasa} and noting that $f_i,g_i$ are continuously differentiable on an open set that contains $S_i$, we have, for all $i\in \{1,\dots,m\}$ and all $n\in \mathbb{N}$, that
\begin{align}\label{eqLoop}
0 &\in \partial_L (-h_{i,n+1}+\delta_{S_i})(x_{i,n+1}) +2\tau_n(x_{i,n+1}-z_{i,n}) -w_{i,n} \notag \\
&= \partial_L \left(-h_{i,n+1}-\frac{f_i}{g_i}+\delta_{S_i}\right)(x_{i,n+1}) +2\tau_n(x_{i,n+1}-z_{i,n}) +(w_{i,n+1}-w_{i,n}),
\end{align}
where $w_{i,n} =\frac{ g_i(x_{i,n})\nabla f_i(x_{i,n}) -f_i(x_{i,n})\nabla g_i(x_{i,n})}{(g_i(x_{i,n}))^2} =\nabla\left(\frac{f_i}{g_i}\right)(x_{i,n})$.
Since $\limsup_{n\to +\infty} \tau_n =\overline{\tau} <+\infty$ and, for each $i\in \{1,\dots,m\}$, $\lim_{n\to +\infty} \|x_{i,n+1}-x_{i,n}\| =0$ (by Theorem~\ref{t:cvg}\ref{t:cvg_seq}), there exists $n_0\geq 0$ such that, for all $n\geq n_0$,
\begin{equation*}
\tau_n\leq 2\overline{\tau} \text{~~and~~} \|x_{i,n+1}-x_{i,n}\|\leq \varepsilon.
\end{equation*}
Then, for all $i\in \{1,\dots,m\}$ and all $n\geq n_0$, we derive from $\nu_n\leq \overline{\nu}/\tau_n$ that
\begin{align}\label{eq:xizi}
\|2\tau_n(x_{i,n+1}-z_{i,n})\| &= \|2\tau_n(x_{i,n+1}-x_{i,n}) -2\tau_n\nu_n(x_{i,n}-x_{i,n-1})\| \notag \\
&\leq 4\overline{\tau}\|x_{i,n+1}-x_{i,n}\| +2\overline{\nu}\|x_{i,n}-x_{i,n-1}\|
\end{align}
and from the assumption on $\nabla(\frac{f_i}{g_i})$ that
\begin{equation}\label{eq:wiwi}
\|w_{i,n+1}-w_{i,n}\| =\left\|\nabla\left(\frac{f_i}{g_i}\right)(x_{i,n+1})-\nabla\left(\frac{f_i}{g_i}\right)(x_{i,n})\right\|\leq \ell\|x_{i,n+1}-x_{i,n}\|.
\end{equation}
We split the discussion into the following cases. 

\emph{Case~1:} $h$ is a differentiable function on an open set containing $S$ whose gradient is Lipschitz continuous on $S$ with modulus $\ell_h$. Then, it follows from \eqref{eqLoop} that, for all $i\in \{1,\dots,m\}$ and all $n\in \mathbb{N}$,
\begin{equation*}
0 \in -\nabla h_{i,n+1}(x_{i,n+1}) +\partial_L \left(-\frac{f_i}{g_i}+\delta_{S_i}\right)(x_{i,n+1})  +2\tau_n(x_{i,n+1}-z_{i,n}) +(w_{i,n+1}-w_{i,n}),
\end{equation*}
which yields
\begin{multline*}
-\left[\nabla_{x_i} h(\x_{n+1}) -\nabla h_{i,n+1}(x_{i,n+1})\right] -2\tau_n(x_{i,n+1}-z_{i,n}) -(w_{i,n+1}-w_{i,n}) \\ \in \partial_L^{x_i}(-F+\delta_S)(\x_{n+1}).
\end{multline*}
Combining with \eqref{eq:xizi} and \eqref{eq:wiwi}, we deduce that, for all $n\geq n_0$,
\begin{align*}
&\dist(0,\partial_L(-F+\delta_S)(\x_{n+1})) \\
&\quad \leq \sum_{i=1}^m \dist(0,\partial_L^{x_i}(-F+\delta_S)(\x_{n+1})) \\
&\quad \leq \sum_{i=1}^m \|\nabla_{x_i} h(\x_{n+1}) -\nabla h_{i,n+1}(x_{i,n+1})\| +(4\overline{\tau}+\ell)\sum_{i=1}^m \|x_{i,n+1}-x_{i,n}\| \\ 
&\quad\quad +2\overline{\nu}\sum_{i=1}^m \|x_{i,n}-x_{i,n-1}\| \\
&\quad \leq m\ell_h\|\x_{n+1}-\x_n\| +(4\overline{\tau}+\ell)\sqrt{m}\|\x_{n+1}-\x_n\| +2\overline{\nu}\sqrt{m}\|\x_n-\x_{n-1}\|,
\end{align*}
where the last inequality holds due to the assumption that $\nabla h$ is Lipschitz continuous with modulus $\ell_h$ on $S$. By using \eqref{eq:subdiffG}, there exists $K\in \RPP$ such that, for all $n\geq n_0$,
\begin{equation*}
\dist(0,\partial_L G(\z_n)) \leq K \left(\|\x_{n+1}-\x_n\| +\|\x_n-\x_{n-1}\|\right).
\end{equation*}
The first conclusion then follows by applying \cite[Theorem~5.1]{BDL20} (with $I =\{1,2\}$, $\lambda_1 =\lambda_2 =1/2$, $\Delta_n =2K\|\x_{n+2}-\x_{n+1}\|$, $\alpha_n \equiv \frac{\alpha}{4K^2}>0$, $\beta_n \equiv 1$, and $\varepsilon_n \equiv 0$).

\emph{Case~2:} $m =1$. In this case, we derive from \eqref{eqLoop} that, for all $n\in \mathbb{N}$,
\begin{equation*}
-2\tau_n(\x_{n+1}-\z_n) -(\bw_{n+1}-\bw_n)\in \partial_L(-F+\delta_S)(\x_{n+1}).
\end{equation*}
Thus, \eqref{eq:xizi} and \eqref{eq:wiwi} imply that, for all $n\geq n_0$,
\begin{align*}
\dist(0,\partial_L(-F+\delta_S)(\x_{n+1})) &\leq \|2\tau_n(\x_{n+1}-\z_n)\| +\|\bw_{n+1}-\bw_n\| \\
&\leq (4\overline{\tau}+\ell)\|\x_{n+1}-\x_n\| +2\overline{\nu}\|\x_n-\x_{n-1}\|.
\end{align*}
Proceeding as in \emph{Case~1}, we also obtain the first conclusion.

As the remaining conclusions are rather standard, we omit the proof here and refer the readers to \cite{AB09,BDL20,Laszlo21,LP18}.
\end{proof}

As stated in the preceding theorem, the KL exponent of the merit function for the model problem completely determines the convergence rate of the proposed algorithm. On the other hand, finding or estimating the KL exponent of a nonsmooth and nonconvex function is, in general, highly challenging. Some recent progresses in identifying KL exponents for non-fractional problems can be found in \cite{LP18,YLP22}.  In the next section, we will derive KL exponents of the corresponding merit functions for various classes of structured fractional programming problems.

\section{KL exponents for structured fractional programs}
\label{s:KL}
In this section, we derive the KL exponent of the associated merit functions of three classes of structured fractional programs:
sum-of-ratios fractional quadratic programs with spherical constraints, generalized eigenvalue problems with cardinality regularization and generalized eigenvalue problems with sparsity constraints. In particular, we establish that, for the last two classes of fractional programs, the KL exponent is $1/2$. As a consequence,  the proposed Algorithm~\ref{algo:epasa} exhibits linear convergence for these two classes of fractional programs.

We first see that the KL exponent for the merit function associated with \eqref{eq:prob} can be computed by a merit function associated with the equivalent problem~\eqref{eq:prob1}. To do this, we need the following result from \cite{LP18}.

\begin{lemma}[cf. {\cite[Theorem 3.6]{LP18}}]
\label{l:LP}
Let $f$ be a proper lower semicontinuous function. Suppose that $f$ satisfies the KL property at $\overline{\x} \in \dom\partial_L f$ with exponent $\alpha \in [\frac{1}{2},1)$. Then, for all $\rho \geq 0$, $\widetilde{f}(\x,\bu)=f(\x)+\rho \|\x-\bu\|^2$ satisfies  the KL property with exponent $\alpha$ at $(\overline{\x},\overline{\x})$.
\end{lemma}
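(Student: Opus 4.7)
The plan is to unfold the KL definition directly. Set $\widetilde{f}(\x,\bu) = f(\x) + \rho\|\x-\bu\|^2$ and note $\widetilde{f}(\overline{\x},\overline{\x}) = f(\overline{\x})$; the case $\rho = 0$ is immediate since then $\widetilde{f}$ inherits the KL inequality from $f$, so I will assume $\rho > 0$. The goal is to produce a neighbourhood $U$ of $(\overline{\x},\overline{\x})$, a slack $\eta > 0$, and a constant $\gamma > 0$ such that whenever $(\x,\bu) \in U$ and $0 < \widetilde{f}(\x,\bu) - f(\overline{\x}) < \eta$, the estimate $\dist(0,\partial_L \widetilde{f}(\x,\bu)) \geq \gamma^{-1}(1-\alpha)^{-1}(\widetilde{f}(\x,\bu)-f(\overline{\x}))^{\alpha}$ holds, which is precisely the KL inequality with desingularization $\varphi(s) = \gamma s^{1-\alpha}$.

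First I would compute the subdifferential. Because $(\x,\bu)\mapsto \rho\|\x-\bu\|^2$ is $C^1$-smooth, Lemma~\ref{l:calrules}\ref{l:calrules_sum} yields
\begin{equation*}
\partial_L \widetilde{f}(\x,\bu) = \bigl\{(\xi + 2\rho(\x-\bu),\, -2\rho(\x-\bu)) : \xi \in \partial_L f(\x)\bigr\}.
\end{equation*}
From this I extract two lower bounds. The second coordinate alone gives $\dist(0,\partial_L \widetilde{f}(\x,\bu)) \geq 2\rho\|\x-\bu\|$. For the first coordinate, the elementary inequality $\|a+b\|^2 + \|b\|^2 \geq \tfrac{1}{2}\|a\|^2$ applied with $a = \xi$ and $b = 2\rho(\x-\bu)$ gives $\dist(0,\partial_L \widetilde{f}(\x,\bu))^2 \geq \tfrac{1}{2}\dist(0,\partial_L f(\x))^2$.

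Next I split on the relative sizes of $A := f(\x) - f(\overline{\x})$ and $B := \rho\|\x-\bu\|^2 \geq 0$; note $A+B > 0$ by hypothesis. In the regime $A \geq B$ we have $A > 0$ and $A + B \leq 2A$; after shrinking $U$ so that $\x$ lies in the KL neighbourhood of $\overline{\x}$ for $f$ and $A < \eta$, the KL inequality for $f$ at $\overline{\x}$ gives $\dist(0,\partial_L f(\x)) \geq c^{-1}(1-\alpha)^{-1}A^{\alpha}$, which combined with the first distance bound produces the desired estimate up to a multiplicative constant. In the complementary regime $A < B$ we have $A + B \leq 2B = 2\rho\|\x-\bu\|^2$, hence $(A+B)^{\alpha} \leq 2^{\alpha}\rho^{\alpha}\|\x-\bu\|^{2\alpha}$; shrinking $U$ further so that $\|\x-\bu\| \leq 1$ and using $2\alpha - 1 \geq 0$ (this is exactly where $\alpha \geq 1/2$ enters) gives $\|\x-\bu\|^{2\alpha} \leq \|\x-\bu\|$, so the second distance bound yields $\dist(0,\partial_L \widetilde{f}(\x,\bu)) \geq 2\rho\|\x-\bu\| \geq 2^{1-\alpha}\rho^{1-\alpha}(A+B)^{\alpha}$.

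Combining the two regimes by choosing $\gamma$ as the maximum of the reciprocals of the two constants produced above gives the KL inequality at $(\overline{\x},\overline{\x})$ with exponent $\alpha$. The only genuinely delicate point in the argument is the last inequality $\|\x-\bu\|^{2\alpha} \leq \|\x-\bu\|$, which is where the assumption $\alpha \in [\tfrac{1}{2},1)$ is indispensable: without it, the quadratic perturbation $\rho\|\x-\bu\|^2$ would create a flatter profile near the diagonal than $(A+B)^{\alpha}$ can absorb, and the KL exponent would be forced to degrade to $\max\{\alpha,1/2\}$ rather than remain at $\alpha$. The remaining verifications are routine bookkeeping of constants and neighbourhood sizes.
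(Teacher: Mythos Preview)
The paper does not supply its own proof of this lemma: it is quoted verbatim from \cite[Theorem~3.6]{LP18} and invoked as a black box. Your argument is correct and is essentially the same proof that appears in the cited reference --- compute the subdifferential using the smoothness of the quadratic term, extract the two lower bounds on $\dist(0,\partial_L\widetilde f)$ coming from the $\bu$-component and from the elementary inequality $\|a+b\|^2+\|b\|^2\ge\tfrac12\|a\|^2$, and then split on which of $f(\x)-f(\overline\x)$ or $\rho\|\x-\bu\|^2$ dominates, with the hypothesis $\alpha\ge 1/2$ entering exactly where you use $\|\x-\bu\|^{2\alpha}\le\|\x-\bu\|$ for $\|\x-\bu\|\le 1$.

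One small bookkeeping point worth making explicit: in the case $A\ge B$ you implicitly need $f(\x)<f(\overline\x)+\eta$ to invoke the KL inequality for $f$; this follows automatically once you take the slack for $\widetilde f$ no larger than the slack $\eta$ for $f$, since then $A\le A+B<\eta$. Otherwise the proof is complete.
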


\begin{proposition}
\label{p:bridge}
Suppose that Assumption~\ref{a:standing} holds and that, for each $i\in \{1,\dots,m\}$, $f_i$ and $g_i$ are continuously differentiable on $S_i$.
Let $P(\x,\y) =-h(\x)- H(\x,\y)+\delta_S(\x)$, where $H(\x,\y) = \sum_{i=1}^m \left[2y_i\sqrt{f_i(x_i)}-y_i^2g_i(x_i)\right]$. Let $\overline{\x} =(\overline{x}_1,\dots,\overline{x}_m)\in S$ and  $\overline{\y} =(\overline{y}_1,\dots,\overline{y}_m)\in \mathbb{R}^m$ with $\overline{y_i} =\frac{\sqrt{f_i(\overline{x}_i)}}{g_i(\overline{x}_i)}$. Suppose further that $h$ is continuous around $\overline{\x}$ and that $P$ satisfies the KL property with exponent $\alpha \in [0,1)$ at $(\overline{\x},\overline{\y}) \in S\times \mathbb{R}^m$. Then
\begin{equation*}
\Phi(\x) :=-h(\x)-\sum_{i=1}^m\frac{f_i(x_i)}{g_i(x_i)} +\delta_S(\x)
\end{equation*}
satisfies the KL property with exponent $\alpha$ at $\overline{\x}$. In particular, for all $\rho\geq 0$,
\begin{equation*}
G(\x,\bu):=-h(\x)-\sum_{i=1}^m\frac{f_i(x_i)}{g_i(x_i)}+\rho \|\x-\bu\|^2 +\delta_S(\x)
\end{equation*}
satisfies the KL property with exponent $\alpha' =\max\{\alpha,\frac{1}{2}\}$ at $(\overline{\x},\overline{\x})$.
\end{proposition}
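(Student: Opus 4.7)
The plan is to exploit the identity $\Phi(\x) = \min_{\y \in \mathbb{R}^m} P(\x, \y) = P(\x, \y(\x))$, where
$\y(\x) := \bigl(\sqrt{f_i(x_i)}/g_i(x_i)\bigr)_{i=1}^m$,
and transfer the KL inequality of $P$ at $(\overline{\x}, \overline{\y})$ to $\Phi$ at $\overline{\x}$. The identity follows by completing the square in each strongly concave quadratic $y_i \mapsto 2y_i\sqrt{f_i(x_i)} - y_i^2 g_i(x_i)$, whose maximum $f_i(x_i)/g_i(x_i)$ is attained uniquely at $y_i = \sqrt{f_i(x_i)}/g_i(x_i)$; in particular $\y(\cdot)$ is continuous on $S$ with $\y(\overline{\x}) = \overline{\y}$.

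The central technical step is the one-sided subdifferential inclusion
\begin{equation*}
\partial_L \Phi(\x) \times \{0\} \subseteq \partial_L P(\x, \y(\x)) \quad \text{for all } \x \in S \text{ near } \overline{\x},
\end{equation*}
which immediately yields $\dist(0, \partial_L \Phi(\x)) \geq \dist(0, \partial_L P(\x, \y(\x)))$. I would first establish the Fr\'echet version: for $v \in \widehat{\partial} \Phi(\x_0)$, the inequality $P(\x, \y) \geq \Phi(\x)$ (with equality at $\y = \y(\x)$) gives
\begin{equation*}
\frac{P(\x, \y) - P(\x_0, \y(\x_0)) - \langle v, \x - \x_0 \rangle}{\|(\x - \x_0, \y - \y(\x_0))\|} \geq \frac{\Phi(\x) - \Phi(\x_0) - \langle v, \x - \x_0 \rangle}{\|(\x - \x_0, \y - \y(\x_0))\|},
\end{equation*}
and combining the Fr\'echet condition for $v$ with $\|\x - \x_0\| \leq \|(\x - \x_0, \y - \y(\x_0))\|$ yields $\liminf \geq 0$, proving $(v, 0) \in \widehat{\partial} P(\x_0, \y(\x_0))$. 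The limiting version then follows by robustness of $\partial_L$: approximate $v \in \partial_L \Phi(\x_0)$ by $v_n \in \widehat{\partial} \Phi(\x_n)$ with $\x_n \to \x_0$ and $\Phi(\x_n) \to \Phi(\x_0)$; the Fr\'echet case at each $\x_n$ gives $(v_n, 0) \in \widehat{\partial} P(\x_n, \y(\x_n))$, and passing to the limit using the continuity of $\y(\cdot)$ and $P(\x_n, \y(\x_n)) = \Phi(\x_n) \to \Phi(\x_0) = P(\x_0, \y(\x_0))$ shows $(v, 0) \in \partial_L P(\x_0, \y(\x_0))$.

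With this inclusion and the identity $P(\x, \y(\x)) - P(\overline{\x}, \overline{\y}) = \Phi(\x) - \Phi(\overline{\x})$, the KL transfer is straightforward. For $\x$ in a neighborhood $V$ of $\overline{\x}$ with $\Phi(\overline{\x}) < \Phi(\x) < \Phi(\overline{\x}) + \eta$, continuity of $\y(\cdot)$ ensures $(\x, \y(\x))$ lies in the KL neighborhood $U$ of $(\overline{\x}, \overline{\y})$, so the KL inequality for $P$ gives
\begin{equation*}
\dist(0, \partial_L \Phi(\x)) \geq \dist(0, \partial_L P(\x, \y(\x))) \geq \frac{(\Phi(\x) - \Phi(\overline{\x}))^{\alpha}}{\gamma(1-\alpha)},
\end{equation*}
which is the KL inequality for $\Phi$ at $\overline{\x}$ with exponent $\alpha$ and the same desingularizing constants.

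For $G(\x, \bu) = \Phi(\x) + \rho\|\x - \bu\|^2$, since a KL exponent $\alpha$ locally implies any larger exponent $\alpha' \geq \alpha$ (by boundedness of $\Phi - \Phi(\overline{\x})$ on the KL neighborhood), $\Phi$ has KL exponent $\alpha' := \max\{\alpha, 1/2\} \in [1/2, 1)$ at $\overline{\x}$, and Lemma~\ref{l:LP} applied to $\Phi$ delivers the KL property of $G$ at $(\overline{\x}, \overline{\x})$ with exponent $\alpha'$. The main technical difficulty is the subdifferential inclusion in the second paragraph: although a direct sum-rule argument via Lemma~\ref{l:calrules}\ref{l:calrules_sum} works cleanly in the non-degenerate regime $f_i(x_i) > 0$ (and even yields the equality $\partial_L P(\x, \y(\x)) = \partial_L \Phi(\x) \times \{0\}$ there), the approach via the $P \geq \Phi$ inequality above handles uniformly also the degenerate case $f_i(x_i) = 0$, where $H$ may fail to be smooth in $\x$.
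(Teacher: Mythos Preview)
Your proof is correct and reaches the same conclusion, but the route differs from the paper's in one key step. The paper computes the subdifferential of $P$ at $(\x,\y_\x)$ \emph{explicitly}: using that $f_i,g_i$ are $C^1$ it writes $\partial_L P(\x,\y_\x) = \big(\partial_L(-h+\delta_S)(\x) + \nabla_\x(-H)(\x,\y_\x),\, \nabla_\y(-H)(\x,\y_\x)\big)$, then checks directly that $\nabla_{x_i}(-H_i)(x_i,y_{i,\x}) = -\nabla(f_i/g_i)(x_i)$ and $\nabla_{y_i}(-H_i)(x_i,y_{i,\x}) = 0$, obtaining the \emph{equality} $\partial_L P(\x,\y_\x) = \partial_L\Phi(\x)\times\{0\}$. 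You instead exploit only the structural identity $\Phi(\x)=\min_\y P(\x,\y)=P(\x,\y(\x))$ and prove the one-sided inclusion $\partial_L\Phi(\x)\times\{0\}\subseteq\partial_L P(\x,\y(\x))$ via the minorization $P(\cdot,\cdot)\ge\Phi(\cdot)$ at the Fr\'echet level, then pass to limits. This inclusion is exactly what is needed for the distance inequality, so nothing is lost. Your argument is more elementary (no calculus rules beyond the definitions) and, as you note, handles the degenerate case $f_i(x_i)=0$ uniformly without having to verify that $H$ is strictly differentiable there; the paper's explicit calculation, by contrast, relies on the $C^1$ hypothesis on $f_i,g_i$ to make the sum rule go through. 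Conversely, the paper's computation yields a sharper equality and makes the dependence on the data $(f_i,g_i)$ transparent. The final step via Lemma~\ref{l:LP} is identical in both proofs.
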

\begin{proof}
As $P$ satisfies the KL property with exponent $\alpha \in [0,1)$ at $(\overline{\x},\overline{\y}) \in \mathcal{H} \times \mathbb{R}^m$, there exist $\delta,\eta,c>0$ such that, for all $(\x,\y)$ with  $\|(\x,\y)-(\overline{\x},\overline{\y})\| \leq \delta$ and $P(\overline{\x},\overline{\y})<P(\x,\y)<P(\overline{\x},\overline{\y})+\eta$, one has $\dist(0, \partial_L P(\x,\y)) \geq c\left[P(\x,\y)-P(\overline{\x},\overline{\y}) \right]^{\alpha}$.
It follows that, for all $(\x,\y)$ with  $\|(\x,\y)-(\overline{\x},\overline{\y})\| \leq \delta$ and $P(\x,\y)<P(\overline{\x},\overline{\y})+\eta$,
\begin{equation}\label{eq:KLexp}
\left[\dist(0, \partial_L P(\x,\y))\right]^{\frac{1}{\alpha}} \geq c^{\frac{1}{\alpha}}\left[P(\x,\y)-P(\overline{\x},\overline{\y}) \right].
\end{equation}
Here, we drop the condition  $P(\overline{\x},\overline{\y})<P(\x,\y)$ because \eqref{eq:KLexp} trivially holds otherwise.
For each $\x$, let $\y_\x =(y_{1,\x},\dots,y_{m,\x})$ with $y_{i,\x} =\frac{\sqrt{f_i({x_i})}}{g_i({x_i})}$ for $i\in \{1,\dots,m\}$. Then $\y_{\overline{\x}} =\overline{\y}$. Moreover, by the continuity of $H$ and $h$, there exists $\delta_1 \in (0,\delta)$ such that for all $\x \in S$ with $\|\x-\overline{\x}\| \leq \delta_1$ one has
$\|(\x,\y_{\x})-(\overline{\x},\overline{\y})\| \leq \delta$ and $P(\x,\y_{\x})<P(\overline{\x},\overline{\y})+\eta$. Therefore, from \eqref{eq:KLexp} we derive that, for all $\x \in S$ with $\|\x-\overline{\x}\| \leq \delta_1$,
\begin{align*}
\left[\dist(0, \partial_L P(\x,\y_\x)) \right]^{\frac{1}{\alpha}}
&\geq c^{\frac{1}{\alpha}}\left[-h(\x)-H(\x,\y_{\x}) +h(\overline{\x})+H(\overline{\x},\overline{\y})\right] \\
&= c^{\frac{1}{\alpha}}\left[-h(\x)-\sum_{i=1}^m\frac{f_i(x_i)}{g_i(x_i)} +h(\overline{\x})+\sum_{i=1}^m\frac{f_i(\overline{x}_i)}{g_i(\overline{x}_i)} \right].
\end{align*}
Now, we notice that $\partial_L P(\x,\y_\x) =\left(\partial_L (-h+\delta_S)(\x)+\partial_L^\x (-H)(\x,\y_\x), \partial_L^\y (-H)(\x,\y_\x)\right)$ and that, for each $i\in \{1,\dots,m\}$,
\begin{align*}
\partial_L^{x_i} (-H_i)(x_i,y_{i,\x}) &=\frac{-g_i(x_i)\nabla f_i(x_i) +f_i(x_i)\nabla g_i(x_i)}{[g_i(x_i)]^2} =\nabla\left(\frac{f_i}{g_i}\right)(x_i) \\ 
\text{and~} \partial_L^{y_i} (-H_i)(x_i,y_{i,\x}) &=0.
\end{align*}
Therefore, $\partial_L P(\x,\y_\x) =(\partial_L \Phi(\x), 0)$, and from here we deduce that, for all $\x \in S$ with $\|\x-\overline{\x}\| \leq \delta_1$,
\begin{equation*}
\left[\dist(0,\partial_L \Phi(\x))\right]^{\frac{1}{\alpha}} \geq c^{\frac{1}{\alpha}}\left[\Phi(\x)-\Phi(\overline{\x})\right] .
\end{equation*}
So, $\Phi$ satisfies the KL property with exponent $\alpha$ at $\overline{\x}$, and hence, also with exponent $\max\{\alpha,\frac{1}{2}\}$. By using Lemma~\ref{l:LP}, $G$ satisfies the KL property with exponent $\max\{\alpha,\frac{1}{2}\}$ at $(\overline{\x},\overline{\x})$.
\end{proof}

\subsection{Sum-of-ratios fractional quadratic programs with spherical constraint}

We now consider the following sum-of-ratios fractional quadratic program
\begin{equation}
\max_{\x =(x_1,\dots,x_m) \in \mathbb{R}^d} \x^\top A_0\x+ \ba_0^\top \x+\sum_{i=1}^m \frac{x_i^\top A_ix_i}{x_i^\top B_ix_i}
\quad\text{s.t.}\quad \|x_i\|=1, \ i \in \{1,\dots,m\},
\tag{FQP}
\end{equation}
where $A_0$ is a symmetric matrix and, for each $i\in \{1, \dots, m\}$, $A_i$ and $B_i$ are positive definite matrices. In the special cases of $m=1$ and  $\ba_0=0$, this reduces to the problem of maximizing the sum of a quadratic function and the Rayleigh quotient over the unit
sphere (motivating Example~\ref{ex:Rayleigh}). For this sum-of-ratios fractional  quadratic program, the corresponding merit function for the proposed Algorithm~\ref{algo:epasa} takes the form
\begin{equation*}
\widehat{\Phi}_{FQP}(\x,\bu) =-\left[\x^\top A_0\x +\ba_0^\top \x\right] -\sum_{i=1}^m \frac{x_i^\top A_ix_i}{x_i^\top B_ix_i} +\delta_{\Lambda_1\times \dots \times \Lambda_m}(\x) +\rho\|\x-\bu\|^2,
\end{equation*}
where $\Lambda_i =\{x_i\in \mathbb{R}^{d_i}: \|x_i\|=1\}$, $i\in \{1,\dots,m\}$, and $\rho \geq 0$. We shall investigate the KL exponent of this merit function. To this end, we use a fundamental result which provides an exponent estimate in the classical \L ojasiewicz gradient inequality for polynomials. 

\begin{lemma}[\L ojasiewicz gradient inequality {\cite[Theorem~4.2]{AK05}}]
\label{l:LGI}
Let $f$ be a polynomial on $\mathbb{R}^d$ with degree $p\in\mathbb{N}$. Suppose that $f(\overline{\x})=0$. Then there exist constants $\varepsilon, c>0$ such that, for all $\x\in\mathbb{R}^d$ with $\|\x-\overline{\x}\|\leq \varepsilon$, we have
\begin{equation}\label{loj}
\|\nabla f(\x)\|\geq c|f(\x)|^{1-\tau},
\text{~~where~~} \tau =\mathcal{R}(d,p)^{-1} \text{~~and~~}
\mathcal{R}(d,p) :=\begin{cases}
1 &\text{if~~} p =1, \\
p(3p-3)^{d-1} &\text{if~~} p\geq 2.
\end{cases}
\end{equation}
\end{lemma}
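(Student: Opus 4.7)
The plan is to follow the semi-algebraic approach of D'Acunto and Kurdyka. The qualitative content — existence of \emph{some} exponent $\tau>0$ for which \eqref{loj} holds in a neighborhood of $\overline{\x}$ — is a direct consequence of the classical \L{}ojasiewicz inequality: both $\|\nabla f\|$ and $|f|$ are continuous semi-algebraic functions sharing a zero at $\overline{\x}$, and any continuous semi-algebraic function vanishing where another one does is bounded below by a power of the latter on compact sets. The real work lies in pinning down the arithmetic bound $\tau^{-1} \leq \mathcal{R}(d,p)$.

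First I would reduce to $\overline{\x}=0$ by translation and dispose of the degenerate cases. When $p=1$, $\nabla f$ is a constant vector: either it is nonzero (and the inequality holds globally with $\tau=1$) or $f\equiv 0$ (and both sides of \eqref{loj} vanish). Hence I may assume $p\geq 2$.

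The core of the argument is a contradiction driven by the curve selection lemma. Suppose $\tau := \mathcal{R}(d,p)^{-1}$ does not admit any valid pair $(\varepsilon,c)$. Then for each $n\in\mathbb{N}$ the semi-algebraic set
\begin{equation*}
A_{n} := \bigl\{ \x \in \mathbb{R}^{d} : \|\x\| \leq 1/n \ \text{and}\ \|\nabla f(\x)\|^{2} < (1/n)\,|f(\x)|^{2(1-\tau)} \bigr\}
\end{equation*}
has $0$ as an accumulation point, and the curve selection lemma produces a real analytic arc $\gamma \colon [0,\eta) \to \mathbb{R}^{d}$ with $\gamma(0)=0$ along which $\|\nabla f(\gamma(t))\| = o\bigl(|f(\gamma(t))|^{1-\tau}\bigr)$ as $t\downarrow 0$. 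Expanding $f\circ\gamma$ and $\nabla f\circ\gamma$ as Puiseux series in $t$ and exploiting the chain rule $(f\circ\gamma)'(t) = \langle \nabla f(\gamma(t)), \gamma'(t)\rangle$ forces a relation between the leading Puiseux exponents which contradicts the choice of $\tau$ — provided $\tau^{-1}$ exceeds the maximum order of contact that such an extremal arc can achieve at the origin.

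The main obstacle, and the source of the explicit bound $p(3p-3)^{d-1}$, is controlling that maximum order of contact. The arc $\gamma$ can be chosen to lie on a real algebraic curve cut out by $d-1$ polynomial relations between the coordinates of $\nabla f$ and auxiliary Lagrange-type multipliers; these relations have degree $O(p)$, and iterated Bezout / Thom--Milnor bounds on the number of branches of such curves contribute the $(3p-3)^{d-1}$ factor, while the extra factor of $p$ records the degree of $f$ itself in the Puiseux order comparison. Turning this heuristic into the precise formula $\mathcal{R}(d,p)=p(3p-3)^{d-1}$ is exactly the quantitative bookkeeping carried out in \cite{AK05}, to which I would ultimately defer for the effective estimates.
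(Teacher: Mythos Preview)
The paper does not supply a proof of this lemma: it is quoted verbatim as \cite[Theorem~4.2]{AK05} and used as a black box. There is therefore nothing in the paper to compare your proposal against.

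Your sketch is a reasonable high-level outline of the D'Acunto--Kurdyka argument itself --- translation to the origin, trivial disposal of $p=1$, curve selection into the ``bad'' set, and a Puiseux-order contradiction whose quantitative side comes from Bezout-type degree bounds on the extremal curve. As a proof \emph{of the lemma} it is incomplete in the same place you acknowledge: the passage from ``iterated Bezout / Thom--Milnor bounds contribute the $(3p-3)^{d-1}$ factor'' to the exact formula $\mathcal{R}(d,p)=p(3p-3)^{d-1}$ is the entire technical content of \cite{AK05}, and you defer to that reference rather than carry it out. For the purposes of this paper that is exactly what is wanted --- the lemma is a citation, not a result to be re-proved --- so simply invoking \cite[Theorem~4.2]{AK05} is both sufficient and what the authors do.
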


\begin{theorem}
Let $\Lambda =\Lambda_1\times \dots \times \Lambda_m$, where $\Lambda_i =\{x_i \in \mathbb{R}^{d_i}: \|x_i\|=1\}$, $i\in \{1,\dots,m\}$, with $\sum_{i=1}^m d_i=d$. Consider
\begin{equation*}
\Phi(\x) =-\left[\x^\top A_0\x+\ba_0^\top \x\right] -\sum_{i=1}^m \frac{x_i^\top A_ix_i}{x_i^\top B_ix_i} +\delta_{\Lambda}(\x),
\end{equation*}
where $A_0$ is a symmetric matrix and, for each $i\in \{1, \dots, m\}$, $A_i$ and $B_i$ are positive definite matrices. Then $\Phi$ satisfies the KL property with exponent $1-\tau$, where $\tau = \left[\mathcal{R}(d+3m+md,4)\right]^{-1}$. In particular, for all $\rho \geq 0$,
\begin{equation*}
\widehat{\Phi}_{FQP}(\x,\bu) =-\left[\x^\top A_0\x+\ba_0^\top \x\right] -\sum_{i=1}^m \frac{\x^\top A_i\x}{\x^\top B_i\x} +\delta_{\Lambda}(\x) +\rho \|\x-\bu\|^2
\end{equation*}
satisfies the KL property with exponent $1-\tau$ at $(\overline{\x},\overline{\x})$ for all $\overline{\x} \in \dom\partial_L \Phi$.
\end{theorem}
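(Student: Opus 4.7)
My strategy is to combine the reduction offered by Proposition~\ref{p:bridge} with the \L ojasiewicz gradient inequality for polynomials (Lemma~\ref{l:LGI}), via a further lifting that eliminates the square roots appearing in the non-fractional reformulation. First, the statement for $\widehat{\Phi}_{FQP}$ follows from the KL property of $\Phi$ by Lemma~\ref{l:LP}, since $1-\tau \geq 1/2$; so the core task is to prove that $\Phi$ satisfies KL with exponent $1-\tau$ at every $\overline{\x}\in\dom\partial_L\Phi$. Applying Proposition~\ref{p:bridge} with the smooth polynomial data $f_i(x_i)=x_i^\top A_i x_i$, $g_i(x_i)=x_i^\top B_i x_i$, and $h(\x)=\x^\top A_0\x+\ba_0^\top\x$, this reduces to verifying that the lifted non-fractional merit function
\[
P(\x,\y)=-[\x^\top A_0\x+\ba_0^\top\x]-\sum_{i=1}^m\bigl[2y_i\sqrt{x_i^\top A_i x_i}-y_i^2 x_i^\top B_i x_i\bigr]+\delta_\Lambda(\x)
\]
has the KL property with exponent $1-\tau$ at each $(\overline{\x},\overline{\y})\in\Lambda\times\mathbb{R}^m$ with $\overline{y}_i=\sqrt{\overline{x}_i^\top A_i \overline{x}_i}/(\overline{x}_i^\top B_i\overline{x}_i)$.

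The obstacle to applying Lemma~\ref{l:LGI} to $P$ directly is the square root. Since $A_i\succ 0$ and $\|\overline{x}_i\|=1$, one has $\overline{x}_i^\top A_i \overline{x}_i\geq\lambda_{\min}(A_i)>0$, so in a neighborhood of $(\overline{\x},\overline{\y})$ I would introduce auxiliary variables $\mathbf{t}=(t_1,\ldots,t_m)$ with $t_i\geq 0$ and the polynomial equations $t_i^2=x_i^\top A_i x_i$. The substitution $t_i\leftrightarrow\sqrt{x_i^\top A_i x_i}$ then turns $P$ locally into the degree-$4$ polynomial
\[
\widetilde P(\x,\y,\mathbf{t})=-[\x^\top A_0\x+\ba_0^\top\x]-\sum_{i=1}^m\bigl[2y_i t_i - y_i^2 x_i^\top B_i x_i\bigr]
\]
constrained to the semi-algebraic set cut out by $\|x_i\|^2=1$ and $t_i^2=x_i^\top A_i x_i$. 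I would then encode the stationarity (KKT) system for this polynomial problem as the vanishing of a single nonnegative polynomial $F$, obtained as a sum of squares of the KKT residuals, with Lagrange multipliers $\blambda\in\mathbb{R}^m$ for the sphere constraints, $\bmu\in\mathbb{R}^m$ for the $\mathbf{t}$-constraints, and a further block of auxiliary multipliers totalling $md$ scalars required to close the coordinate-wise KKT equations in $\x$. The resulting $F$ lives in $d+3m+md$ variables and has summands of degree at most $4$, so Lemma~\ref{l:LGI} yields the \L ojasiewicz exponent $1-[\mathcal{R}(d+3m+md,4)]^{-1}=1-\tau$.

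The final step is to convert the \L ojasiewicz inequality for $F$ into the KL inequality $\dist(0,\partial_L P(\x,\y))\geq c\,[P(\x,\y)-P(\overline{\x},\overline{\y})]^{1-\tau}$ near $(\overline{\x},\overline{\y})$, whence Proposition~\ref{p:bridge} transports the exponent back to $\Phi$. Here Lemma~\ref{l:subdiff}\ref{l:subdiff_sphere} is essential, since it identifies $\partial_L\delta_{\Lambda_i}(x_i)=\{t x_i:t\in\mathbb{R}\}$, so the generic subgradient of $\delta_\Lambda$ corresponds exactly to the scalar sphere multipliers $\lambda_i$ that feature in $F$. I expect the main obstacle to be precisely this bookkeeping: one must parametrize the dual variables $(\mathbf{t},\blambda,\bmu,\mathbf{z})$ as continuous (indeed semi-algebraic) functions of $(\x,\y)$ on a neighborhood of the reference point, using the positivity of $x_i^\top A_i x_i$ and the local nondegeneracy of the equality constraints, so that the \L ojasiewicz bound for $F$ descends to a genuine KL inequality for $P$ with the same exponent $1-\tau$. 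This ``semi-algebraic error bound'' reduction from constrained polynomial optimization is the technical heart of the proof and is where the exact dimension count $d+3m+md$ is justified.
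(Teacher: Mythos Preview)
Your overall architecture---reduce to $P$ via Proposition~\ref{p:bridge}, then lift to a polynomial and invoke Lemma~\ref{l:LGI}---is exactly the paper's route, and the use of Lemma~\ref{l:LP} for $\widehat{\Phi}_{FQP}$ is also correct. The gap lies in how you build the polynomial and how you connect its \L ojasiewicz inequality back to the KL inequality for $P$.

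Your proposal to take $F$ as the \emph{sum of squares of the KKT residuals} does not work: \L ojasiewicz applied to such an $F$ yields $\|\nabla F\|\geq c|F|^{1-\tau}$, which bounds the gradient of the residual norm, not $\dist(0,\partial_L P)$ against $P-P(\overline{\x},\overline{\y})$. What the paper does instead is apply \L ojasiewicz to the \emph{Lagrangian} itself. Concretely, with $L_i=A_i^{1/2}$ it introduces vectors $u_i$ on the unit sphere (so that $(L_ix_i)^\top u_i=\|L_ix_i\|=\sqrt{x_i^\top A_i x_i}$) and multipliers $\lambda_i$ for $\|u_i\|^2=1$ and $\mu_i$ for $\|x_i\|^2=1$, forming
\[
f(\x,\y,\bu,\blambda,\bmu)=-f_0(\x)-\sum_i\bigl[y_i(L_ix_i)^\top u_i-y_i^2x_i^\top B_ix_i\bigr]+\sum_i\lambda_i(\|u_i\|^2-1)+\sum_i\mu_i(\|x_i\|^2-1),
\]
a degree-$4$ polynomial in the $(d+m)+md+m+m=d+3m+md$ variables. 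The key (and elementary) verification is that at the explicit parametrization $u_i=L_ix_i/\|L_ix_i\|$, $\lambda_i=\tfrac{1}{2}y_i\|L_ix_i\|$, $\mu_i=\tfrac{1}{2}t_{x_i,y_i}$ (with $t_{x_i,y_i}$ the minimizer arising from the sphere normal cone via Lemma~\ref{l:subdiff}\ref{l:subdiff_sphere}), one has simultaneously $f=P$, $\nabla_{u_i}f=\nabla_{\lambda_i}f=\nabla_{\mu_i}f=0$, and $(\nabla_{x_i}f,\nabla_{y_i}f)$ equal to the distance-realizing element of $\partial_L P$. Hence $\|\nabla f\|=\dist(0,\partial_L P)$ at this point, and Lemma~\ref{l:LGI} for $f$ gives the KL inequality for $P$ directly---no ``semi-algebraic error bound reduction'' is needed.

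Two further remarks on your bookkeeping. First, the $md$ block in the dimension count is the lift $\bu$ itself, not ``auxiliary multipliers to close the KKT equations''; the only multipliers are $\blambda,\bmu\in\mathbb{R}^m$. Second, your scalar lift $t_i^2=x_i^\top A_ix_i$ would in fact lead to a degree-$4$ Lagrangian in only $d+4m$ variables (with $\lambda_i=y_i/t_i$ making both $\nabla_{t_i}f=0$ and the $x_i$-gradient match), so it gives a \emph{better} exponent than stated---but it does not reproduce the announced $d+3m+md$, which comes from the vector lift.
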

\begin{proof}
From Proposition~\ref{p:bridge} with $S =\Lambda$, $h(\x) =\x^\top A_0\x+\ba_0^\top \x$, $f_i(x_i) =x_i^\top A_ix_i$, and $g_i(x_i) =x_i^\top B_ix_i$, $i \in \{1, \dots, m\}$, it suffices to show that
\begin{equation*}
P(\x,\y) =-\left[\x^\top A_0\x+\ba_0^\top \x\right] -\sum_{i=1}^m \left[y_i\sqrt{x_i^\top A_ix_i}-y_i^2x_i^\top B_ix_i\right] +\delta_{\Lambda}(\x)
\end{equation*}
satisfies the KL property with exponent $1-\tau$ at $(\overline{\x},\overline{\y}) \in \Lambda \times \mathbb{R}^m$. To do this, let $(\overline{\x},\overline{\y}) \in \Lambda\times \mathbb{R}^m$ and let $\delta,\eta>0$ be such that, for all
$\|(\x,\y)-(\overline{\x},\overline{\y})\| \leq \delta$, one has $P(\overline{\x},\overline{\y}) <P(\x,\y) <P(\overline{\x},\overline{\y})+\eta$. Let $L_i =A_i^{1/2}$ for $i \in \{1, \dots, m\}$. We can write $P$ as
\begin{equation*}
P(\x,\y) =-\left[\x^\top A_0\x+\ba_0^\top \x\right] -\sum_{i=1}^m \left[y_i\|L_ix_i\|-y_i^2x_i^\top B_ix_i\right] +\sum_{i=1}^m \delta_{\Lambda_i}(x_i).
\end{equation*}
As $A_i$ is positive definite, we have $L_ix_i \neq 0$ for all $x_i \in \Lambda_i$ and all $i \in \{1, \dots, m\}$.
Let $f_0(\x) =\x^\top A_0\x+\ba_0^\top \x$. Then, for all $i \in \{1, \dots, m\}$,
\begin{align*}
\partial_L^{x_i} P(\x,\y) &=\left\{-\nabla_{x_i} f_0(\x) -y_i\frac{A_ix_i}{\sqrt{x_i^\top A_ix_i}} +2y_i^2B_ix_i +t_ix_i: t_i\in \mathbb{R}\right\} \\
&\hspace{5.5cm}\text{(Using Lemma~\ref{l:subdiff}\ref{l:subdiff_sphere})} \\
\text{and~} \partial_L^{y_i} P(\x,\y) &=-\|L_ix_i\| +2y_i x_i^\top B_ix_i,
\end{align*}
which imply that
\begin{multline*}
\dist(0,\partial_L P(\x,\y))^2\! =\!\sum_{i=1}^m \inf_{t_i\in \mathbb{R}} \!\left\{\left\|-\nabla_{x_i} f_0(\x) -y_i\frac{A_ix_i}{\sqrt{x_i^\top A_ix_i}} +2y_i^2B_ix_i +t_ix_i\right\|^2\!\right\} \\ +\sum_{i=1}^m (-\|L_ix_i\|+2y_i x_i^\top B_ix_i)^2.
\end{multline*}
For all $(\x,\y)\in \Lambda \times \mathbb{R}^m$ and all $i\in \{1,\dots,m\}$, one has $\|x_i\|=1$, and so,
\begin{align*}
&\left\|-\nabla_{x_i} f_0(\x) -y_i\frac{A_ix_i}{\sqrt{x_i^\top A_ix_i}} +2y_i^2B_ix_i +t_ix_i\right\|^2 \\
&\quad =\left\|-\nabla_{x_i} f_0(\x) -y_i\frac{A_ix_i}{\sqrt{x_i^\top A_ix_i}} +2y_i^2B_ix_i\right\|^2 \\
&\qquad +2t_ix_i^\top \Big(-\nabla_{x_i} f_0(\x) -y_i\frac{A_ix_i}{\sqrt{x_i^\top A_ix_i}} +2y_i^2B_ix_i\Big) +t_i^2,
\end{align*}
from which we have
\begin{multline}\label{eq:olala}
\dist(0,\partial_L P(\x,\y))^2 =\sum_{i=1}^m \left\|-\nabla_{x_i} f_0(\x) -y_i\frac{A_ix_i}{\sqrt{x_i^\top A_ix_i}} +2y_i^2B_ix_i +t_{x_i,y_i}x_i\right\|^2 \\ +\sum_{i=1}^m (-\|L_ix_i\|+2y_i x_i^\top B_ix_i)^2,
\end{multline}
where $t_{x_i,y_i} := x_i^\top \big(\nabla_{x_i} f_0(\x)\big) +y_i\sqrt{x_i^\top A_ix_i} -2y_i^2x_i^\top B_ix_i$.

Now, let us consider $f\colon \mathbb{R}^d \times \mathbb{R}^{m} \times \mathbb{R}^{md} \times \mathbb{R}^m \times \mathbb{R}^m\to \mathbb{R}$ defined by
\begin{multline*}
f(\x,\y,\bu,\blambda,\bmu) =-f_0(\x) -\sum_{i=1}^m \left[y_i(L_ix_i)^\top u_i -y_i^2x_i^\top B_ix_i\right] \\ +\sum_{i=1}^m\lambda_i(\|u_i\|^2-1)+\sum_{i=1}^m\mu_i(\|x_i\|^2-1),
\end{multline*}
and let $\widehat{f} =f -r$, where $r =f(\overline{\x},\overline{\y},\overline{\bu},\overline{\blambda},\overline{\bmu})$ with $\overline{u}_i =\frac{L_i\overline{x}_i}{\|L_i\overline{x}_i\|}$, $\overline{\lambda}_i =\frac{\overline{y}_i\|L_i\overline{x}_i\|}{2}$ and $\overline{\mu}_i =\frac{t_{\overline{x}_i,\overline{y}_i}}{2}$ for all $i \in \{1, \dots, m\}$.
Clearly, $\widehat{f}$ is a polynomial on $\mathbb{R}^{d+3m+md}$ of degree $4$. By 
Lemma~\ref{l:LGI}, there exist $\delta_0 >0$ and $c>0$ such that, for all $(\x,\y,\bu,\blambda,\bmu)$ with $\|(\x,\y,\bu,\blambda,\bmu)-(\overline{\x},\overline{\y},\overline{\bu},\overline{\blambda},\overline{\bmu})\| \leq \delta_0$,
\begin{align*}
\|\nabla f(\x,\y,\bu,\blambda,\bmu)\| &=\|\nabla \widehat{f}(\x,\y,\bu,\blambda,\bmu)\|\geq c|\widehat{f}(\x,\y,\bu,\blambda,\bmu)|^{1-\tau} \\
&=c|f(\x,\y,\bu,\blambda,\bmu)-f(\overline{\x},\overline{\y},\overline{\bu},\overline{\blambda},\overline{\bmu})|^{1-\tau},
\end{align*}
where $\tau = \left[\mathcal{R}(d+3m+md,4)\right]^{-1}$.
Let $\bu_{\x} =(u_{1,\x},\dots,u_{m,\x})$, $\blambda_{\x,\y} =(\lambda_{1,\x,\y},\dots,\lambda_{m,\x,\y})$, and $\bmu_{\x,\y} =(\mu_{1,\x,\y},\dots,\mu_{m,\x,\y})$ with, for all $i \in \{1, \dots, m\}$, 
\begin{equation*}
u_{i,\x} =\frac{L_ix_i}{\|L_ix_i\|},\quad
\lambda_{i,\x,\y} =\frac{y_i\|L_ix_i\|}{2},\quad\text{and~}
\mu_{i,\x,\y} =\frac{t_{x_i,y_i}}{2}.
\end{equation*}
Shrinking $\delta>0$ if necessary, we can assume that, for all $(\x,\y) \in \Lambda \times \mathbb{R}^m$ with $\|(\x,\y)-(\overline{\x},\overline{\y})\| \leq \delta$,
\begin{equation*}
\|(\x,\y,\bu_{\x},\blambda_{\x,\y},\bmu_{\x,\y})-(\overline{\x},\overline{\y},\overline{\bu},\overline{\blambda},\overline{\bmu})\| \leq \delta_0,
\end{equation*}
which implies $\|\nabla f(\x,\y,\bu_{\x},\blambda_{\x,\y},\bmu_{\x,\y})\|^2 \geq c^2|f(\x,\y,\bu_{\x},\blambda_{\x,\y},\bmu_{\x,\y})-f(\overline{\x},\overline{\y},\overline{\bu},\overline{\blambda},\overline{\bmu})|^{2(1-\tau)}$.
Note that, for all $i \in \{1, \dots, m\}$,
\begin{equation*}
\begin{cases}
\nabla_{x_i} f(\x,\y,\bu,\blambda,\bmu) &=-\nabla_{x_i} f_0(\x) -(y_i L_i^\top u_i-2y_i^2B_ix_i) +2\mu_ix_i, \\
\nabla_{y_i} f(\x,\y,\bu,\blambda,\bmu) &=-(L_ix_i)^\top u_i +2y_ix_i^\top B_ix_i, \\
\nabla_{u_i} f(\x,\y,\bu,\blambda,\bmu) &=-y_iL_ix_i +2\lambda_i u_i, \\
\nabla_{\lambda_i} f(\x,\y,\bu,\blambda,\bmu) &=\|u_i\|^2-1, \\
\nabla_{\mu_i} f(\x,\y,\bu,\blambda,\bmu) &=\|x_i\|^2-1.
\end{cases}
\end{equation*}
Direct verification shows that,  for all $(\x,\y) \in \Lambda \times \mathbb{R}^m$ with $\|(\x,\y)-(\overline{\x},\overline{\y})\| \leq \delta$ and all $i \in \{1, \dots, m\}$, one has
\begin{equation*}
\begin{cases}
\nabla_{x_i} f(\x,\y,\bu_{\x},\blambda_{\x,\y},\bmu_{\x,\y})&= -\nabla_{x_i} f(\x) -\left (y_i\frac{A_ix_i}{\sqrt{x_i^\top A_ix_i}}-2y_i^2B_i\x_i \right) +t_{x_i,y_i}x_i, \\
\nabla_{y_i} f(\x,\y,\bu_{\x},\blambda_{\x,\y},\bmu_{\x,\y}) &=-\|L_ix_i\| +2y_ix_i^\top B_ix_i, \\
\nabla_{u_i} f(\x,\y,\bu_{\x},\blambda_{\x,\y},\bmu_{\x,\y}) &=0, \\
\nabla_{\lambda_i} f(\x,\y,\bu_{\x},\blambda_{\x,\y},\bmu_{\x,\y}) &=0, \\
\nabla_{\mu_i} f(\x,\y,\bu_{\x},\blambda_{\x,\y},\bmu_{\x,\y}) &=0
\end{cases}
\end{equation*}
and also
\begin{equation*}
f(\x,\y,\bu_{\x},\blambda_{\x,\y},\bmu_{\x,\y}) =P(\x,\y) \text{~~and~~}
f(\overline{\x},\overline{\y},\overline{\bu},\overline{\blambda},\overline{\bmu}) =P(\overline{\x},\overline{\y}).
\end{equation*}
These together with \eqref{eq:olala} imply that, for all $(\x,\y) \in \Lambda \times \mathbb{R}^m$ with $\|(\x,\y)-(\overline{\x},\overline{\y})\| \leq \delta$ and $P(\overline{\x},\overline{\y})<P(\x,\y)<P(\overline{\x},\overline{\y})+\eta$,
\begin{equation*}
\dist(0,\partial_L P(\x,\y)) \geq c\left|P(\x,\y)-P(\overline{\x},\overline{\y})\right|^{1-\tau} =c\left[P(\x,\y)-P(\overline{\x},\overline{\y})\right]^{1-\tau}.
\end{equation*}
Thus, $P$ satisfies the KL property with exponent $1-\tau$, and the conclusion follows.
\end{proof}

\subsection{Generalized eigenvalue problem with cardinality regularization}

Consider the generalized eigenvalue problem with cardinality regularization
\begin{equation}
\max_{\x \in \mathbb{R}^d} \frac{\x^\top A_1\x}{\x^\top B_1\x}- \lambda \|\x\|_0 
\quad\text{s.t.}\quad \|\x\|=1,
\tag{GEP}
\end{equation}
where $A_1,B_1$ are symmetric matrices such that $A_1$ is positive semidefinite and $B_1$ is positive definite, and $\lambda>0.$ For this generalized eigenvalue problem with cardinality regularization the corresponding merit function for the proposed Algorithm~\ref{algo:epasa} takes the form
\begin{equation*}
\widehat{\Phi}_{GEP}(\x,\bu) =\frac{\x^\top A \x}{\x^\top B \x} +\lambda \|\x\|_0 +\delta_{\Lambda}(\x) +\rho \|\x-\bu\|^2,
\end{equation*}
with $A=-A_1$ a symmetric matrix,  $B=B_1$ is a positive definite matrix, $\Lambda =\{\x\in \mathbb{R}^d: \|\x\|=1\}$, and $\rho \geq 0$. Below, we derive the KL exponent of the merit function $\widehat{\Phi}_{GEP}$. To this end, we will use the following lemma from \cite{LP18}. Here we provide an alternative short proof for it.
\begin{lemma}
\label{l:bound}
Let $Q$ be a symmetric $d \times d$ matrix. Then there exists $c>0$ such that, for all $\x \in \mathbb{R}^d$, $\|Q\x\|^2 \geq c\, (\x^\top Q\x)$.
\end{lemma}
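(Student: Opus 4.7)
The plan is to reduce the inequality to a one-dimensional statement via the spectral decomposition of $Q$. Since $Q$ is symmetric, I would write $Q =U\operatorname{diag}(\lambda_1,\dots,\lambda_d)U^\top$ with $U$ orthogonal and $\lambda_1,\dots,\lambda_d\in\mathbb{R}$ the eigenvalues. Substituting $\y =U^\top \x$, both quantities become separable:
\begin{equation*}
\|Q\x\|^2 =\sum_{i=1}^d \lambda_i^2 y_i^2 \quad\text{and}\quad \x^\top Q\x =\sum_{i=1}^d \lambda_i y_i^2,
\end{equation*}
so the claim becomes $\sum_{i=1}^d \lambda_i(\lambda_i-c)y_i^2 \geq 0$ for all $\y\in\mathbb{R}^d$.

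Next, I would pick $c$ as follows. If $Q$ has no positive eigenvalues, then $\x^\top Q\x \leq 0 \leq \|Q\x\|^2$, and any $c>0$ works (e.g.\ $c =1$). Otherwise, set $c :=\min\{\lambda_i : \lambda_i >0\}>0$. Then I would verify termwise that $\lambda_i(\lambda_i-c)\geq 0$: for $\lambda_i \geq c$, the factors are both nonnegative; for $\lambda_i =0$, the term vanishes; and for $\lambda_i <0$, both factors are negative, so the product is nonnegative. Summing these inequalities with the nonnegative weights $y_i^2$ yields the desired estimate.

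There is no real obstacle here; the only subtle point is making sure the sign analysis covers all possibilities for the eigenvalues (including the degenerate case where $Q$ has no positive eigenvalues), which is handled by the case split above. Equivalently, and more compactly, one can note that the inequality $\|Q\x\|^2 \geq c\,\x^\top Q\x$ is the quadratic form associated with the matrix $Q^2 - cQ$, and $Q^2 - cQ$ is positive semidefinite precisely when every eigenvalue $\lambda$ of $Q$ satisfies $\lambda(\lambda-c)\geq 0$, which the above choice of $c$ guarantees.
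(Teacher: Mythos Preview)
Your proposal is correct and follows essentially the same route as the paper: diagonalize $Q$ via the spectral theorem, reduce to a sum over eigenvalues, and then choose $c$ so that each term $\lambda_i(\lambda_i - c)y_i^2$ is nonnegative. The only cosmetic difference is the choice of constant --- the paper takes $c = \min\{|\lambda_j| : \lambda_j \neq 0\}$ and bounds $c\lambda_j \leq c|\lambda_j| \leq \lambda_j^2$ directly, whereas you take $c = \min\{\lambda_i : \lambda_i > 0\}$ (with a separate trivial case when no eigenvalue is positive); your choice is slightly sharper but the argument is the same.
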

\begin{proof}
Let $Q=U^\top \Sigma U$ where $U$ is an orthonormal matrix and $\Sigma={\rm diag}(\lambda_1,\dots,\lambda_n)$ is a diagonal matrix whose diagonal elements are the eigenvalues of $Q$ with $\lambda_1 \leq \lambda_2 \leq \dots \leq \lambda_n$. Let $\x \in \mathbb{R}^d$, $\y:=U\x$, and 
$I_0=\{j:\lambda_j \neq 0\}$. Then $\x^\top Q\x= \sum_{j=1}^N \lambda_j y_j^2 =\sum_{j \in I_0} \lambda_j y_j^2$ and  
\begin{equation*}
\|Q\x\|^2=\x^\top (Q^\top Q)\x= (U\x)^\top \Sigma^2 (U\x)= \sum_{j=1}^N \lambda_j^2 y_j^2=\sum_{j \in I_0} \lambda_j^2 y_j^2.
\end{equation*}
Setting $c :=\min\{|\lambda_j|: j \in I_0\}$, we see that
\begin{equation*}
c\, (\x^\top Q\x)= \sum_{j \in I_0} c\lambda_j y_j^2 \leq \sum_{j \in I_0} c|\lambda_j| y_j^2 \leq \sum_{j \in I_0} |\lambda_j|^2 y_j^2=\|Q\x\|^2,
\end{equation*}
which completes the proof.
\end{proof}

Next we prove that the KL exponent of the merit function $\widehat{\Phi}_{GEP}$ is $\frac{1}{2}$. To do this, for an index set $J=\{j_1,\dots,j_k\} \subseteq \{1,\dots,d\}$ with $k \leq d$, we denote $\x_J:=(x_{j_1},\dots,x_{j_k})$. Moreover, for two index sets $I,J$, we denote
$A_{IJ}=(A_{ij})_{i \in I, j \in J}.$

\begin{theorem}
\label{t:KL_0norm_reg}
Consider the function
$\Phi(\x)=\frac{\x^\top A\x}{\x^\top B\x}+ \lambda \|\x\|_0 + \delta_{\Lambda}(\x)$,
where $\Lambda=\{\x: \|\x\|=1\}$, $A,B$ are symmetric matrices with $B$ positive definite, and $\lambda>0$. Then $\Phi$ is a KL function with exponent $\frac{1}{2}$. In particular, for all $\rho \geq 0$,
\begin{equation*}
\widehat{\Phi}_{GEP}(\x,\bu)= \frac{\x^\top A\x}{\x^\top B\x}+ \lambda \|\x\|_0 + \delta_{\Lambda}(\x)+ \rho \|\x-\bu\|^2
\end{equation*}
satisfies the KL property with exponent $\frac{1}{2}$ at $(\overline{\x},\overline{\x})$ for all $\overline{\x} \in \dom\partial_L \Phi$ .
\end{theorem}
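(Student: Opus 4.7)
The plan is to reduce the KL inequality at an arbitrary $\overline{\x}\in \dom\partial_L\Phi$ to a local analysis that exploits the discrete nature of $\|\cdot\|_0$. Fix such $\overline{\x}$, set $J := \supp(\overline{\x})$ (nonempty since $\|\overline{\x}\|=1$), and $r := \frac{\overline{\x}^\top A\overline{\x}}{\overline{\x}^\top B\overline{\x}}$, so that $\Phi(\overline{\x}) = r + \lambda|J|$. Since $\overline{x}_j \neq 0$ for $j \in J$, every $\x$ sufficiently close to $\overline{\x}$ satisfies $\supp(\x) \supseteq J$; continuity of the quotient then lets one choose a small neighbourhood $U$ of $\overline{\x}$ and $\eta \in (0,\lambda/2)$ such that the condition $\Phi(\x) < \Phi(\overline{\x}) + \eta$ rules out $\|\x\|_0 > |J|$. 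Hence in the KL analysis I may assume $\x \in U \cap \Lambda$ with $\supp(\x) = J$, giving $\x_{J^c} = 0$ and $\Phi(\x) - \Phi(\overline{\x}) = r_\x - r$, where $r_\x := \frac{\x^\top A\x}{\x^\top B\x}$.

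On this slice, combining the smooth-plus-nonsmooth sum rule from Lemma~\ref{l:calrules}\ref{l:calrules_sum} with Lemma~\ref{l:subdiff}\ref{l:subdiff_l0},\ref{l:subdiff_sphere},\ref{l:subdiff_sum} yields
\begin{equation*}
\partial_L \Phi(\x) \;=\; \frac{2(A - r_\x B)\x}{\x^\top B\x} + \{\lambda\bv : v_j = 0 \ \forall j \in J\} + \{t\x : t \in \mathbb{R}\}.
\end{equation*}
The freedom in $\bv_{J^c}$ is used to cancel every component of $\bu \in \partial_L\Phi(\x)$ indexed by $J^c$, and because $\x^\top(A-r_\x B)\x = 0$ by definition of $r_\x$, the inner product $\langle [(A-r_\x B)\x]_J, \x_J\rangle$ vanishes, so the optimal multiplier $t$ is also zero. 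A direct minimisation therefore leaves
\begin{equation*}
\dist(0,\partial_L\Phi(\x))^2 \;=\; \frac{4\,\|(A_{JJ} - r_\x B_{JJ})\x_J\|^2}{(\x_J^\top B_{JJ}\x_J)^2}.
\end{equation*}

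The decisive step is then to bound $\|(A_{JJ} - r_\x B_{JJ})\x_J\|^2$ from below by a linear multiple of $r_\x - r$. I would apply Lemma~\ref{l:bound} to the \emph{frozen} symmetric matrix $Q := A_{JJ} - r B_{JJ}$ (crucially independent of $\x$), obtaining $\|Q\x_J\|^2 \geq c_0\,\x_J^\top Q\x_J = c_0\,(r_\x - r)(\x_J^\top B_{JJ}\x_J)$ whenever $r_\x \geq r$. Using the splitting $Q\x_J = (A_{JJ} - r_\x B_{JJ})\x_J + (r_\x - r) B_{JJ}\x_J$ together with $\|a+b\|^2 \leq 2\|a\|^2 + 2\|b\|^2$ gives $\|(A_{JJ} - r_\x B_{JJ})\x_J\|^2 \geq \tfrac{1}{2}\|Q\x_J\|^2 - (r_\x - r)^2\|B_{JJ}\x_J\|^2$. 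Since $r_\x - r \to 0$ as $\x \to \overline{\x}$, shrinking $U$ further makes the quadratic error $(r_\x - r)^2$ negligible compared with $r_\x - r$, and combined with the uniform bounds $\x_J^\top B_{JJ}\x_J \geq \lambda_{\min}(B) > 0$ and $\|B_{JJ}\x_J\| \leq \|B\|$ on $\Lambda$, this yields $\dist(0,\partial_L\Phi(\x))^2 \geq c'(\Phi(\x) - \Phi(\overline{\x}))$, which is the KL inequality with exponent $\tfrac{1}{2}$; the assertion for $\widehat{\Phi}_{GEP}$ then follows from Lemma~\ref{l:LP}. The main obstacle is precisely the temptation to apply Lemma~\ref{l:bound} to the $\x$-dependent matrix $A_{JJ} - r_\x B_{JJ}$: doing so degenerates because $\x_J^\top(A_{JJ} - r_\x B_{JJ})\x_J \equiv 0$. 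The workaround is to measure against the frozen matrix $Q = A_{JJ} - r B_{JJ}$ and absorb the $(r_\x - r)$-perturbation into a first-order term, which is possible exactly because a square-root rate is the target.
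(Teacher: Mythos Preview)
Your proof is correct and follows essentially the same route as the paper: restrict to the support slice $J=\supp(\overline{\x})$, compute $\dist(0,\partial_L\Phi(\x))$ via the subdifferential formulas of Lemma~\ref{l:subdiff}, apply Lemma~\ref{l:bound} to the \emph{frozen} matrix $A_{JJ}-rB_{JJ}$, and absorb the $(r_\x-r)$-perturbation as a lower-order term before invoking Lemma~\ref{l:LP}. The only cosmetic difference is that the paper controls the perturbation via the ordinary triangle inequality on norms rather than your squared-norm estimate $\|a+b\|^2\leq 2\|a\|^2+2\|b\|^2$, but the two are interchangeable here.
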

\begin{proof}
Take any $\overline{\x} \in \dom\partial_L \Phi$. Then $\overline{\x}  \in \Lambda$.
Let $J=\supp(\overline{\x})$ and use $|J|$ to denote the cardinality of $J$.  Choose $\eta \in (0,1)$ such that, for all $\|\x-\overline{\x}\| < \eta$,
\begin{equation*}
\left|\, \frac{\overline{\x}^\top A\overline{\x}}{\overline{\x}^\top B\overline{\x}}-\frac{\x^\top A\x}{\x^\top B\x}\, \right| \leq \frac{\lambda}{4} \text{ and } \eta< \frac{\lambda}{4}.
\end{equation*} Let $\x$ with $\|\x-\overline{\x}\|<\eta$ and  $\Phi(\overline{\x})<\Phi(\x)<\Phi(\overline{\x})+\eta$.
We first see that, by shrinking $\eta$ if necessary, one can assume that
\begin{equation*}
J=\supp(\overline{\x})=\supp(\x).
\end{equation*}
Indeed, by continuity and by shrinking $\eta$ if necessary, one has $\supp(\overline{\x})\subseteq \supp(\x)$. Suppose that $\supp(\overline{\x}) \subsetneq \supp(\x)$. Then $\|\x\|_0>\|\overline{\x}\|_0$, and so,
$\|\x\|_0 \geq \|\overline{\x}\|_0+1$.
From our choice of $\x$, one has $\x \in \Lambda$ and
\begin{equation*}
\frac{\overline{\x}^\top A\overline{\x}}{\overline{\x}^\top B\overline{\x}}+ \lambda \|\overline{\x}\|_0< \frac{\x^\top A\x}{\x^\top B\x}+ \lambda \|\x\|_0 <\frac{\overline{\x}^\top A\overline{\x}}{\overline{\x}^\top B\overline{\x}}+ \lambda \|\overline{\x}\|_0 +\eta. \end{equation*}
This shows that
$\|\x\|_0<\frac{1}{2}+ \|\overline{\x}\|_0$,
which is impossible.

Using Lemma~\ref{l:subdiff} and noting that $J =\supp(\x)$, we derive that
\begin{multline*}
\partial_L \Phi(\x) \subseteq \Bigg\{\frac{2A\x(\x^\top B\x)-2B\x(\x^\top A\x)}{(\x^\top B\x)^2} +\lambda \bv +t \x:  t \in \mathbb{R}, \\
v_j=0 \text{~if~} j \in J, \text{~and~} v_j \in \mathbb{R} \text{~if~} j \notin J\Bigg\}.
\end{multline*}
Denoting $[a]_J=(a_j)_{j \in J} \in \mathbb{R}^{|J|}$, this implies that
\begin{equation*}
\dist(0, \partial_L \Phi(\x)) \geq \inf_{t \in \mathbb{R}} \left\|\left[\frac{2A\x(\x^\top B\x)-2B\x(\x^\top A\x)}{(\x^\top B\x)^2}\right]_J +t \x_J\right\|.
\end{equation*}
A direct verification shows that
\begin{equation*}
\x^\top  \left( \frac{2A\x(\x^\top B\x)-2B\x(\x^\top A\x)}{(\x^\top B\x)^2}\right) =0,
\end{equation*}
which, together with $J=\supp(\x)$, implies that
\begin{equation*}
(\x_J)^\top \left( \left[\frac{2A\x(\x^\top B\x)-2B\x(\x^\top A\x)}{(\x^\top B\x)^2}\right]_J\right) =0.
\end{equation*}
Therefore,
\begin{equation*}
\dist(0, \partial_L \Phi(\x)) \geq \left\|\left[\frac{2A\x(\x^\top B\x)-2B\x(\x^\top A\x)}{(\x^\top B\x)^2}\right]_J\right\|
=\frac{2}{\x^\top B\x} \left\| [A\x]_J -\frac{\x^\top A\x}{\x^\top B\x} [B\x]_J\right\|.
\end{equation*}
Using $J=\supp(\x)$ again, we have that
\begin{equation*}
[A\x]_J =A_{JJ}\x_J,\; \x^\top A\x =(\x_J)^\top A_{JJ}\x_J,\; [B\x]_J= B_{JJ}\x_J, \text{~and~} \x^\top B\x=(\x_J)^\top B_{JJ}\x_J,
\end{equation*}
and hence $\dist(0, \partial_L \Phi(\x))
\geq \frac{2}{\x^\top B\x} \left\| A_{JJ}\x_J -\frac{\x^\top A\x}{\x^\top B\x} 
B_{JJ}\x_J\right\|$.

Now, let $q(\z) =\z^\top A_{JJ}\z -\frac{\overline{\x}^\top A\overline{\x}}{\overline{\x}^\top B\overline{\x}}\, \z^\top B_{JJ}\z$ for $\z \in \mathbb{R}^{|J|}$.
Then
\begin{align*}
\dist(0, \partial_L \Phi(\x))
& \geq \frac{2}{\x^\top B\x} \left\| A_{JJ}\x_J -  \frac{\x^\top A\x}{\x^\top B\x} 
B_{JJ}\x_J\right\| \\
& \geq \frac{2}{\x^\top B\x} \left(\left\| A_{JJ}\x_J -  \frac{\overline{\x}^\top A\overline{\x}}{\overline{\x}^\top B\overline{\x}} 
B_{JJ}\x_J\right\| - \left|\frac{\x^\top A\x}{\x^\top B\x}-\frac{\overline{\x}^\top A\overline{\x}}{\overline{\x}^\top B\overline{\x}}
\right| \, \|B_{JJ}\x_J\|\right) \\
& = \frac{1}{\x^\top B\x} \|\nabla q (\x_J)\| -\frac{2}{\x^\top B\x}|\Phi(\x)-\Phi(\overline{\x})|\, \|B_{JJ}\x_J\|,
\end{align*}
where the second inequality follows from the triangle inequality and the last equality holds as $\x, \overline{\x} \in \Lambda$ and $J=\supp(\x)=\supp(\overline{\x})$ (and so, $\|\x\|_0=\|\overline{\x}\|_0$).

From Lemma \ref{l:bound}, there exists $c>0$ such that, for all $\z$,
$\|\nabla q(\z)\|^2 \geq c\, q(\z)$.
Indeed, one can set $c :=\min_{1 \leq j \leq |J|} \left \{4|\lambda_j(A_{JJ}-\frac{\overline{\x}^\top A\overline{\x}}{\overline{\x}^\top B\overline{\x}}B_{JJ})|: \lambda_j(A_{JJ}-\frac{\overline{\x}^\top A\overline{\x}}{\overline{\x}^\top B\overline{\x}}B_{JJ}) \neq 0 \right\} $, where $\lambda_j(Q)$ are the eigenvalues of a matrix $Q$. Noting that
\begin{align*}
\frac{q(\x_J)}{\x^\top B\x} &= \left[(\x_J)^\top A_{JJ}(\x_J)-\frac{\overline{\x}^\top A\overline{\x}}{\overline{\x}^\top B\overline{\x}}(\x_J)^\top B_{JJ}(\x_J)\right] \frac{1}{\x^\top B\x} \notag \\
& = \left[\x^\top A\x - \frac{\overline{\x}^\top A\overline{\x}}{\overline{\x}^\top B\overline{\x}} \x^\top B\x \right] \frac{1}{\x^\top B\x} \notag \\
& = \frac{\x^\top A\x}{\x^\top B\x}-\frac{\overline{\x}^\top A\overline{\x}}{\overline{\x}^\top B\overline{\x}} =\Phi(\x)-\Phi(\overline{\x})>0,
\end{align*}
one has
\begin{align*}
\dist(0, \partial_L \Phi(\x))
&\geq \frac{\sqrt{c} q(\x_J)^{1/2}}{\x^\top B\x} -2|\Phi(\x)-\Phi(\overline{\x})|\frac{\|B_{JJ}\x_J\|}{\x^\top B\x} \\
& = [\Phi(\x)-\Phi(\overline{\x})]^{1/2} \left(\frac{\sqrt{c}}{\sqrt{\x^\top B\x}} -2[\Phi(\x)-\Phi(\overline{\x})]^{1/2}\frac{\|B_{JJ}\x_J\|}{\x^\top B\x}\right).
\end{align*}
Let 
$c_1 :=\min\{\sqrt{\x^\top B\x}: \x \in \Lambda\}$ and $c_2 :=\max\{\sqrt{\x^\top B\x}: \x \in \Lambda\}$. By shrinking $\eta$ if necessary, we can assume that $\eta \in (0,1)$ and
\begin{equation*}
2[\Phi(\x)-\Phi(\overline{\x})]^{1/2}\frac{\|B_{JJ}\x_J\|}{\x^\top B\x}\leq 2\eta^{1/2}\frac{\|B_{JJ}\x_J\|}{c_1}\leq \frac{\sqrt{c}}{2c_2},
\end{equation*}
where the first inequality follows by the fact $\Phi(\overline{\x})<\Phi(\x)<\Phi(\overline{\x})+\eta$. Then, we see that
\begin{equation*}
\dist(0, \partial_L \Phi(\x))
\geq [\Phi(\x)-\Phi(\overline{\x})]^{1/2} \left(\frac{\sqrt{c}}{c_2} -\frac{\sqrt{c}}{2c_2}\right) =\frac{\sqrt{c}}{2c_2}[\Phi(\x)-\Phi(\overline{\x})]^{1/2}.
\end{equation*}
Thus, $\Phi$ satisfies the KL property with exponent $\frac{1}{2}$. This shows that, according to Lemma~\ref{l:LP},
$\widehat{\Phi}_{GEP}$ satisfies the KL property with exponent $\frac{1}{2}$ at $\overline{\x}$ for all $\overline{\x} \in \dom\partial_L \Phi$.
\end{proof}

\subsection{Generalized eigenvalue problem with sparsity constraint}

Consider the generalized eigenvalue problem with sparsity constraint
\begin{equation}
\max_{x\in \mathbb{R}^d} \frac{\x^\top A_1\x}{\x^\top B_1\x} 
\quad\text{s.t.}\quad \|\x\|=1, \, \|\x\|_0 \leq r,
\tag{GEPS}
\end{equation}
where $A_1,B_1$ are symmetric matrices such that $A_1$ is positive semidefinite and $B$ is positive definite, and $r>0.$ For this generalized eigenvalue problem with sparsity constraint, the corresponding merit function for the proposed Algorithm~\ref{algo:epasa} takes the form
\begin{equation*}
\widehat{\Phi}_{GEPS}(\x,\bu)=\frac{\x^\top A \x}{\x^\top B \x}+ \delta_{\Lambda \cap C_r}(\x)+ \rho \|\x-\bu\|^2,
\end{equation*}
where $A=-A_1$ is a symmetric matrix, $B=B_1$ is a positive definite matrix, $\Lambda =\{\x\in \mathbb{R}^d: \|\x\|=1\}$, $C_r =\{\x\in \mathbb{R}^d: \|\x\|_0 \leq r\}$ with $r>0$, and $\rho \geq 0$. Below, we investigate the KL exponent for this merit function.

\begin{theorem}\label{th:5.8}
Consider the function
$\Phi(\x) =\frac{\x^\top A\x}{\x^\top B\x}+ \delta_{\Lambda \cap C_r}(\x)$, 
where $\Lambda =\{\x\in \mathbb{R}^d: \|\x\|=1\}$, $C_r =\{\x\in \mathbb{R}^d: \|\x\|_0 \leq r\}$ and $A,B$ are symmetric matrices with $B$ positive definite. Then $\Phi$ is a KL function with exponent $\frac{1}{2}$. In particular, for all $\rho \geq 0$,
\begin{equation*}
\widehat{\Phi}_{GEPS}(\x,\bu) =\frac{\x^\top A\x}{\x^\top B\x} +\delta_{\Lambda \cap C_r}(\x) +\rho \|\x-\bu\|^2
\end{equation*}
satisfies the KL property with exponent $\frac{1}{2}$ at $(\overline{\x},\overline{\x})$ for all $\overline{\x} \in {\rm dom}\,  \partial_L \Phi$.
\end{theorem}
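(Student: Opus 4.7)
The plan is to mimic Theorem~\ref{t:KL_0norm_reg}: first establish that $\Phi$ itself is a KL function with exponent $\tfrac{1}{2}$ at every $\overline{\x}\in\dom\partial_L\Phi$, then invoke Lemma~\ref{l:LP} (whose hypothesis $\tfrac{1}{2}\in[\tfrac{1}{2},1)$ is satisfied) to transfer this to the perturbed merit function $\widehat{\Phi}_{GEPS}$ at $(\overline{\x},\overline{\x})$. Fix $\overline{\x}\in\dom\partial_L\Phi\subseteq\Lambda\cap C_r$ and set $J:=\supp(\overline{\x})$, $\mu:=R(\overline{\x})$, where $R(\x):=\x^\top A\x/\x^\top B\x$. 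By continuity of each coordinate, for $\x\in\Lambda\cap C_r$ close enough to $\overline{\x}$ the support $J_\x:=\supp(\x)$ satisfies $J\subseteq J_\x\subseteq\{1,\dots,d\}$ with $|J_\x|\le r$; crucially, $J_\x$ ranges over a \emph{finite} family of index sets, so it suffices to produce a KL inequality whose constant depends only on $J_\x$, and then take a uniform minimum.

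I would run the subdifferential calculation as follows. By Lemma~\ref{l:calrules}\ref{l:calrules_sum} (applied to the smooth $R$ plus the indicator of $\Lambda\cap C_r$) combined with Lemma~\ref{l:subdiff}\ref{l:subdiff_sum'},
\[
\partial_L\Phi(\x)\subseteq\nabla R(\x)+\partial_L\delta_{C_r}(\x)+\{t\x:t\in\mathbb{R}\}.
\]
In both sub-cases of Lemma~\ref{l:subdiff}\ref{l:subdiff_ball}, every $\bv\in\partial_L\delta_{C_r}(\x)$ vanishes on $J_\x$, so any element $\xi=\nabla R(\x)+\bv+t\x$ of the right-hand side satisfies $\xi_{J_\x}=[\nabla R(\x)]_{J_\x}+t\x_{J_\x}$. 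The Rayleigh-quotient identity $\x^\top\nabla R(\x)=0$ together with $\supp(\x)\subseteq J_\x$ gives $\x_{J_\x}^\top[\nabla R(\x)]_{J_\x}=0$ and $\|\x_{J_\x}\|=1$, whence $\|\xi\|^2\ge\|\xi_{J_\x}\|^2=\|[\nabla R(\x)]_{J_\x}\|^2+t^2\ge\|[\nabla R(\x)]_{J_\x}\|^2$, i.e. $\dist(0,\partial_L\Phi(\x))\ge\|[\nabla R(\x)]_{J_\x}\|$.

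The remainder is the quadratic estimate. Setting $q_{J_\x}(\z):=\z^\top(A_{J_\x J_\x}-\mu B_{J_\x J_\x})\z$, the inclusion $\supp(\x)\subseteq J_\x$ gives
\[
q_{J_\x}(\x_{J_\x})=\x^\top A\x-\mu\x^\top B\x=(R(\x)-\mu)\,\x^\top B\x>0,
\]
which forces $A_{J_\x J_\x}-\mu B_{J_\x J_\x}\ne 0$ and hence supplies a strictly positive constant $c_{J_\x}$ from Lemma~\ref{l:bound} with $\|\nabla q_{J_\x}(\x_{J_\x})\|^2\ge c_{J_\x}\, q_{J_\x}(\x_{J_\x})$. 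Writing $A-R(\x)B=(A-\mu B)-(R(\x)-\mu)B$, restricting to the $J_\x$-block, and running the triangle-inequality argument at the end of Theorem~\ref{t:KL_0norm_reg} yield $\|[\nabla R(\x)]_{J_\x}\|\ge c'_{J_\x}[R(\x)-\mu]^{1/2}$ after shrinking the neighborhood of $\overline{\x}$ to dominate the lower-order correction $(R(\x)-\mu)\|[B\x]_{J_\x}\|$. Taking the minimum of $c'_{J_\x}$ over the finite family of eligible $J_\x$ produces a single constant $c>0$ with $\dist(0,\partial_L\Phi(\x))\ge c\,[\Phi(\x)-\Phi(\overline{\x})]^{1/2}$, i.e. the KL property of exponent $\tfrac{1}{2}$ for $\Phi$; Lemma~\ref{l:LP} then finishes the theorem.

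The main obstacle is the second sub-case of Lemma~\ref{l:subdiff}\ref{l:subdiff_ball}, in which $|J_\x|<r$ and $\partial_L\delta_{C_r}(\x)$ becomes a union over auxiliary index sets $\widehat{J}\subseteq\{1,\dots,d\}\setminus J_\x$ of size $r-|J_\x|$. This is absorbed cleanly by the observation that every element of the union still vanishes on $J_\x$, so the $J_\x$-coordinate projection used to lower-bound $\|\xi\|$ is insensitive to $\widehat{J}$ (any $\widehat{J}$-dependent components contribute only as non-negative additive corrections). The other mild subtlety, that the constants $c_{J_\x}$ could in principle vanish for some $J_\x$, is automatically resolved by the KL condition itself: any $J_\x$ that actually supports an $\x$ with $R(\x)>\mu$ forces $A_{J_\x J_\x}-\mu B_{J_\x J_\x}\ne 0$, and the remaining $J_\x$ give $\Phi(\x)=\Phi(\overline{\x})$ and are excluded from the range of the KL inequality.
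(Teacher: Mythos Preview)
Your proof is correct and follows essentially the same strategy as the paper's: decompose $\partial_L\Phi(\x)$ via Lemmas~\ref{l:calrules} and~\ref{l:subdiff}, project onto a coordinate block containing $\supp(\x)$ to lower-bound $\dist(0,\partial_L\Phi(\x))$, apply Lemma~\ref{l:bound} to the resulting quadratic form, absorb the $(R(\x)-\mu)$ correction by the triangle inequality, uniformize the constants over a finite family of index sets, and conclude with Lemma~\ref{l:LP}. Your execution is in fact a bit cleaner than the paper's, which splits into the cases $\|\overline{\x}\|_0=r$ and $\|\overline{\x}\|_0<r$ and in the latter projects onto sets $J\supseteq J_\x$ with $|J|=r$ (carrying an extra infimum over the auxiliary $\widehat{J}$), whereas you uniformly project onto $J_\x=\supp(\x)$ and thereby avoid both the case split and that infimum.
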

\begin{proof}
Take any $\overline{\x} \in \Lambda \cap C_r$. We split the proof into two cases: $\|\overline{\x}\|_0=r$ and $\|\overline{\x}\|_0<r$.

\emph{Case~1:} $\|\overline{\x}\|_0=r$. Let $\delta>0$ and take any $\x \in \Lambda \cap C_r$ with $\|\x-\overline{\x}\| \leq \delta$. By shrinking $\delta$ if necessary, we have
$\supp(\overline{\x}) \subseteq \supp({\x})$. So, $\|\x\|_0 \geq \|\overline{\x}\|_0=r$. As $x \in C_r$, we see that  $\|\x\|_0=\|\overline{\x}\|_0=r$ and so, $\supp(\overline{\x}) = \supp({\x})$. Then, a similar line of argument as in Theorem~\ref{t:KL_0norm_reg} gives the desired conclusion.

\emph{Case~2:} $\|\overline{\x}\|_0<r$. Let $\mathcal{I}=\{I \subseteq \{1,\dots,n\}: \supp(\overline{\x}) \subseteq I\}$. Clearly, $|\mathcal{I}|<+\infty$. Let $\delta>0$ and take any $\x \in \Lambda \cap C_r$ with $\|\x-\overline{\x}\| \leq \delta$. By shrinking $\delta$ if necessary, we have
$\supp(\overline{\x}) \subseteq \supp({\x})$, and so, $J_{\x}:=\supp({\x}) \in \mathcal{I}$. Let $\x$ with $\|\x-\overline{\x}\|<\eta$ and  $\Phi(\overline{\x})<\Phi(\x)<\Phi(\overline{\x})+\eta$.
From our choice of $\x$, one has $\x \in \Lambda$.
Moreover, using Lemma~\ref{l:subdiff}, a direct computation gives us that
\begin{multline*}
\partial_L \Phi(\x) \subseteq \Bigg\{\frac{2A\x(\x^\top B\x)-2B\x(\x^\top A\x)}{(\x^\top B\x)^2}+\lambda \bv + t \x :  t \in \mathbb{R}, \  \widehat{J} \subseteq \{1,\dots,n\}\smallsetminus J_{\x}, \\
|\widehat{J}|=r-|J_{\x}|,\, v_i=0 \text{ if } i \in J_{\x} \cup \widehat{J}, \, \text{and} \,   v_i \in \mathbb{R} \text{ if } i \notin \supp(\x) \cup \widehat{J} \Bigg\}.
\end{multline*}
It follows from $\x^\top \left(\frac{2A\x(\x^\top B\x)-2B\x(\x^\top A\x)}{(\x^\top B\x)^2}\right)=0$ that
\begin{equation*}
\x_{J_{\x} \cup \widehat{J}}^\top \left( \left[\frac{2A\x(\x^\top B\x)-2B\x(\x^\top A\x)}{(\x^\top B\x)^2}\right]_{J_{\x} \cup \widehat{J}}\right)=0.
\end{equation*}
Thus,
\begin{align*}
\dist(0, \partial_L \Phi(x))
&\geq \inf_{t \in \mathbb{R}, \widehat{J} \subseteq \{1,\dots,n\}\smallsetminus J_{\x},\, |\widehat{J}|=r-J_{\x}} \left\|\left[\frac{2A\x(\x^\top B\x)-2B\x(\x^\top A\x)}{(\x^\top B\x)^2}+ t \x \right]_{J_{\x} \cup \widehat{J}}\right\| \\
&= \inf_{\widehat{J} \subseteq \{1,\dots,n\}\smallsetminus J_{\x}, \, |\widehat{J}|=r-J_{\x}} \left\|\left[\frac{2A\x(\x^\top B\x)-2B\x(\x^\top A\x)}{(\x^\top B\x)^2}\right]_{J_{\x} \cup \widehat{J}}\right\| \\
&= \inf_{J \supseteq J_{\x}, |J|=r} \left\|\left[\frac{2A\x(\x^\top B\x)-2B\x(\x^\top A\x)}{(\x^\top B\x)^2}\right]_{J}\right\|.
\end{align*}

Using a similar line of argument as in Theorem~\ref{t:KL_0norm_reg}, one has
\begin{equation*}
\dist(0, \partial_L \Phi(x)) \geq \inf_{J \supseteq J_{\x}, |J|=r} \left\{\frac{1}{\x^\top B\x} \|\nabla q_J (\x_J)\| -\frac{2}{\x^\top B\x}|\Phi(\x)-\Phi(\overline{\x})| \, \|B_{JJ}\x_J\|\right\},
\end{equation*}
where $q_J(\z) =\z^\top A_{JJ}\z- \frac{\overline{\x}^\top A\overline{\x}}{\overline{\x}^\top B\overline{\x}}\z^\top B_{JJ}\z$ for $\z \in \mathbb{R}^{|J|}$.
By Lemma \ref{l:bound}, for each $J \supseteq J_{{\x}}$ with $|J|=r$, there exists $c_J>0$ such that $\|\nabla q_J(\z)\|^2 \geq c_J \, q_J(\z)$.
Note that $\{J: J \supseteq J_{{\x}} \text{ with } |J|=r\} \subseteq \mathcal{I}:=\{J: J \supseteq J_{\overline{\x}} \text{ with } |J|=r\}$ (as $\supp(\overline{\x})\subseteq \supp(\x)$) and $|\mathcal{I}|<+\infty$. So, $c:=\min_{J \in \mathcal{I}} c_J>0$. Noting from $\Phi(\x)-\Phi(\overline{\x})>0$, for each $J \supseteq J_{{\x}}$ with $|J|=r$, one has
\begin{align*}
\frac{q(\x_J)}{\x^\top B\x} &= \left[(\x_J)^\top A_{JJ}(\x_J)-\frac{\overline{\x}^\top A\overline{\x}}{\overline{\x}^\top B\overline{\x}}(\x_J)^\top B_{JJ}(\x_J)\right] \frac{1}{\x^\top B\x} \\
&= \left[\x^\top A\x - \frac{\overline{\x}^\top A\overline{\x}}{\overline{\x}^\top B\overline{\x}} \x^\top B\x \right] \frac{1}{\x^\top B\x} \\
&= \frac{\x^\top A\x}{\x^\top B\x}-\frac{\overline{\x}^\top A\overline{\x}}{\overline{\x}^\top B\overline{\x}}= \Phi(\x)-\Phi(\overline{\x})>0.
\end{align*}
Therefore, 
\begin{align*}
\dist(0, \partial_L \Phi(x))
&\geq \inf_{J \supseteq J_{\x}, |J|=r} \left\{\frac{\sqrt{c}q_J(x_J)^{1/2}}{\x^\top B\x} -2|\Phi(\x)-\Phi(\overline{\x})| \frac{\|B_{JJ}\x_J\|}{\x^\top B\x} \right\} \\
&= [\Phi(\x)-\Phi(\overline{\x})]^{1/2}\!\!\! \inf_{J \supseteq J_{\x}, |J|=r}\!\! \left\{\!\!\frac{c}{\sqrt{\x^\top B\x}} -2[\Phi(\x)-\Phi(\overline{\x})]^{1/2} \frac{\|B_{JJ}\x_J\|}{\x^\top B\x}\! \right\}.
\end{align*}
Following a similar line or arguments as in Theorem~\ref{t:KL_0norm_reg}, we get the desired conclusion.
\end{proof}

\begin{remark}[linear convergence of Algorithm~\ref{algo:epasa}]
\label{remark:linear_convergence}
In view of Theorem~\ref{t:global}, Theorem~\ref{t:KL_0norm_reg}, and Theorem~\ref{th:5.8}, we see that Algorithm~\ref{algo:epasa} exhibits linear convergence when applied to generalized eigenvalue problems with cardinality regularization and generalized eigenvalue problems with sparsity constraints.
\end{remark}

\section{Numerical examples}
\label{s:numerical}
In the section, we illustrate our proposed method via numerical examples. We first start with an explicit analytic example and use it to demonstrate the behavior of Algorithm~\ref{algo:epasa}  as well as the effect of the inertial parameters. Then, we examine the performance of the algorithm for the sparse eigenvalue optimization model. All the numerical tests are conducted on a computer with a 2.8 GHz Intel Core i7 and 8 GB RAM, equipped with MATLAB R2015a.

\subsection{Analytic examples}

Consider the problem
\begin{equation}\label{eq:ex}
\max_{\x \in \mathbb{R}^m} \left(m+1-\sum_{i=1}^m x_i\right)\prod_{i=1}^m x_i +\gamma \, \sum_{i=1}^m \frac{x_i+1}{x_i^2+2x_i+5} 
\quad\text{s.t.}\quad 0 \leq \x \leq 10,
\tag{EP}
\end{equation}
where $\gamma>0$. We first note that, for all $i \in \{1,\dots,m\}$, $x_i^2+2x_i+5=(x_i+1)^2+2^2 \geq 4(x_i+1)$ and that if $m+1-\sum_{i=1}^m x_i <0$, then $(m+1-\sum_{i=1}^m x_i)\prod_{i=1}^m x_i \leq 0$; otherwise, applying the Arithmetic Mean Geometric Mean (AM-GM) inequality to $(m+1)$ numbers $(m+1-\sum_{i=1}^m x_i)$, $x_1$, \dots, $x_m$ yields $(m+1-\sum_{i=1}^m x_i)\prod_{i=1}^m x_i \leq 1$. Direct verification shows that $\overline{\x}=(1,\dots,1)$ is the global solution of this problem. This example satisfies Assumption~\ref{a:standing} with $f_i(x_i) =\gamma(x_i+1)$, $g_i(x_i) =x_i^2+2x_i+5$, $\alpha_i=\frac{1}{4}$, and $\beta_i=2$ for all $i \in \{1,\dots,m\}$. Let $\gamma=10$, $\x_0=\x_{-1}$, $\delta=1$, and, for all $n \in \mathbb{N}$, $\nu_n=0$, $\tau_n =\delta+ \max_{1 \leq i \leq m}\{y_{i,n}\alpha_i+\frac{1}{2}y_{i,n}^2 \beta_i\} \leq \overline{\tau} :=\delta+ \max_{1 \leq i \leq m}\{\alpha_i \frac{\sqrt{M_i}}{m_i}+\frac{1}{2} \beta_i \frac{M_i}{m_i^2}\}$, where $M_i =\max_{0 \leq x_i \leq 10} f_i(x_i) =110$ and $m_i =\min_{0 \leq x_i \leq 10} g_i(x_i) =5$, $i \in \{1,\dots,m\}$. Then, for all $n \in \mathbb{N}$ and all $i \in \{1,\dots,m\}$, $z_{i,n} =x_{i,n} +\nu_n (x_{i,n}-x_{i,n-1}) =x_{i,n}$ and $w_{i,n} =\frac{\gamma(-x_{i,n}^2-2x_{i,n}+3)}{(x_{i,n}^2+2x_{i,n}+5)^2}$, and
\begin{align*}
x_{i,n+1} &=\argmax_{0\leq x_i\leq 10} \left\{ x_i\left(m+1 -x_i -s_{i,n}\right)p_{i,n} -\tau_n\left(x_i -z_{i,n}-\frac{1}{2 \tau_n}w_{i,n}\right)^2\right\} \\ 
&={\rm P}_{[0,10]}\left( \frac{2\tau_n\left(z_{i,n}+\frac{1}{2 \tau_n}w_{i,n}\right) +(m+1 -s_{i,n})p_{i,n}}{2\tau_n +2p_{i,n}} \right),
\end{align*}
where $s_{i,n} :=\sum_{j=1}^{i-1} x_{j,n+1} +\sum_{j=i+1}^m x_{j,n}$, $p_{i,n} :=\prod_{j=1}^{i-1} x_{j,n+1} \prod_{j=i+1}^m x_{j,n}$, and ${\rm P}_C$ denotes the Euclidean projection onto $C$. 

We randomly generate initial points in $[0,10]^m$ and perform Algorithm~\ref{algo:epasa}. For all the initial points, the algorithm produces a sequence $(\x_n)_{n\in \mathbb{N}}$ converging to the global maximizer. 
Figure~\ref{fig:Distance_Example1} depicts the convergence behavior for the case $m=2$ and $\gamma=10$, with  initial points $(0,0)$, $(0,1)$, $(1,0)$, and $(10,10)$ by plotting out the Euclidean distance to the solution $(1,1)$ per iteration.

\begin{figure}[htb!]
\begin{center}
 \includegraphics[width=0.78\columnwidth]{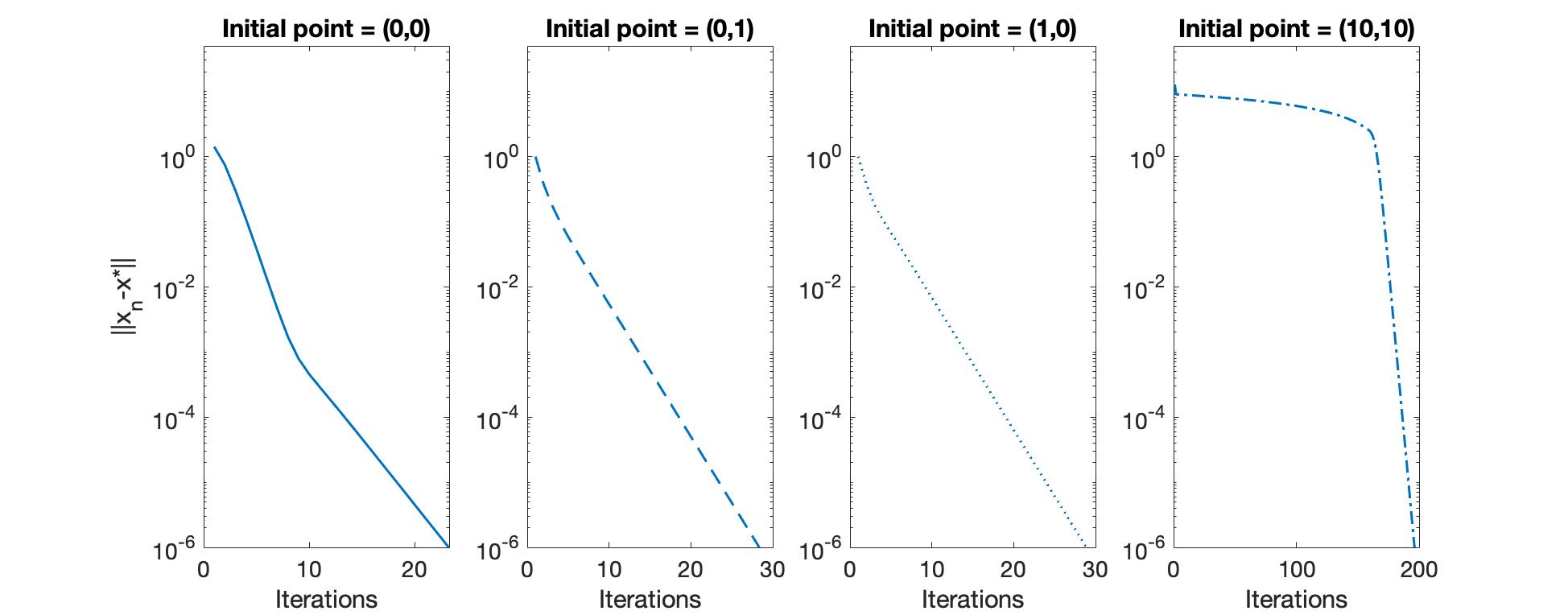}
 \end{center}
 \caption{Euclidean distance between the solution and the sequence generated by Algorithm~\ref{algo:epasa} for \eqref{eq:ex}}
 \label{fig:Distance_Example1}
\end{figure}

{\bf Effect of the inertial parameters.} We now illustrate the behavior of Algorithm~\ref{algo:epasa} by varying
the inertial parameters. To do this, we fix $m=2$ and $\gamma=10$ and an $\alpha \in (0,1)$. We set $\nu_n =\alpha \frac{\delta}{2\tau_n} <\frac{\delta}{2\tau_n}$. Starting with the initialization $\x_0=(10,10)$, we then run
Algorithm~\ref{algo:epasa} with different values for $\alpha\in [0, 1)$. Figure~\ref{fig:inertia_Example1} depicts the distance, in the log scale, between the sequence of iterates $(\x_n)_{n\in \mathbb{N}}$ and the solution $\overline{\x}=(1,1)$, for $\alpha \in \{0, 0.3, 0.6, 0.9\}$. As one can see from the figure, as $\alpha$ increases and approaches $1$, the algorithm
tends to converge faster.
\begin{figure}[htb!]
\begin{center}
 \includegraphics[width=0.48\columnwidth]{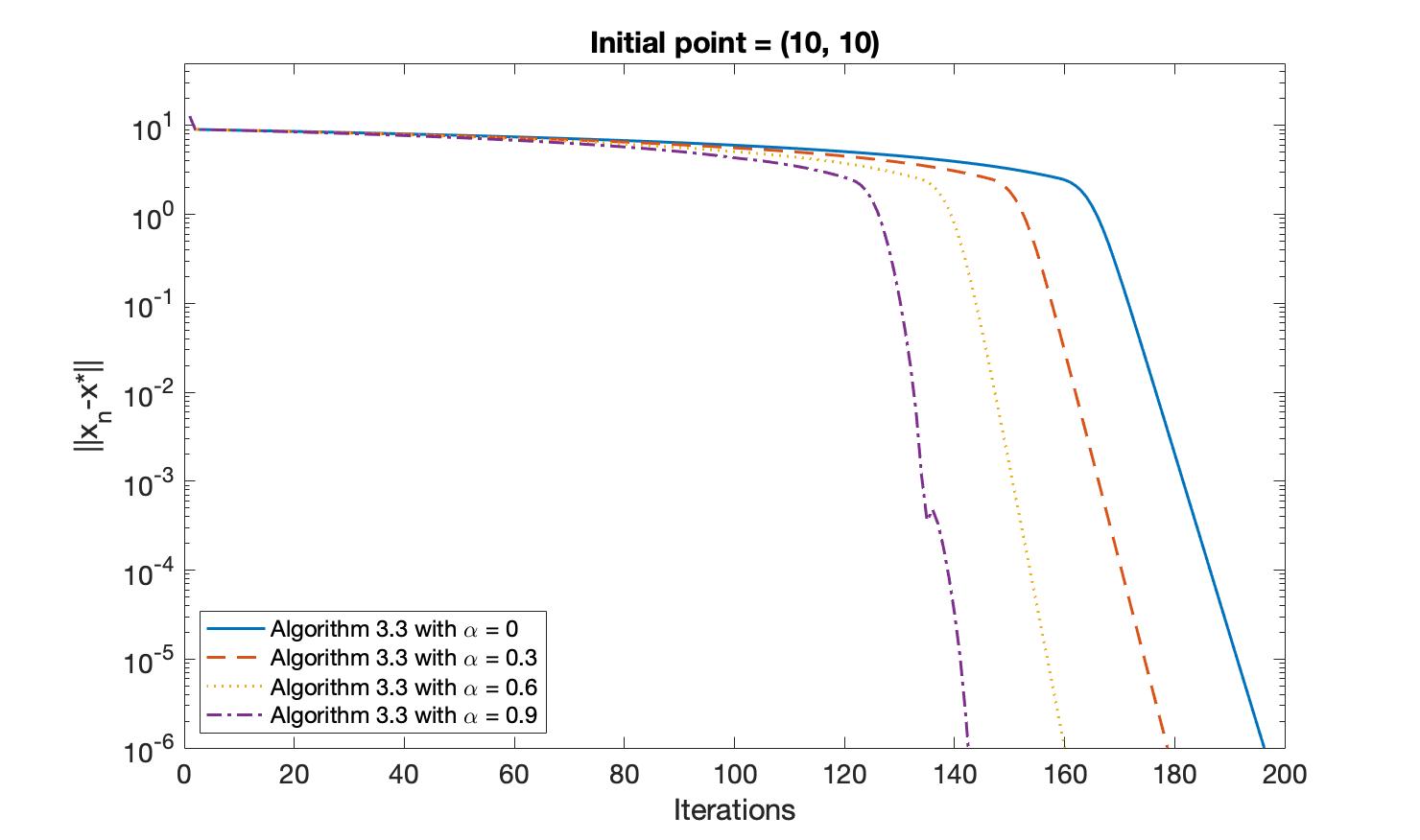}
 \end{center}
\caption{Illustration for different inertial parameters in solving \eqref{eq:ex} via Algorithm~\ref{algo:epasa}}
\label{fig:inertia_Example1}
\end{figure}

\subsection{Sparse generalized eigenvalue problems}

As another illustration of our algorithm, following~\cite{TWLZ18}, we consider a sparse generalized eigenvalue problem that arises from binary classification using sparse Fisher discriminant analysis. Consider $p$ observations $\z_1,\dots,\z_p$ with $\z_i \in \mathbb{R}^d$, $i\in \{1,\dots,p\}$, each of which belongs to one of two distinct classes. Let $I_k \subseteq \{1, \dots, p\}$ contain the indices of the
observations in class $k$, with $p_k =|I_k|$, $k=1,2,$ and $p_1+p_2=p$.
Let $\widehat{\bmu}_k =\frac{1}{p_k}\sum_{i \in I_k} \z_i$, for $k=1,2$. The so-called within-class and between-class covariance matrices are given by
\begin{equation*}
V_w =\frac{1}{p} \sum_{k=1}^2 \sum_{i \in I_k}(\z_i-\widehat{\bmu}_k)(\z_i-\widehat{\bmu}_k)^\top \text{ and } V_b =\frac{1}{p}\sum_{k=1}^2 p_k \widehat{\bmu}_k \widehat{\bmu}_k^\top.
\end{equation*}
The classification problem using sparse Fisher discriminant analysis (SFDA) then  seeks a low dimensional projection of the observations such that the between-class variance is large relative to the within-class variance. Mathematically, it solves
\begin{equation}\label{eq:SFDA}
\max_{\x \in \mathbb{R}^d} \frac{\x^\top V_b\x}{\x^\top V_w\x} -\lambda\phi(\x)
\quad\text{s.t.}\quad \|\x\|=1,
\tag{SFDA}
\end{equation}
where $\phi$ is a regularization function inducing sparsity, and $\lambda>0$.
This is a sparse generalized eigenvalue problems with $A=V_b$ and $B=V_w$. Here, we consider two specific sparse regularization functions: $\phi(\x)=\|\x\|_0$, and $\phi(\x)=\delta_{C_r}(\x)$ with $C_r =\{\x\in \mathbb{R}^d: \|\x\|_0 \leq r\}$ and $r>0$.

In the case where $\phi(\x)=\delta_{C_r}(\x)$, \cite{TWLZ18} proposed a truncated Rayleigh flow method (TRFM) for solving the above sparse generalized eigenvalue problem and showed the linear convergence of this method when the initial point $\x_0$ is close enough to a global solution. We note that, in general, it is hard to theoretically guarantee whether an initial point $\x_0$ is chosen to be close enough to a global solution, in order to ensure the convergence of the algorithm. On the other hand, Algorithm~\ref{algo:epasa} can be applied to \eqref{eq:SFDA} with both $\phi(\x)=\|\x\|_0$ and $\phi(\x)=\delta_{C_r}(\x)$, and Remark~\ref{remark:linear_convergence} shows that Algorithm~\ref{algo:epasa} converges linearly regardless of the choice of the initial points.

\subsubsection{Sparsity constrained case}
In this subsection, we consider the generalized eigenvalue problem with sparsity constraints, that is, \eqref{eq:SFDA} with $\phi(\x)=\delta_{C_r}(\x)$.
In this setting, Algorithm~\ref{algo:epasa} reads as
\begin{multline*}
\x_{n+1}\in {\rm P}_{\Lambda \cap C_r}\left(\z_n+ \frac{1}{2\tau_n} \frac{\x_n^\top V_b\x_n}{(\x_n^\top V_w\x_n)^2} \left[\frac{\x_n^\top V_w\x_n}{\x_n^\top V_b\x_n}V_b\x_n-V_w\x_n\right] \right) \\ \text{ with } \z_n=\x_n+\nu_n (\x_n-\x_{n-1}).
\end{multline*}
It is known that, for all $\ba =(a_1,\dots,a_d) \in \mathbb{R}^d$, $({\rm P}_{C_r}(\ba))_i =a_i$ for the $r$ largest components in absolute value of $\ba$, and $({\rm P}_{C_r}(\ba))_i =0$ otherwise. Then
\begin{equation*}
{\rm P}_{\Lambda \cap C_r}(\ba) =\begin{cases}
\left\{ \frac{\bv}{\|\bv\|}: \bv \in {\rm P}_{C_r}(\ba)\right\} & \text{if~} \ba \neq 0, \\
\Lambda \cap C_r & \text{if~} \ba=0.
\end{cases}
\end{equation*}
This can be seen, for example, by noting that ${\rm P}_{\Lambda \cap C_r}(\ba) =\argmin\{\frac{1}{2}\|\x-\ba\|^2: \x \in \Lambda \cap C_r\}=\argmin\{\langle \ba, \x\rangle: \x \in \Lambda \cap C_r\}$, and applying \cite[Proposition~13]{LT13}.

In our simulation, we adopt the same setting as in \cite{TWLZ18}: we set $\bmu_1 = {\bf 0}$, $\bmu_2=(\mu_{2,1},\dots,\mu_{2,d})^\top$ with  $\mu_{2,j} = 0.5$ for $j \in \{2,4,\dots,40\}$ and $\mu_{2,j} = 0$ otherwise. Let $\Sigma$ be a block diagonal covariance matrix with five blocks, each
of dimension $(d/5 \times d/5)$. The $(j,j')$-th element of each block takes the value $0.8^{|j-j'|}$. As explained in \cite{TWLZ18}, this covariance structure is intended to mimic the covariance structure of gene expression data. The observation data are simulated as $\z_i\sim N(\mu_k, \Sigma)$ for
$i \in I_k$, $k=1,2$.

We use our proposed inertial proximal subgradient method (Algorithm~\ref{algo:epasa}) and the truncated Rayleigh flow method (TRFM) for solving \eqref{eq:SFDA} with $\phi(\x)=\delta_{C_r}(\x)$, where we set $r=50$,   $p_1=p_2=500$, $p=p_1+p_2=1000$, and $d=2000$.

\begin{itemize}
    \item For Algorithm~\ref{algo:epasa}, we use the  initial point
 $\x_0=(\underbrace{{1}/{\sqrt{r}},\dots,{1}/{\sqrt{r}}}_r,0,\dots,0) \in \mathbb{R}^d$. Direct verification shows that Assumption~\ref{a:standing} is satisfied with $\alpha_1=0$ and $\beta_1=2\lambda_{\max}(V_w)$. So, by Remark \ref{remark:parameter},  we can set $\delta=1$, $\tau_n=1+\frac{\x_n^\top V_b\x_n}{(\x_n^\top V_w\x_n)^2} \lambda_{\max}(V_w)$,  $\overline{\nu}=0.4999<\frac{\delta}{2}$ and $\nu_n=\frac{\overline{\nu}}{\tau_n}$. We stop the algorithm when either the iterations reach the maximum iteration number $6000$ or the quantity $\|\x_{n+1}-\x_n\|$ is less than
$10^{-6}$.
\item For (TRFM), we use the same initial point $\x_0$ as in Algorithm~\ref{algo:epasa}. We  also use the same termination criteria as in Algorithm~\ref{algo:epasa}.
\end{itemize}

We run TRFM and Algorithm~\ref{algo:epasa} for 50 trials. Table~\ref{tab:computation} summarizes the output of the two methods by listing the average value for
\begin{enumerate}
\item
the objective value of the computed solution;
\item
the CPU time measured in seconds;
\item
the number of iterations used (round to the nearest integer).
\end{enumerate}

\begin{table}[!htb]
\centering
\caption{Computation results for \eqref{eq:SFDA} with sparsity constraint}
\label{tab:computation}
\begin{tabular}{l c c c} \hline
& Objective value of & & Number of \\
& computed solution & CPU time & iterations \\ \hline
TRFM & 12.2932 & 6.9976 & 1083  \\ 
Algorithm~\ref{algo:epasa} & 12.5461 & 4.8148 & 555 \\ \hline
\end{tabular}

\end{table}
From Table~\ref{tab:computation}, one can see that Algorithm~\ref{algo:epasa} is competitive with the TRFM method and produces a solution with better quality in terms
of the final objective value (note that \eqref{eq:SFDA} is a maximization problem). Moreover, Algorithm~\ref{algo:epasa} also uses less CPU time and number of iterations. As an illustration, we also plot  $\|\x_n-\x^*\|$ against the number of iterations $n$, in logarithmic scale,
where $\x^*$ is the  approximated solution produced by the corresponding algorithm. Figure~\ref{fig:vsTRFM} supports the theoretical finding that Algorithm~\ref{algo:epasa} exhibits linear convergence in this case.

\begin{figure}[htb!]
\begin{center}
\includegraphics[width=0.48\columnwidth]{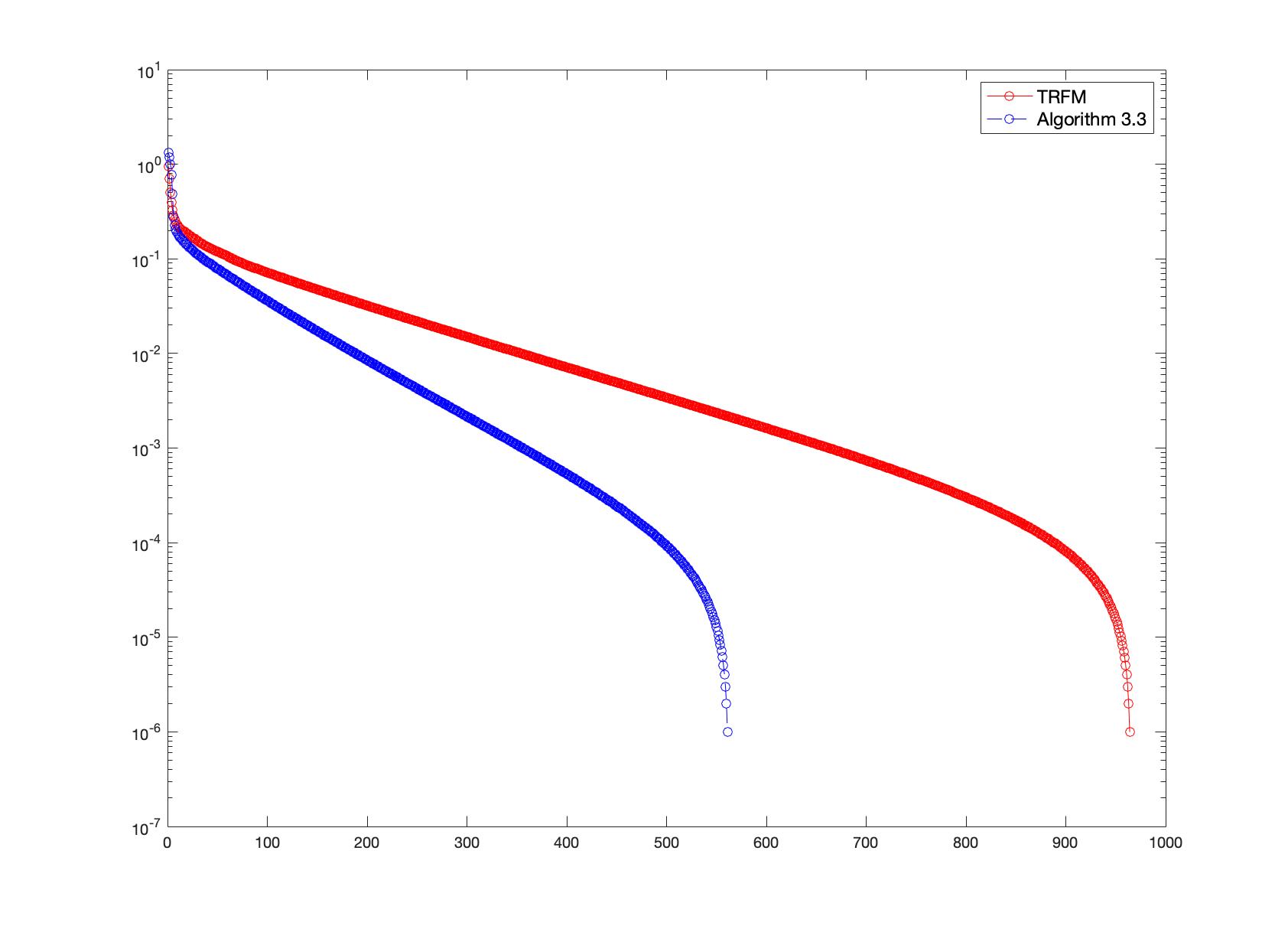}
\end{center}
\caption{Euclidean distance between $\x_n$ and $\x^*$ in every iteration}
\label{fig:vsTRFM}
\end{figure}

\subsubsection{Sparse generalized eigenvalue problem with cardinality regularization}
In this subsection, we consider the generalized eigenvalue problem with cardinality regularization, that is, \eqref{eq:SFDA} with $\phi(\x)= \|\x\|_0$.
In this setting, Algorithm~\ref{algo:epasa} reads as
\begin{align*}
\x_{n+1} &= \argmax_{\|\x\|=1} \left\{-\lambda \|\x\|_0 -\tau_n \left\|\x-\z_{n}-\frac{1}{2\tau_n} \bw_n \right\|^2 \right\} \\
&= \argmax_{\|\x\|=1} \left\{-\lambda \|\x\|_0 + \langle 2\tau_n\z_{n}+ \bw_n, \x\rangle \right\}
\end{align*}
with $\lambda>0$, $\z_n=\x_n+\nu_n (\x_n-\x_{n-1})$, and
\begin{equation*}
\bw_n =\frac{\x_n^\top V_b\x_n}{(\x_n^\top V_w\x_n)^2} \left[\frac{\x_n^\top V_w\x_n}{\x_n^\top V_b\x_n}V_b\x_n-V_w\x_n\right].
\end{equation*}
We note that, for each $\ba \in \mathbb{R}^d$, the optimization problem $\argmax_{\|\x\|=1} \{-\lambda \|\x\|_0 +\langle \ba, \x \rangle\}$ has a closed form solution \cite[Proposition~6]{SBP15}. In our numerical experiment, we set $\lambda =0.035$. We also  generate the data as in the previous subsection, using    the same initial point, parameters $\tau_n$, $\nu_n$ and $\delta$, and  employing the same termination criteria.

We run  Algorithm~\ref{algo:epasa} for 50 trials. Table~\ref{tab:computation1}  summarizes the output of the method where the meanings of the items are the same as in the previous subsection.
\begin{table}[!htb]
\centering
\caption{Computation results for \eqref{eq:SFDA} with cardinality regularization}\label{tab:computation1}
\begin{tabular}{c c c} \hline
Objective value of & & Number of \\
computed solution & CPU time & iterations\\ \hline
13.7196 & 3.1013 & 1074 \\ \hline
\end{tabular}
\end{table}

We also plot out Euclidean distance between $\x_n$ and $\x^*$ per iteration in log scale (Figure~\ref{fig:cadinality}), which supports the theoretical finding that Algorithm~\ref{algo:epasa} exhibits linear convergence for this problem.
\begin{figure}[htb!]

\begin{center}
\includegraphics[width=0.48\columnwidth]{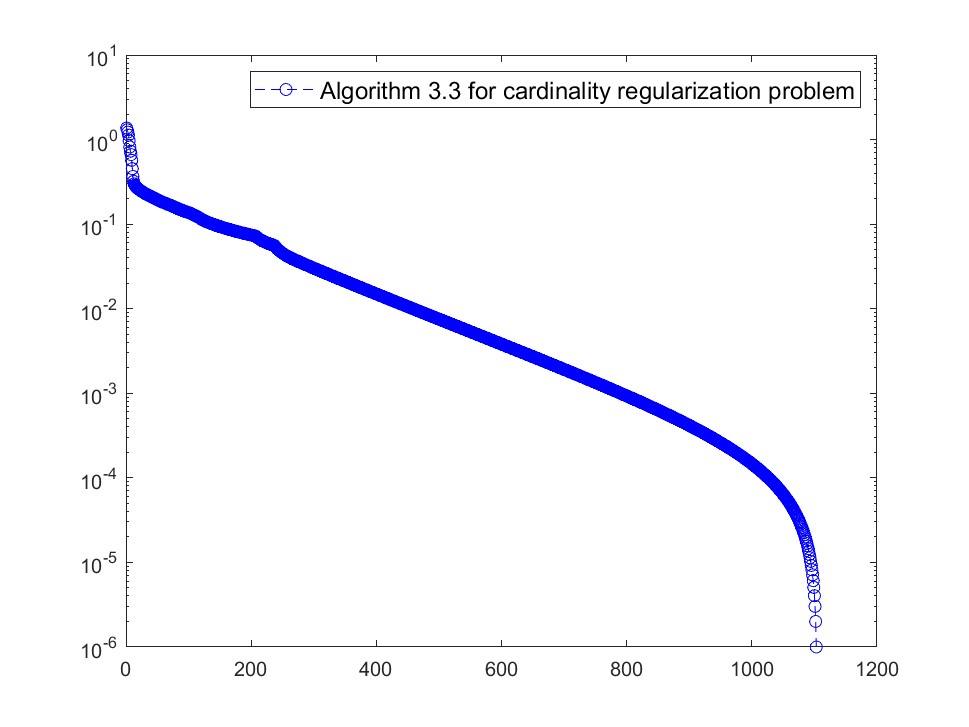}
\end{center}
\caption{Euclidean distance between $\x_n$ and $\x^*$ in every iteration}
\label{fig:cadinality}
\end{figure}

\appendix
\section{Proof of Lemmas~\ref{l:calrules}, \ref{l:subdiff}, \ref{l:stationary}, and \ref{l:equi}}
\label{s:appendix}

\begin{proof}[Proof of Lemma~\ref{l:calrules}]
\ref{l:calrules_separable}: This is given in \cite[Proposition 10.5]{RW98}.

\ref{l:calrules_sum}: This follows from \cite[Corollary~10.9]{RW98}.

\ref{l:calrules_sign}: This is an application of \cite[Corollary~3.4]{MNY06} with $\varphi_1 \equiv 0$ and $\varphi_2 =f$. 

\ref{l:calrules_quotient}: We first have from \cite[Proposition~1.111(ii)]{Mor06} and \ref{l:calrules_sum} that 
\begin{equation}\label{eq:quotient}
\partial_L\left(\frac{-f}{g}\right)(\x) =\frac{\partial_L(-g(\x)f +f(\x)g)(\x)}{g(\x)^2} \subseteq \frac{\partial_L(-g(\x)f)(\x) +\partial_L(f(\x)g)(\x)}{g(\x)^2}.
\end{equation}
Assume that $\widehat{\partial} f$ is nonempty-valued around $\x$. Then, if $g(\x) >0$, $\partial_L(-g(\x)f)(\x) =g(\x)\partial_L(-f)(\x) \subseteq -g(\x)\partial_L f(\x)$ due to \ref{l:calrules_sign}. If $g(\x) <0$, then $-g(\x) >0$ and $\partial_L(-g(\x)f)(\x) =-g(\x)\partial_L f(\x)$. Thus, we obtain the desired inclusion.

Now, assume that $f$ is strictly differentiable at $\x$. Then, by combining \eqref{eq:quotient} with the last assertion in \ref{l:calrules_sum}, $\partial_L\left(\frac{-f}{g}\right)(\x) =\frac{-g(\x)\nabla f(\x) +\partial_L(f(\x)g)(\x)}{g(\x)^2}$.
On the other hand, we have from \cite[Corollaries 1.12.2 and 1.14.2]{Kru03} that $\widehat{\partial}\left(\frac{-f}{g}\right)(\x) =\frac{-g(\x)\nabla f(\x) +\widehat{\partial}(f(\x)g)(\x)}{g(\x)^2}$.
The remaining conclusion follows from these two equalities.

\ref{l:calrules_sqrt}: The chain rule is given in \cite[Theorem~1.110(ii)]{Mor06}. The two square root rules follow by letting $\theta(t) =\sqrt{t}$ and $\theta(t) =-\sqrt{t}$, respectively.
\end{proof}

\begin{proof}[Proof of Lemma~\ref{l:subdiff}]
\ref{l:subdiff_l0}: The formula for Fr\'echet and limiting subdifferentials of $\|\cdot\|_0$ can be found in \cite[Section~3]{Le13}. The formula for the horizon subdifferential can be verified directly.

\ref{l:subdiff_sphere}: This follows by a direct verification.

\ref{l:subdiff_ball}: The limiting subdifferential formula for $\delta_{C_r}$ can be found in \cite[Theorem~3.9]{BLPW14}. The formula for the horizon subdifferential can be verified directly.

\ref{l:subdiff_sum}\&\ref{l:subdiff_sum'}: We deduce from \ref{l:subdiff_l0}, \ref{l:subdiff_sphere}, and \ref{l:subdiff_ball} that, for all $\x\in \Lambda$, $\|\cdot\|_0$ and $\delta_{\Lambda}$ are regular at $\x$ and $\left(-\partial_L^{\infty} (\|\cdot\|_0)(\x)\right) \cap \partial_L^{\infty} \delta_{\Lambda}(\x) =\{0\}$, and that, for all $\x\in \Lambda \cap C_r$, $\left(-\partial_L^{\infty} \delta_{C_r}(\x)\right) \cap \partial_L^{\infty} \delta_{\Lambda}(\x) =\{0\}$. The conclusions then follow from Lemma~\ref{l:calrules}\ref{l:calrules_sum}.
\end{proof}

\begin{proof}[Proof of Lemma~\ref{l:stationary}]
Let us first consider the case when $h$ is strictly differentiable at $\overline{\x}$. By Lemma~\ref{l:calrules}\ref{l:calrules_sum}, $\partial_L (-h+\delta_S)(\overline{x}) =-\nabla h(\overline{\x}) +\partial_L \delta_S(\overline{\x})$. Since $\delta_S(\x) =\delta_{S_1}(x_1) +\dots +\delta_{S_m}(x_m)$, we learn from Lemma~\ref{l:calrules}\ref{l:calrules_separable} that $\partial_L \delta_S(\overline{\x}) =\partial_L^{x_1} \delta_S(\overline{x})\times \dots \times \partial_L^{x_m} \delta_S(\overline{x})$, and so 
\begin{equation*}
\partial_L (-h+\delta_S)(\overline{x}) =\partial_L^{x_1}(-h+\delta_S)(\overline{x})\times \dots \times \partial_L^{x_m}(-h+\delta_S)(\overline{x}).    
\end{equation*}
This equality is obvious in the case when $m =1$. 
Next, since $F(\x) =h(\x) +\sum_{i=1}^m \frac{f_i(x_i)}{g_i(x_i)}$ with each $\frac{f_i}{g_i}$ Lipschitz continuous around $\overline{x}_i$, again using Lemma~\ref{l:calrules}\ref{l:calrules_separable}\&\ref{l:calrules_sum}, we have that 
\begin{align}\label{eq:009}
& \ \partial_L (-F+\delta_S)(\overline{\x}) \subseteq \partial_L (-h+\delta_S)(\overline{\x}) +\partial_L \left(-\sum_{i=1}^m \frac{f_i}{g_i}\right)(\overline{\x}) \\
=& \ \partial_L^{x_1}(-h+\delta_S)(\overline{\x})\times \dots \times \partial_L^{x_m}(-h+\delta_S)(\overline{\x}) +\partial_L\left(\frac{-f_1}{g_1}\right)(\overline{x}_1)\times \dots \times \partial_L\left(\frac{-f_m}{g_m}\right)(\overline{x}_1). \notag
\end{align}

\ref{l:stationary_imply}: Assume that, for each $i \in \{1,\dots,m\}$, $\widehat{\partial} f_i$ is nonempty-valued around $\overline{x}_i$. Then, by Lemma~\ref{l:calrules}\ref{l:calrules_quotient}, for each $i \in \{1,\dots,m\}$,
\begin{equation}\label{eq:00}
\partial_L \left(\frac{-f_i}{g_i}\right)(\overline{x}_i) \subseteq \frac{-g_i(\overline{x}_i)\partial_L f_i(\overline{x}_i) +f_i(\overline{x}_i)\partial_L g_i(\overline{x}_i)}{g_i(\overline{x}_i)^2}.
\end{equation}
In view of \eqref{eq:009} and \eqref{eq:00}, if $\overline{\x}$ is a stationary point for \eqref{eq:prob}, then it is a lifted coordinate-wise stationary point for \eqref{eq:prob}.

\ref{l:stationary_equi}: By Lemma~\ref{l:calrules}\ref{l:calrules_separable},\ref{l:calrules_sum}\&\ref{l:calrules_quotient}, the inclusions in \eqref{eq:009} and \eqref{eq:00} can be replaced by equalities. The conclusion then follows.
\end{proof}

\begin{proof}[Proof of Lemma~\ref{l:equi}]
For each $i\in \{1,\dots,m\}$, set $H_i(x_i,y_i) :=2y_i\sqrt{f_i(x_i)} -y_i^2g_i(x_i)$.

\ref{l:equi_global}: This follows from the observation that
\begin{equation*}
\max_{\y\in \mathbb{R}^m} H(\x,\y) =\sum_{i=1}^m \max_{y_i\in \mathbb{R}} H_i(x_i,y_i) =\sum_{i=1}^m H_i\left(x_i,\frac{\sqrt{f_i(x_i)}}{g_i(x_i)}\right) =\sum_{i=1}^m\frac{f_i(x_i)}{g_i(x_i)}.   
\end{equation*}

\ref{l:equi_local}: Assume that, for each $i \in \{1,\dots,m\}$, $\widehat{\partial} f_i$ is nonempty-valued around $\overline{x}_i$. Then, since $f_i(\overline{x}_i) >0$ and $\overline{y}_i\geq 0$, we have from Lemma~\ref{l:calrules}\ref{l:calrules_sum}, Lemma~\ref{l:calrules}\ref{l:calrules_sqrt}, and then Lemma~\ref{l:calrules}\ref{l:calrules_sign} that
\begin{align}\label{eq:use}
\partial_L^{x_i} (-H)(\overline{\x},\overline{\y}) &=\partial_L^{x_i} (-H_i)(\overline{x}_i,\overline{y}_i) \subseteq \frac{\overline{y}_i\, \partial_L (-f_i)(\overline{x}_i)}{\sqrt{f_i(\overline{x}_i)}}+\overline{y}_i^2 \, \partial_L g_i(\overline{x}_i) \notag \\
&\subseteq \frac{-\overline{y}_i\, \partial_L f_i(\overline{x}_i)}{\sqrt{f_i(\overline{x}_i)}}+\overline{y}_i^2 \, \partial_L g_i(\overline{x}_i) 
= \frac{-g_i (\overline{x})\, \partial_L f_i(\overline{x}_i)+f_i(\overline{x}_i) \partial_L g_i(\overline{x}_i) }{g_i(\overline{x}_i)^2}.
\end{align}
As a result, if $(\overline{\x},\overline{\y})$ is a lifted coordinate-wise stationary point for \eqref{eq:prob1}, then $\overline{\x}$ is a lifted coordinate-wise stationary point for $\eqref{eq:prob}$.

Now, assume that, for each $i \in \{1,\dots,m\}$, $f_i$ is strictly differentiable at $\overline{x}_i$. Then the inclusions in \eqref{eq:use} become equalities, and the conclusion follows.
\end{proof}

\section*{Acknowledgment} 
The authors would like to thank the anonymous referees for various constructive comments and suggestions that helped improve the manuscript. The authors are also grateful for Dr. Qia Li for the help and discussions for the applications of sparse Fisher discriminant analysis.

\end{document}